\begin{document}

\newcommand{\INVISIBLE}[1]{}

\newtheorem{thm}{Theorem}[section]
\newtheorem{lem}[thm]{Lemma}
\newtheorem{cor}[thm]{Corollary}
\newtheorem{prp}[thm]{Proposition}
\newtheorem{conj}[thm]{Conjecture}

\theoremstyle{definition}
\newtheorem{dfn}[thm]{Definition}
\newtheorem{question}[thm]{Question}
\newtheorem{nota}[thm]{Notations}
\newtheorem{notation}[thm]{Notation}
\newtheorem*{claim*}{Claim}
\newtheorem{ex}[thm]{Example}
\newtheorem{counterex}[thm]{Counter-example}
\newtheorem{rmk}[thm]{Remark}
\newtheorem{rmks}[thm]{Remarks}
 
\def\labelenumi{(\arabic{enumi})}

\newcommand{\aro}{\longrightarrow}
\newcommand{\arou}[1]{\stackrel{#1}{\longrightarrow}}
\newcommand{\RA}{\Longrightarrow}

\newcommand{\mm}[1]{\mathrm{#1}}
\newcommand{\bm}[1]{\boldsymbol{#1}}
\newcommand{\bb}[1]{\mathbf{#1}}

\newcommand{\bA}{\boldsymbol A}
\newcommand{\bB}{\boldsymbol B}
\newcommand{\bC}{\boldsymbol C}
\newcommand{\bD}{\boldsymbol D}
\newcommand{\bE}{\boldsymbol E}
\newcommand{\bF}{\boldsymbol F}
\newcommand{\bG}{\boldsymbol G}
\newcommand{\bH}{\boldsymbol H}
\newcommand{\bI}{\boldsymbol I}
\newcommand{\bJ}{\boldsymbol J}
\newcommand{\bK}{\boldsymbol K}
\newcommand{\bL}{\boldsymbol L}
\newcommand{\bM}{\boldsymbol M}
\newcommand{\bN}{\boldsymbol N}
\newcommand{\bO}{\boldsymbol O}
\newcommand{\bP}{\boldsymbol P}
\newcommand{\bY}{\boldsymbol Y}
\newcommand{\bS}{\boldsymbol S}
\newcommand{\bX}{\boldsymbol X}
\newcommand{\bZ}{\boldsymbol Z}

\newcommand{\cc}[1]{\mathcal{#1}}

\newcommand{\ca}{\cc{A}}

\newcommand{\cb}{\cc{B}}

\newcommand{\cC}{\cc{C}}

\newcommand{\cd}{\cc{D}}

\newcommand{\ce}{\cc{E}}

\newcommand{\cf}{\cc{F}}

\newcommand{\cg}{\cc{G}}

\newcommand{\ch}{\cc{H}}

\newcommand{\ci}{\cc{I}}

\newcommand{\cj}{\cc{J}}

\newcommand{\ck}{\cc{K}}

\newcommand{\cl}{\cc{L}}

\newcommand{\cm}{\cc{M}}

\newcommand{\cn}{\cc{N}}

\newcommand{\co}{\cc{O}}

\newcommand{\cp}{\cc{P}}

\newcommand{\cq}{\cc{Q}}

\newcommand{\cR}{\cc{R}}

\newcommand{\cs}{\cc{S}}

\newcommand{\ct}{\cc{T}}

\newcommand{\cu}{\cc{U}}

\newcommand{\cv}{\cc{V}}

\newcommand{\cy}{\cc{Y}}

\newcommand{\cw}{\cc{W}}

\newcommand{\cz}{\cc{Z}}

\newcommand{\cx}{\cc{X}}

\newcommand{\g}[1]{\mathfrak{#1}}

\newcommand{\af}{\mathds{A}}
\newcommand{\PP}{\mathds{P}}

\newcommand{\GL}{\mathrm{GL}}
\newcommand{\PGL}{\mathrm{PGL}}
\newcommand{\SL}{\mathrm{SL}}
\newcommand{\NN}{\mathbf{N}}
\newcommand{\ZZ}{\mathbf{Z}}
\newcommand{\CC}{\mathbf{C}}
\newcommand{\QQ}{\mathbf{Q}}
\newcommand{\RR}{\mathds{R}}
\newcommand{\FF}{\mathbf{F}}
\newcommand{\DD}{\mathbf{D}}
\newcommand{\VV}{\mathds{V}}
\newcommand{\HH}{\mathds{H}}
\newcommand{\MM}{\mathds{M}}
\newcommand{\OO}{\mathds{O}}
\newcommand{\LL}{\mathds L}
\newcommand{\BB}{\mathds B}
\newcommand{\kk}{\mathbf k}
\newcommand{\bs}{\mathbf S}
\newcommand{\GG}{\mathds G}
\newcommand{\XX}{\mathds X}
\newcommand{\EE}{\mathds E}

\newcommand{\WW}{\mathds W}
\newcommand{\al}{\alpha}

\newcommand{\be}{\beta}

\newcommand{\ga}{\gamma}
\newcommand{\Ga}{\Gamma}

\newcommand{\om}{\omega}
\newcommand{\Om}{\Omega}

\newcommand{\vt}{\vartheta}
\newcommand{\te}{\theta}
\newcommand{\Te}{\Theta}

\newcommand{\ph}{\varphi}
\newcommand{\Ph}{\Phi}

\newcommand{\ps}{\psi}
\newcommand{\Ps}{\Psi}

\newcommand{\ep}{\varepsilon}

\newcommand{\vr}{\varrho}

\newcommand{\de}{\delta}
\newcommand{\De}{\Delta}

\newcommand{\la}{\lambda}
\newcommand{\La}{\Lambda}

\newcommand{\ka}{\kappa}

\newcommand{\si}{\sigma}
\newcommand{\Si}{\Sigma}

\newcommand{\ze}{\zeta}

\newcommand{\fr}[2]{\frac{#1}{#2}}
\newcommand{\vs}{\vspace{0.3cm}}
\newcommand{\na}{\nabla}
\newcommand{\pd}{\partial}
\newcommand{\po}{\cdot}
\newcommand{\met}[2]{\left\langle #1, #2 \right\rangle}
\newcommand{\hh}[3]{\mathrm{Hom}_{#1}(#2,#3)}
\newcommand{\spc}{\mathrm{Spec}\,}
\newcommand{\an}{\mathrm{an}}
\newcommand{\red}{\mathrm{red}}
\newcommand{\NNo}{\NN\smallsetminus\{0\}}
\newcommand{\Fr}{\mathrm{Fr}}


\newcommand{\Alg}[1]{#1\text{-}\mathbf{Alg}}
\newcommand{\modules}[1]{#1\text{-}\mathbf{mod}}
\newcommand{\modulesu}[1]{#1\text{-}\mathbf{modu}}
\newcommand{\comodulesu}[1]{ \mathbf{ucomod}\text{-}#1}
\newcommand{\comodules}[1]{ \mathbf{comod}\text{-}#1}
\newcommand{\pichopf}{\mathbf{PICHpf}}
\newcommand{\vect}[1]{#1\text{-}\mathbf{vect}}
\newcommand{\Modules}[1]{#1\text{-}\mathbf{Mod}}
\newcommand{\dmod}[1]{\mathcal{D}(#1)\text{-}{\bf mod}}
\newcommand{\rep}[2]{\mathbf{Rep}_{#1}(#2)}
\newcommand{\set}{\mathbf{Set}}

\newcommand{\pos}[2]{#1\llbracket#2\rrbracket}

\newcommand{\lau}[2]{#1(\!(#2)\!)}

\newcommand{\cpos}[2]{#1\langle#2\rangle}

\newcommand{\id}{\mathrm{id}}

\newcommand{\one}{\mathds 1}

\newcommand{\ti}{\times}
\newcommand{\tiu}[1]{\underset{#1}{\times}}

\newcommand{\ot}{\otimes}
\newcommand{\otu}[1]{\underset{#1}{\otimes}}

\newcommand{\wh}{\widehat}
\newcommand{\wt}{\widetilde}
\newcommand{\ov}[1]{\overline{#1}}
\newcommand{\un}[1]{\underline{#1}}

\newcommand{\op}{\oplus}

\newcommand{\lid}{\varinjlim}
\newcommand{\lip}{\varprojlim}

\newcommand{\mcrs}{\bb{MC}_{\rm rs}}
\newcommand{\mclog}{\bb{MC}_{\rm log}}
\newcommand{\mc}{\mathbf{MC}}

\newcommand{\ega}[3]{[EGA $\mathrm{#1}_{\mathrm{#2}}$, #3]}

\newcommand{\asts}{\begin{center}$***$\end{center}}
\title[Singular varieties and non-commutative  Witt vectors]{Singular varieties and  infinitesimal  non-commutative Witt vectors}

\author[Hai]{Ph\`ung H\^o Hai}
\address[Hai]{Institute of Mathematics, Vietnam Academy of Science and Technology, 18 Hoang Quoc Viet, Cau Giay, Hanoi}
\email{phung@math.ac.vn}

\author[dos Santos]{Jo\~ao Pedro dos Santos}
\address[dos Santos]{Institut Montpelli\'erain A. Grothendieck,  
Case courrier 051,
Place Eug\`ene Bataillon,
34090 Montpellier, France }
\email{joao\_pedro.dos\_santos@yahoo.com}
\author[Thinh]{\DJ\`ao V\u{a}n Th\d inh}
\address[Thinh]{Institute of Mathematics, Vietnam Academy of Science and Technology, 18 Hoang Quoc Viet, Cau Giay, Hanoi}
\email{daothinh1812@gmail.com}

\begin{abstract}Given a projective variety $X$  over an algebraically closed field $k$, M. V. Nori introduced in \cite{nori76} a group scheme $\pi(X)$ which accounts for principal bundles $P\to X$ with finite structure,  obtaining in this way an amplification the etale fundamental group.  One   drawback  of this theory is that it is quite difficult to arrive at an explicit  description of $\pi(X)$, whenever it does not vanish altogether. To wit, there are no known non-trivial examples in the literature where  $\pi(X)$ is  local,  or local of some given height, etc. In this paper we obtain a    description of $\pi(X)$ through amalgamated products of certain non-commutative local group schemes  --- we called them infinitesimal non-commutative Witt group schemes --- in the case where $X$ is a non-normal variety  obtained by  pinching a simply connected one.
\end{abstract}

\keywords{Singular schemes, fundamental group schemes, Tannakian categories, essentially finite vector bundles, group schemes, Hopf algebras  (MSC 2020: 14F35, 14L05, 14L15, 14M99, 16T05, 16S10).}

\thanks{
The research of Ph\`ung H\^o Hai and \DJ\`ao V\u an Th\d inh is funded by the Vietnam Academy of Science and Technology under grant number CBCLCA.01/25-27. Parts of this project were carried out during P. H. Hai's several visits  to the International Centre for Theoretical Physics (ICTP):  he   thanks this institution  for its hospitality and support. 
}
\thanks{The research of \DJ\`ao V\u an Th\d inh was supported by the Postdoctoral program of Vietnam Institute for Advanced Study in Mathematics and by a    ``Poste Rouge'' of the CNRS   at the University of Montpellier.}
\date{20 June 2025}

\maketitle

\section{Introduction}\label{introduction}

In this work we are concerned with  finite group schemes which appear as structure groups of principal bundles over projective varieties in positive characteristic, that is to say, with the theory of the fundamental group scheme initiated by M. V. Nori \cite{nori82,nori76}. To begin our discussion, given a projective variety $X$ over an algebraically closed field $k$ of characteristic $p>0$, let us write $\pi(X,x_0)$ for the essentially finite fundamental group scheme of $X$ at     $x_0\in X(k)$ \cite[Section 3]{nori76}. 
One of the drawbacks of this object  
--- and for that matter also of its predecessor, the  etale fundamental group ---   is that a   {\it precise}  calculation turns out to be quite difficult. 
Indeed, although there are many instances where one can assure that  $\pi(X,x_0)$ is  {\it trivial} \cite{biswas09,biswas-holla07,mehta11,biswas-kumar-parameswaran24}, or at least {\it small} \cite{antei-biswas16,biswas-hai-dos_santos21}, it is far from being an easy task to find actual descriptions outside of these cases. For example, the present work shall give the {\it first}  known example  for which $\pi(X,x_0)$ is {\it certainly} local  and non-trivial, for which $\pi(X,x_0)$ is of height precisely $1$, etc.  
(If we abandon the requirement of projectivity  imposed on  $X$, it is worth noticing that describing $\pi(X,x_0)$ precisely seems somewhat pointless, and putting the question as an existence theorem  ``\`a la Abhyankar''  is  more reasonable \cite{otabe18,otabe-tonini-zhang22}.) 

One of the difficulties behind the determination of $\pi(X,x_0)$ is:   what we ultimately  want is to identify when $\pi(X,x_0)$  {\it already appears  somewhere else}. Now, prime examples of pro-finite group schemes are (those associated to) pro-finite groups  and  Frobenius kernels of linear algebraic groups. Many among these examples are applications of the following construction principle, which we now explain.   We start with a finite abstract group or  a restricted Lie algebra, we construct the group ring,      respectively the universal enveloping algebra, note that these are in fact cocommutative Hopf algebras 
 (see \cite[Example 2.7, p.62]{abe80} and 
 \cite[\S6]{milnor-moore65}), and apply the   {\it Hopf dual} functor $(-)^\circ$  \cite{abe80,sweedler69} to arrive at the ring of functions of an  group scheme. 
 Thus, we may enquire when $\co(\pi(X,x_0))\simeq H^\circ$, where $H$ is a ``known'' cocommutative Hopf algebra. 
This point of view has the advantage of being more economic and  simpler to apply to Tannakian categories;  it is easier  to deal with modules over an algebra (viz. $H$) than with  comodules over a coalgebra (viz. $\co(\pi(X,x_0))$). It also has the advantage of opening a door to  Dieudonn\'e's theory of formal groups \cite{dieudonne73}. 

As one learns from \cite{serre88}, \cite{brion15} and \cite[Expos\'e IX, \S5]{SGA1}, 
introducing non-normal points on a regular variety has the effect of producing certain {\it accessory groups} on the starting one.   More precisely, we assume   the existence of a smooth variety $Y$ and a finite morphism \[\nu:Y\aro X\]   
which allow us   to express sheaves on $X$ in terms of sheaves on $Y$ plus some extra data; here we have in mind  the process of pinching \cite{serre88,ferrand02}. Then, 
given a certain kind of group associated to a geometric object (see ahead), 
it is possible to describe the groups associated to $X$ in terms of those of $Y$ {\it plus}  certain {\it  accessory groups}. In the context of Jacobian varieties, the accessory groups are either powers of $\mathbf G_a$ or products of Witt groups \cite[V.13--16]{serre88}. In the theory of Picard schemes  \cite{brion15},  the accessory groups   are Weil restrictions, and  in \cite[IX.5]{SGA1} we encounter pro-finite  amalgamated products $\wh{\mathbf Z}*\cdots*\wh{\mathbf Z}$. It then becomes clear that for the case of  the Nori fundamental group scheme an {\it obviously  challenging part is  to find  analogues of these accessory groups}.  Indeed, in our setting, well-known results allow  us to rapidly transpose the geometry to a purely algebraic problem controlled by certain associative and cocommutative Hopf algebras $\bm H_D$  (see Section \ref{14.02.2025--1}). These Hopf algebras (or their Hopf dual)  play the role of the accessory groups, and the task is then to see if these objects already have a meaning somewhere else.

  This is one point which the present work  addresses by bringing to light the  ``non-commutative  infinitesimal Witt group schemes'' discovered independently by E. J. Ditters and K. Newman more than 50 years ago \cite{ditters69,newman74}. (Ditters draws on the outstanding series of papers Dieudonn\'e produced on formal groups in the 1950's and 60's, while Newman is influenced by Sweedler.) Unfortunately, although very interesting, these group schemes have received little attention in overview works. The   cocommutative  Hopf algebras discovered by these two mathematicians ---   {\it  non-commutative exponential group-coalgebras} in \cite{ditters69},   $\g P_n$ in \cite{newman74}, but $\cc{NW}_{n-1}$ in Definition \ref{nonCommuattiveWitt} --- are not  found in the textbook presentations of Hopf algebras      at our disposal and are only tangentially mentioned, in the context of formal groups, at the  end of  \cite[VI.38]{hazewinkel78}. 
  
  Once these ``non-commutative Witt group schemes'' are under the light, we see that 
they have a role similar to the one played by   Witt groups in the theory of the Jacobian   \cite[V.16]{serre88} and are the ``accessory groups'' we were looking for. Indeed, this is argued, in a  different setting, by  Ditters and  Newman \cite{ditters69,newman74}. In a nutshell, the ``excess'' appearing in the Jacobian mentioned by Serre (cf. V.13  and Proposition 9 of V.16 in \cite{serre88}) has a manifestation in the theory of the fundamental group scheme.

Since our objective is   to  describe as precisely as possible the group scheme  $\pi(X,x_0)$, we shall restrict attention to the case where {\it essentially finite vector bundles on $Y$} are rather simple,   i.e. they are always {\it trivial}. By doing so, the bulk of the work consists in understanding the accessory group schemes mentioned above. It is not excluded that a more encompassing picture can be reached by allowing a more general $Y$, but this seems more delicate (see e.g. Example \ref{05.03.2025--1})  and is to  be dealt with somewhere else.

Let us now review the remaining sections of the paper. 
Section \ref{subsidiary} has a three-fold function. Firstly, it surveys briefly the paper \cite{deninger-wibmer23} (see Section  \ref{categorical_fibre_products}) so that we have the theory of fibre-products of tensor categories and 
amalgamated products of group schemes right from the start. Secondly, it gathers, in  Section \ref{public_utility1},  material explaining how the ring of regular functions $\co(G')$ of a certain group scheme $G'$ {\it associated} to an affine  group scheme $G=\spc H^\circ$ is more accessible when written as a Hopf dual.  This part is   employed in Section \ref{26.02.2025--2}, but has some independent interest as well. Finally, in Section \ref{public_utility2}, we explain  the construction of the coproduct  in the category of Hopf algebras;
this construction is a key ingredient in \cite{newman74},  which is fundamental to Section \ref{07.05.2025--1}, but we were unable to find convenient      references in the literature. 

In Section \ref{19.02.2025--1}, we review briefly the process of pinching a closed finite subscheme   $D\to Y$ to a smaller finite scheme $C$ to obtain a variety $X=Y\sqcup_DC$. The main reference for the theme here is \cite{ferrand02}. This is used mostly to gather notations and terminology. 
 In Section \ref{vector_bundles_on_pinchde_schemes}
 we bring to light Milnor's theorem and its consequences in the description of vector bundles on the schemes previously obtained. We then introduce the categories $\ct_{D,C}$, one of the  main actors of the paper because of their accessibility, and explain the first decomposition results for  these (see Proposition \ref{14.02.2025--2}). 
 
 Section \ref{17.04.2025--1} continues with the breaking up of the categories $\ct_{D,C}$ into smaller pieces, but here the hypothesis are strengthened and  $C$ is taken to be {\it reduced}. In this section, the category $\cs_D$, which will produce  via Tannakian duality  the  
 first ``accessory'' group scheme $\Sigma_D$,  appears. (See Definition \ref{24.02.2025--1}.) 
 Now, if  in Section \ref{17.04.2025--1} the main categorical decompositions are obtained, it is in Section \ref{23.04.2025--1} that the group theoretical results  {\it and} the explicit calculations of $\pi(X)$  appear. In it, by introducing the hypothesis that $\pi(Y)=0$, we are capable of providing a decomposition of $\pi(X)$ 
in terms of the group schemes  $\Si_L$, where $L$ is a  connected component of $D$, see Theorem \ref{22.04.2025--1}.   
Although more can be said about these $\Sigma_L$, this section bears already some fruits: we obtain clear examples of schemes $X$ such that $\pi(X)$ is solely local and of a given finite height (see Proposition \ref{25.02.2025--2}). 
Let us mention that $\Sigma_L$ is the analogue of Serre's ``$V_{(n)}$'',  which plays a role in describing the Jacobian of a singular curve  \cite[V.13--16]{serre88}.

Section \ref{05.02.2025--1} reviews the works of Ditters and Newman and adapts it to our setting. It also provides the group schemes which can be read off from \cite{ditters69,newman74}
a more expressive  name, the non-commutative (infinitesimal) Witt group schemes $\boldsymbol{N\!W}_\ell$, see Definition \ref{nonCommuattiveWitt}. The justification for such a term is offered by Proposition  \ref{23.04.2025--3}.  
 
 Section \ref{14.02.2025--1} makes use of an important theorem of Newman in order to decompose the group scheme   $\Sigma_D$ of Definition \ref{24.02.2025--1}, when $D$ is connected,  into
 infinitesimal non-commutative Witt group schemes, see Corollary  \ref{23.04.2025--3}. It is a non-commutative  analogue of the well-known decomposition of the groups $V_{(n)}$ of \cite[V.13--16]{serre88} into Witt groups, cf. \cite[V.16]{serre88}.
 This then finishes our description of $\pi(X)$ (see Corollary \ref{23.04.2025--5}).

The reader will notice that the text contains many reviews and preliminary material. This is mainly due to the varied nature of the techniques ---  tensor categories, pinching of schemes, vector bundles and Hopf algebras ---  employed to obtain a definite result. Requiring the reader to feel 
confortable with all these branches in order to understand the calculation  of a fundamental group scheme seemed a poor decision. Hopefully this shall prove useful in appreciating  the points of view of other mathematicians.

\subsection*{Some notations and conventions}

\begin{enumerate}[(1)]

\item $k$ stands for an  algebraically closed  field of characteristic $p>0$, {\it except} in Section \ref{subsidiary}, where no assumptions are necessary. The category of finite dimensional vector spaces is denoted by 
 $\vect k$. The scheme  $\spc k$ is denoted by $\mathbf{pt}$. 
\item All rings are associative and unital. When discussing   algebraic geometric aspects, we shall also assume that the rings in question are commutative. 
\item Given a scheme $X$, a locally free $\co_X$-module of finite  rank is called a vector bundle. The category of vector bundles is denoted by $\bb{VB}(X)$.   
\item The Frobenius morphism of a scheme or of a ring is denoted by 
$\mm{Fr}$. 
\item Given an ideal $\g a$ in a commutative ring $A$ of characteristic $p>0$, we let $\g a^{[p^h]}$ denote the ideal generated by $\mm{Fr}^h(\g a)$. 
\item Given a vector space $V$ or a set   set $S$,  we shall denote the free $k$-algebra on $V$, respectively 
$S$, by $\bm T(V)$ or $k\{V\}$, respectively $k\{S\}$.    
\item Given an associative algebra $R$, respectively a coalgebra $L$, over $k$, we let $\modules R$, respectively $\comodules L$, stand for the category of left $R$-modules, respectively right $L$-comodules, which are {\it finite dimensional over $k$}.

\item The category of {\it Hopf agebras} over $k$ is denoted by $\bb{Hpf}$. That of cocommutative Hopf algebras is denoted by  $\mathbf{Hpf}^{\mathrm{coc}}$.  (This differs from the   tradition in the theory of formal groups, where this same category is denoted by   $\bb{GCog}$   \cite{dieudonne73}, or $\bb{GCoalg}$  \cite{ditters69}.)

\item  We let  $\bb{GS}$ stand for the category of affine group schemes over $k$. For brevity, objects in $\bb{GS}$ are referred to simply as {\it group schemes}. 

\item For a group scheme $G$, we shall make no distinction between representations of $G$ and right comodules over  $\co(G)$ \cite[Chapter 3]{waterhouse79}.

\item Given an abelian group $\La$, we let $\mm{Diag}(\La)$ stand for the diagonal group scheme constructed as in \cite[Part I, 2.5]{jantzen03}: its algebra of functions is the group algebra $k\La$. 

\item We follow the terminology of \cite[\S1]{deligne-milne82} concerning {\it tensor categories}. 

\item Given an integral and proper $k$-scheme $X$, we shall denote by $\bb{EF}(X)$ the category of essentially finite vector bundles on $X$.  Given a $k$-point $x_0$ of $X$, we let $\pi(X,x_0)$ stand for the group scheme obtained from $\bb{EF}(X)$ via the fibre functor $\bullet|_{x_0}:\bb{EF}(X)\to\vect k$. 
See \cite{nori76}. 
\end{enumerate}

\section{Subsidiary material}\label{subsidiary}
We remind the reader that in this section no assumption on the field  $k$ is to be imposed. 
\subsection{Fibre products and amalgamated products}\label{categorical_fibre_products}
Many of the arguments in this work  rely on the notion of fibre product of categories, which we recall briefly,   for the convenience of the reader, and   to establish notations.

Let $F:\ca\to\cs$, $G:\cb\to\cs$ be functors between categories and let \[\cp=\ca\ti_\cs\cb\] be the category whose objects are triples \[(a,b; \ga)\quad\text{ with $a\in\mm{Obj}\,\ca$, $b\in\mm{Obj}\,\cb$ and $\ga:Fa\stackrel\sim\to Gb$,}\]  and whose arrows are   defined in the obvious manner, cf.   \cite[1.6]{ferrand02} or \cite[1.1]{deninger-wibmer23}. Since the case where $\cs$ is the category $\vect k$ will be mostly used, we will adopt the abbreviation 
\[
\boxed{\ca\ti_k\cb:=\ca\ti_{\vect k}\cb.}
\]

When dealing with       tensor categories \cite[\S1]{deligne-milne82} $\ca$, $\cb$ and  $\cs$, {\it and} tensor functors $F$ and $G$, the category  $\cp$ can be endowed with a functor $\cp\ti\cp\to\cp$,    
 \[
 (A, B; c) \ot (A', B'; c') = (A\ot  A', B \ot B'; c \ot c'),
 \]
which then turns $\cp$ into a tensor category. 
See \cite[Lemma 1.5]{deninger-wibmer23}. In addition, rigidity is preserved in this construction,  as is ``abelianess'' \cite[Lemma 1.4]{deninger-wibmer23}, provided that $F$ and $G$ are exact. 

Taking  one   step further, let us suppose that $\ca$, $\cb$ and $\cs$ are {\it neutralised Tannakian categories}. This means that each one of them is an abelian, $k$-linear  tensor category \cite[1.15]{deligne-milne82}, which is rigid \cite[1.7]{deligne-milne82}, and that   there exist  tensor functors $\xi:\ca\to \vect k$, $\ze:\cb\to \vect k$ and $\eta:\cs\to\vect k$ which are faithful, $k$-linear and exact.  Suppose now that 
the functors $F$ and $G$ preserve these structures: there exist isomorphisms of tensor functors $\xi \stackrel\sim\Rightarrow\eta F$ and $\ze \stackrel\sim\Rightarrow\eta G$. Then $\cp$ is a $k$-linear abelian tensor category and comes with an exact and faithful tensor functor $\xi\ti\ze:\cp\to\vect k$ \cite[Lemma 1.8]{deninger-wibmer23}. All pieces are in place in order to apply the main existence theorem \cite[Theorem 2.11]{deligne-milne82}.

\begin{dfn}
Let $G$ and $H$ be   group schemes and  endow $\mathbf{Rep}_k(G)$ and $\mathbf{Rep}_k(H)$ with the forgetful functors $\om_G:\mathbf{Rep}_k(G)\to\vect k$ and $\om_H:\mathbf{Rep}_k(H)\to\vect k$. 
  We define \[G\star H\] as the   group scheme, associated via \cite[Theorem 2.11]{deligne-milne82}, to the couple consisting of the rigid abelian tensor category  \[\mathbf{Rep}_k(G)\ti_k\mathbf{Rep}_k(H)\] and the functor  \[\om_G\ti \om_H:\mathbf{Rep}_k(G)\ti_k\mathbf{Rep}_k(H)\aro \vect k.\]
\end{dfn}

From \cite[Corollary 1.12]{deninger-wibmer23}, it follows that $G\star H$ is the {\it co-product, in the category of group schemes $\bb{GS}$}, of $G$ and $H$. (The co-product will also be called the amalgamated product.)

\subsection{On the Hopf dual}\label{public_utility1}  
Let $R\in\mathbf{Hpf}^{\mathrm{coc}}$; the   Hopf algebra  $R^\circ$  \cite{abe80,sweedler69} is commutative and   $G:=\spc R^\circ$   then comes with the structure of  a group scheme. Let now $\bb A$ be a full subcategory of     $\bb{GS}$   for which it is possible to construct the {\it largest quotient  in $\bb A$} \cite[Section 2.2]{hai-dos_santos18}; let us agree to denote this quotient by   $G\to G^{\bb A}$. 
We may then enquire if $G^{\bb A}$ {\it can be described in terms of $R$ exclusively}. This 
is an useful  entreprise for the present work and, in addition,  has a certain public utility, so that   we have enough reason to include the following lines dealing with the case where $\bb A$ is either the category of   {\it abelian} or  {\it unipotent}  group schemes. It should be noticed that the specific case  of group algebras $R=k\Ga$,  where $\Ga$ is an abstract group and $\mm{char}\,k=0$, is much used in Quillen's theory of the Malcev completion, cf. \cite[Section 3]{hain93}. 

Let   $\bb{Ab}$, respectively $\bb {Un}$, be the categories of {\it abelian}, respectively {\it unipotent} \cite[Chapter 8]{waterhouse79},    group schemes. According to the general theories of    group-schemes and categories  (see \cite[Chapter 8]{waterhouse79} and  \cite[Section 2.2]{hai-dos_santos18}), the construction of the largest quotient is possible. 

Let $\ep:R\to k$ be the counit of $R$ and $\g a$ its kernel, the augmentation ideal.  Let $\modules R$ stand for the category of all left $R$-modules whose dimension as a  $k$-space  is finite; in $\modules R$ we have one preferred object, $\mathbf I:=R/\g a$, called the unit object. This terminology is   coherent because the fact that $R$ is a cocommutative Hopf algebra allows us to endow $\modules R$ with a tensor structure and, in this case, $\bb I$ is a unit object.  This is well explained in \cite[Proposition 1.1]{schneider95}.  
Similarly, let $\comodules {R^\circ}$ stand for the category of right $R^\circ$-comodules having finite dimension over $k$. Using   \cite[2.1.3]{montgomery93} and  \cite[1.6.4]{sweedler69},  the identity functor on $\vect k$ gives rise to an  isomorphism  of $k$-linear tensor categories 
\[\Te:
\modules R\arou\sim\comodules{R^\circ}=\mathbf{Rep}_k\,G. 
\]
In particular $\bb I$  corresponds to the trivial representation of  $G$.

Let \[R^\dagger=\{f\in R^\circ\,:\,\text{$f(\g a^n)=0$ for some $n\ge1$}\}\] and 
  $R^{\mathrm{ab}}=R/\g C$, where $\g C$ is the two-sided ideal generated by the commutators in $R$.

\begin{lem}\label{public_utility2}\begin{enumerate}[(1)]
\item The subspace $R^\dagger\subset R^\circ$ is a Hopf subalgebra. 
\item 
 The $k$-algebra $R^{\mm{ab}}$ has a unique structure of Hopf algebra such that $R\to R^{\mm{ab}}$ is a morphism of Hopf algebras.

\item  The natural morphism  $G=\spc R^\circ\to\spc R^{\mm{ab},\circ}$ is the largest abelian quotient of $G$.
\item The natural morphism  $G=\spc R^\circ\to\spc R'$ is the largest unipotent quotient of $G$.
\end{enumerate}
\end{lem}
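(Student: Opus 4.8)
The plan is to dispatch (1) and (2) as structural facts about $R$ and $R^\circ$, and then to read off (3) and (4) by combining the dictionary between faithfully flat quotients of $G=\spc R^\circ$ and Hopf subalgebras of $R^\circ$ with the tensor equivalence $\Te\colon\modules R\arou\sim\comodules{R^\circ}=\mathbf{Rep}_k\,G$. For (1) I would first record that $\g a=\ker\ep$ is a two-sided ideal (kernel of the algebra map $\ep$) and that $\Delta(\g a^n)\subseteq\sum_{i+j=n}\g a^i\ot\g a^j$, the latter coming from $\Delta(\g a)\subseteq\g a\ot R+R\ot\g a$ and multiplicativity of $\Delta$. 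Setting $R^\dagger_n=\{f\in R^\circ:f(\g a^n)=0\}$, so $R^\dagger=\bigcup_nR^\dagger_n$, this estimate gives $R^\dagger_n\ast R^\dagger_m\subseteq R^\dagger_{n+m-1}$ for the convolution product, with unit $\ep\in R^\dagger_1$; closure under the coproduct of $R^\circ$ follows because $\g a^n$ is two-sided, so writing $\Delta_{R^\circ}(f)=\sum g_i\ot h_i$ with the $h_i$ (resp. $g_i$) independent and evaluating $f(xy)=\sum g_i(x)h_i(y)$ on $x\in\g a^n$ forces $g_i,h_i\in R^\dagger_n$; closure under the antipode is immediate from $S(\g a^n)\subseteq\g a^n$. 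For (2) it suffices that $\g C$ be a Hopf ideal: $\ep(\g C)=0$ by multiplicativity of $\ep$, while $\Delta([x,y])=[x_{(1)},y_{(1)}]\ot x_{(2)}y_{(2)}+y_{(1)}x_{(1)}\ot[x_{(2)},y_{(2)}]\in\g C\ot R+R\ot\g C$ and $S([x,y])=[S y,S x]\in\g C$, and these extend to all of $\g C$ since $\g C\ot R+R\ot\g C$ is an ideal of $R\ot R$; the Hopf structure then descends uniquely to $R/\g C$.

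For (3), since $R^{\mm{ab}}$ is commutative by construction and cocommutative as a quotient of $R$, its dual $R^{\mm{ab},\circ}=\{f\in R^\circ:f(\g C)=0\}$ is a commutative and cocommutative Hopf subalgebra, so $\spc R^{\mm{ab},\circ}$ is an abelian quotient of $G$. The real point is maximality. An arbitrary abelian quotient corresponds to a cocommutative Hopf subalgebra $B\subseteq R^\circ$, and I would show $B\subseteq R^{\mm{ab},\circ}$, i.e. that every $f\in B$ kills $\g C$, by using that a Hopf subalgebra is stable under the translation operators $f\mapsto\big(z\mapsto f(azb)\big)=\sum f_{(1)}(a)f_{(3)}(b)\,f_{(2)}$; cocommutativity of $B$ then gives $f(a\,xy\,b)=f(a\,yx\,b)$, hence $f(a[x,y]b)=0$, and since such elements span $\g C$ we conclude $\spc R^{\mm{ab},\circ}=G^{\bb{Ab}}$.

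For (4), with $R'=R^\dagger$, I would transport everything through $\Te$. A comodule $V=\Te(M)$ lands in $\comodules{R^\dagger}$ iff its matrix coefficients vanish on some $\g a^n$, iff $\g a$ acts nilpotently on $M$, iff $M$ is a successive extension of $\bb I=R/\g a$; thus $\comodules{R^\dagger}$ is exactly the subcategory of unipotent representations of $G$, and in particular $\spc R^\dagger$ is unipotent. Maximality is the reverse inclusion: if $B\subseteq R^\circ$ is a Hopf subalgebra with $\spc B$ unipotent, each finite-dimensional subcomodule $W\subseteq B$ is a unipotent representation of $G$, hence an $R^\dagger$-comodule, and applying the counit to $\Delta_{R^\circ}(W)\subseteq W\ot R^\dagger$ gives $W\subseteq R^\dagger$; as every $f\in B$ lies in such a $W$, we obtain $B\subseteq R^\dagger$ and therefore $\spc R^\dagger=G^{\bb{Un}}$.

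I expect the main obstacle to be the two maximality claims in (3) and (4), and more precisely matching the abstract largest quotient $G\to G^{\bb A}$ of \cite[Section 2.2]{hai-dos_santos18} with the explicit Hopf-theoretic objects $R^{\mm{ab},\circ}$ and $R^\dagger$. The translation computation underlying (3) is the one genuinely delicate algebraic step; the remaining work is bookkeeping with $\Te$ and with the quotient--subalgebra correspondence.
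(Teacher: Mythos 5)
Your proposal is correct, and on the central part (4) it follows the same overall strategy as the paper --- identify the comodules over $R^\dagger$ with the modules killed by a power of $\g a$, hence with the unipotent representations of $G$ --- but it diverges from the paper's text in execution at several points, mostly by being more self-contained. For (1) you verify the Hopf-subalgebra property by hand (the estimate $R^\dagger_n * R^\dagger_m\subseteq R^\dagger_{n+m-1}$, the linear-independence trick for the coproduct, $S(\g a^n)\subseteq\g a^n$), where the paper simply cites Montgomery's Lemma 9.2.1; your computation is exactly the content of that citation. For (3), whose proof the paper declares ``simple and omitted,'' you actually supply one, and a clean one: stability of the subcoalgebra $B\subseteq R^\circ$ under $f\mapsto\sum f_{(1)}(a)f_{(3)}(b)f_{(2)}$ together with cocommutativity of $B$ forces $f(a[x,y]b)=0$, i.e.\ $B\subseteq R^{\mm{ab},\circ}$. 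For (4) the two arguments split: the paper proves that a module killed by $\g a^n$ has all Jordan--H\"older factors equal to $\bb I$ via a noncommutative prime-ideal argument (the annihilator of a simple factor is prime, contains $\g a^n$, hence equals $\g a$), and then gets maximality by Tannakian duality applied to an arbitrary morphism $G\to U$; you instead assert that equivalence and obtain maximality on the dual side, showing $B\subseteq R^\dagger$ for any Hopf subalgebra $B$ with $\spc B$ unipotent by the counit identity $w=(\ep\ot\id)\De(w)$ applied to finite-dimensional subcomodules $W\subseteq B$. Both routes are valid; yours trades the Tannakian formalism for the Hopf subalgebra--quotient dictionary, which is arguably more elementary, while the paper's version gives universality for all morphisms to unipotent groups at once rather than only for quotients (the two notions agree, since a closed subgroup of a unipotent group scheme is unipotent). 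The one step you leave unjustified is precisely where the paper invests its effort: that $\g a^nM=0$ forces all Jordan--H\"older factors to be $\bb I$. This is not a genuine gap --- the filtration $M\supseteq\g aM\supseteq\cdots\supseteq\g a^nM=0$ has graded pieces that are $R/\g a=k$-modules, hence direct sums of copies of $\bb I$, an argument even simpler than the paper's --- but it should be said. (Your reading of the statement's ``$\spc R'$'' as $\spc R^\dagger$ is indeed the intended one.)
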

\begin{proof}(1) This is   \cite[Lemma 9.2.1]{montgomery93}. 

(2) We only need to observe that $\g C$ is a Hopf ideal and apply  standard theory \cite[4.2.1]{abe80}. 

(4)
Let    $\modulesu R$ be the  full subcategory of    $\modules R$ consisting of those modules for which  Jordan-H\"older factors are all   isomorphic to $\mathbf I$. It is not difficult, using simple linear algebra, to see that under the isomorphism $\Te$ mentioned above,  the category  $\modulesu R$ corresponds to  the category of {\it unipotent} comodules $\comodulesu{R^\circ}$, that is, those $R^\circ$-comodules whose Jordan-H\"older factors are all isomorphic to $\one$. We contend that $M\in\modulesu R$ if and only if some power of $\g a$ annihilates $M$.  
Suppose then that $M\in\modulesu R$ and let $0=M_0\subset M_1\subset\ldots\subset M_n=M$ be a Jordan-H\"older filtration where  $M_i/M_{i-1}\simeq\bb I$. Then, for each $a\in\g a$, it follows that  $a\cdot M_i\subset M_{i-1}$,  and hence   $a_1\cdots a_n  M=0$ whenever  $a_1,\ldots,a_n\in \g a$, i.e. $\g a^nM=0$. 
Conversely, let   $M\in\modules R$ be   annihilated by    $\g a^n$. Then   $\g a^n$ annihilates any simple module $S$ appearing in a Jordan-H\"older filtration of $M$. Now,  $\g p:=\{r\in R\,:\,rS=0\}$  is a  two-sided  {\it prime ideal} \cite[3.12, p.56]{goodearl-warfield04}  containing $\g a^n$ and hence $\g p\supset \g a$. This implies that $\g p=\g a$ and $S\simeq \bb I$.

In conclusion, only the unit representation of  $G^\dagger:=\spc R^\dagger$ is  simple   and is hence $G^\dagger$ is unipotent. In addition, if $f:G\to U$ is a morphism to an unipotent group scheme $U$, then $\mathbf{Rep}_{k}\,U\to\mathbf{Rep}_{k}\,G\simeq \modules R$ must have its image in $\modulesu R$ and therefore,  
 Tannakian duality   leads to a factorization of $f$ into $G\to G^\dagger \to U$.  

(3) The proof is simple and omitted. 
\end{proof}

\subsection{Coproduct of Hopf algebras}\label{public_utility2} 
We review here the construction of the coproduct in the category of {\it Hopf algebras}; we begin with a revision of the coproduct of $k$-algebras. 

Let $A$ and $B$ be $k$-algebras (associative and  unital). From \cite[Section 1.4]{BMM96}, we know that the coproduct of $A$ and $B$ 
exists {\it in the category of $k$-algebras}. More precisely, there exists a $k$-algebra $P$ and morphisms $i:A\to P$, and $j:B\to P$ such that a diagram of full arrows as below can be completed to a commutative diagram by a {\it unique} dotted arrow:  
$$\xymatrix{
& R&\\ 
A\ar[r]_-{i}\ar[ru]^{\varphi}& P \ar@{-->}[u]& B. \ar[l]^-{j}
\ar[lu]_{\ps}
}$$
The construction of $P$ in \cite[Section 1.4]{BMM96} shows in particular that  $P$ is generated by $i(A)\cup j(B)$. By definition, $P$ is determined uniquely up to unique isomorphism.

We now suppose that $A$ and $B$ are {\it bialgebras}; let 
  $\De_A$ and $\ep_A$ be the comultiplication and counit, and adopt similar notations for $B$.  Then, the morphisms of $k$-algebras $(i\ot i)\circ\De_A$
and $(j\ot j)\circ\De_B$ give rise to a morphism of $k$-algebras $\De_P:P\to P\ot P$ such that \[\De_P\circ i=(i\ot i)\circ\De_A\quad\text{and}\quad\De_P\circ j=(j\ot j)\circ\De_B.\]
Analogously, we have   morphisms of $k$-algebras $\ep_P:P\to k$ such that  $\ep_P\circ i=\ep_A$  and $\ep_P\circ j=\ep_B$. 
Since $P$ is generated by $i(A)\cup j(B)$, a direct calculation shows that 
\[
(\De_P\ot\id_P)\circ\De_P=(\id_P\ot\De_P)\circ\De_P\quad\text{and}\quad(\ep_P\ot\id_P)\circ\De_P=\id_P=( \id_P\ot \ep_P )\circ\De_P.
\]
This gives $P$ the structure of a coalgebra, and since $\De_P$ and $\ep_P$ are morphisms of algebras, $P$ is then a {\it bialgebra}. In addition,  $i:A\to P$ and $j:B\to P$ are morphisms of bialgebras. 

The same method assures   that we can give a   step further. Assume that $A$ and $B$ are {\it Hopf algebras}, with antipodes  $S_A:A\to A$ and $S_B:B\to B$; they are in particular anti-morphisms of $k$-algebras. Then there exists an anti-morphism of $k$-algebras  $S_P:P\to P$ such that $S_P\circ i=S_A$ and $S_P\circ j=S_B$. Now, the two equations necessary to show that $S_P$ is an antipode for $P$ \cite[Exercises 1, p. 73]{sweedler69} are verified for elements in $i(A)\cup j(B)$, and it is a simple exercice to show that if these equations hold for $k$-algebra generators of $P$, then they hold for all elements. We conclude that $P$ comes with a natural structure of Hopf algebra. 

To end, we observe that the Hopf algebra $P$ is a coproduct {\it in the category of Hopf algebras} for $A$ and $B$; this can  again be checked using that  $i(A)\cup j(B)$
generates the $k$-algebra $P$.

\section{Pinching a modulus   }\label{19.02.2025--1}  In this section, we explain the construction of the main space studied in the paper. Notations shall also be employed in other sections. 

 Let $Y$ be an integral and  {\it projective} scheme over $k$. Inspired by  Serre \cite[III.1]{serre88}, we define   a {\it modulus} on $Y$  as   any closed immersion $\te:D\to Y$ of a {\it finite} $k$-scheme $D$. 
Let $C$ be another finite $k$-scheme and let $\mu:D\to C$ be a schematically dominant morphism, i.e. $\mu^\#:\co(C)\to\co(D)$ is injective. From \cite[Theorem 5.4]{ferrand02}, there exists a  scheme  $X$ (see Remark \ref{24.04.2025--1} below) and a {\it co-cartesian diagram}
\begin{equation}\label{the_diagram}
\xymatrix{
D \ar [d]_\mu \ar@{^{(}->}[rr]^\te   && Y   \ar [d]^\nu 
\\
C\ar@{^{(}->}[rr]_\si   && X;
}
\end{equation}
the scheme $X$ shall be referred to as {\it the pinching of $D$ to $C$ in $Y$}.  
 In this case, $\si$ is a closed immersion, $\nu$ is a finite morphism and 
\[\nu:Y\setminus D\aro X\setminus C\]
is an isomorphism (in particular $\nu$ is also surjective). As Ferrand explains on \cite[6.1]{ferrand02}, the scheme $X$ is proper over $k$ as well. The case when $C$ is a point appears in \cite[IV.4]{serre88}; it was recently employed in  \cite{das24}.

As we assume $Y$ to be projective,  the same can be said about $X$. (Here we rely on the finiteness of $D$, cf. \cite[Section 6]{ferrand02}.) Indeed, let $\cl$ be an ample invertible sheaf on $Y$. Since $D$ is finite, the locally free $\co_D$-module $\te^*\cl$ is {\it free} of rank one. Consequently, using the equivalence $\bb{VB}(X)\simeq \bb{VB}(C)\ti_{\bb{VB}(D)}\bb{VB}(Y)$ (this shall be explained   below), it follows that $\cl$ descends to an invertible sheaf on $X$, which must then be ample by the criterion of \cite[tag 0B5V]{stacks_project}. 

It is important to note that by allowing $D$, $C$ and $\mu$ to vary substantially, this construction can be used to obtain   projective varieties from their normalisations. Loosely speaking,  the above process can be reversed. Let $V$ be any projective variety over $k$ and  let $S$ be the closed subset of  non-normal points of $V$. Let $n:W\to V$ be its normalisation; $W$ is a normal and projective variety, $n$ is a finite morphism and $n:W\setminus n^{-1}(S)\to V\setminus S$ is an isomorphism.  Note that the coherent $\co_V$-module $n_*\co_W/\co_V$ is supported at $S$.
Let $\cC=\mm{Ann}_{\co_V}(n_*\co_W/\co_V)$ be the conductor; it is  an ideal of $\co_V$ whose image $n^\#\cC\subset n_*(\co_W)$ is also an ideal  and hence $\cC$ gives rise to a closed subscheme $F\subset W$ \ega{II}{}{1.4}, a closed subscheme $E\subset V$ and a schematically dominant morphism $F\to E$. More importantly, the following diagram of rings  on $V$ 
\[
\xymatrix{\co_V\ar@{->>}[d]\ar@{^{(}->}[r]^-{n^\#}&n_*(\co_W)\ar@{->>}[d]
\\
\co_V/\cC\ar@{^{(}->}[r]&n_*(\co_W)/n^\#\cC}
\]
is {\it cartesian} by the ``conductor square'' (see pp. 556-7 in \cite{ferrand02}). From this and  Scholium 4.3 of \cite{ferrand02}, it follows that \[\xymatrix{ \ar[d]F\ar@{^{(}->}[r]&W\ar[d]^n\\ E\ar@{^{(}->}[r]&V}\] is cocartesian in the category of ringed spaces and a fortiori in the category of schemes. Finally, it should be noticed that   $E\subset V$ is supported on $S$ but may well fail to be reduced. 

\begin{ex} Let        $X\subset\spc k[x,y]=\bb A^2$ be   the singular curve    cut out by   $y^2-x^5=0$.    Let $\mathbf A^1=\spc k[t]$ and define 
$\nu:\mathbf A^1 \to X$ by  $a\mapsto(a^2,a^5)$. 
This gives rise to  an isomorphism $\mathbf A^1\setminus\{0\}\stackrel\sim\to X\setminus\{0\}$.
Hence,  
\[
\co(X)\simeq    k[t^2,t^5] \simeq k[t]\underset { k[t]/(t^4)}{\times} k[t^2]/(t^4).
\]
So, $X$ is obtained from $\mathbf A^1$  by pinching the modulus $\spc k[t]/(t^4)$ to $\spc k[t^2]/(t^4)$. Clearly   the  conductor   is $(t^4)$.
\end{ex}

\begin{rmk}\label{24.04.2025--1}There is point in the phrasing of   \cite[Theorem 5.4]{ferrand02} which can lead to confusion: Ferrand first   constructs the coproduct    $Y\sqcup_DC$ \cite[Scholium 4.3]{ferrand02} in the category $\bb{RS}$ of ringed spaces and then shows that $Y\sqcup_DC$ is a scheme. Now, the category of schemes is {\it not} a full subcategory of $\bb{RS}$! Hence, the possibility that a morphism  $Y\sqcup_DC\to T$ in $\bb{RS}$ obtained from morphisms of {\it schemes} $Y\to T$ and $C\to T$ fails to be a morphism of   {\it locally } ringed spaces must be avoided. Luckily this is the case. 
\end{rmk}

\begin{rmk}[Rosenlicht-Serre curves]Let us add to the assumptions made on   $Y$ that it is regular and of dimension one, i.e. $Y$ is a  regular and proper  curve. Then, under the assumption that $C$ is reduced, the curve $X=Y\sqcup_DC$ is of a certain special kind. S. Das, in \cite[Definition 2.1]{das24}, calls these   ``Rosenlicht-Serre curves'' because of its use in \cite{serre88}.
\end{rmk}

\section{Vector bundles on pinched schemes}\label{vector_bundles_on_pinchde_schemes}
We shall keep the notations and setting introduced in section \ref{19.02.2025--1} and summarised in the following commutative diagram 
\[
\xymatrix{
D \ar [d]_\mu \ar@{^{(}->}[rr]^\te   && Y   \ar [d]^\nu 
\\
C\ar@{^{(}->}[rr]_\si   && X.
}
\]

\subsection{The category $\ct_{D,C}$ and Milnor's Theorem}
Theorem 2.2(iv) in  \cite{ferrand02}, due to Milnor,   allows us to describe    $\bb{VB}(X)$ from  $\bb{VB}(Y)$ and $\bb{VB}(D)$. To be more precise, we shall employ the categorical fibre product $\bb{VB}(C)\ti_{\bb{VB}(D)}\bb{VB}(Y)$ constructed via  the functors $\te^*:\bb{VB}(Y)\to\bb{VB}(D)$ and $\mu^*:\bb{VB}(C)\to\bb{VB}(D)$ (cf. Section \ref{categorical_fibre_products}).   
Milnor's Theorem can then be applied to show that the natural functor 
\[
\bb{VB}(X)\aro \bb{VB}(C)\ti_{\bb{VB}(D)}\bb{VB}(Y)
\]
described in terms of objects by 
\[\ce\longmapsto(\si^*\ce,\nu^*\ce ;   \mu^* \si^* \ce\arou{\text{can}}\te^*\nu^*\ce ),
\]
is an equivalence. 
(In \cite[Theorem 2.2(iv)]{ferrand02}, the author presents a proof in the case of affine schemes;  the transposition to the  general   case is simple  as $D$ lies in an affine subscheme of $Y$. See \cite[Theorem 3.13]{howe} for details.)

Since our objective is  to  describe $\pi(X,x_0)$ as precisely as possible --- and as explained in Section \ref{introduction}, little is known about $\pi(Y)$ in general ---    we shall concentrate not on $\bb{VB}(X)$ itself, but on the full subcategory  
\begin{equation}\label{22.04.2025--3} \bb{VB}(C) \tiu{\bb{VB}(D)}\bb{VB}^{\mathrm{tr}}(Y),\end{equation}
  of $\bb{VB}(C)\ti_{\bb{VB}(D)}\bb{VB}(Y)$ consisting of those objects   whose ``second  component'' is trivial. This is done with the intention of imposing $\pi(Y)=0$ further ahead (see Section \ref{24.04.2025--2}). 
  To ease notation,  we  make the following definition. 

\begin{dfn}Let   $\ct_{D,C}$ be the  category $\bb{VB}(C)\ti_{\bb{VB}(D)}\vect k$. That is, $\ct_{D,C}$ has as  
\begin{enumerate}\item[\it{objects}] those triples $(V,W;\be)$, where $V\in\bb{VB}(C)$, $W\in\vect k$, and $\be:W\ot_k\co_D\to \mu^*V$ is an isomorphism. And an 
\item[\it{arrow}] from $(V,W;\be)$ to $(V',W';\be')$ is a couple $(f,g)\in\mm{Hom}_{\co_C}(V,V')\ti\mm{Hom}(W,W')$   rendering commutative the diagram 
\[
\xymatrix{\mu^*V   \ar[rr]^{\mu^*f}&&\mu^*V'   
\\
W\ot_k \co_D  \ar[u]^{\be}\ar[rr]^{g\ot\id}&&\ar[u]_{\be'}W'\ot_k\co_D. }
\]
\end{enumerate}
\end{dfn}

The category $\ct_{D,C}$ comes with a $k$-bilinear functor \[\ot:\ct_{D,C}\times\ct_{D,C}\aro \ct_{D,C}\]
defined by
\[
(V,W;\be)\ot(V',W';\be')=(V\ot_{\co_C} V',W\ot_k W';\tau(\be,\be')), 
\]
where $\tau(\be,\be')$ renders commutative the obvious diagram. This turns $\ct_{C,D}$ into a tensor category. 
We let   
\begin{equation}\label{19.02.2025--2}
\om_{D,C}:\ct_{D,C}\aro\vect k, 
\end{equation}
be defined  on the level of objects by $(V,W;\be)\mapsto W$. Note that we have an equality $\ot_k\circ(\om_{D,C}\ti\om_{D,C})=\om_{D,C}\circ\ot$. 
 In what follows, the functor $\om_{D,C}$ shall be referred to as  {\it the  fibre functor} of $\ct_{D,C}$. 

Finally, we observe that 
\begin{equation}\label{22.04.2025--2}
\ct_{D,C}\aro \bb{VB}(C)\tiu{\bb{VB}(D)} \bb{VB}^{\mathrm{tr}}(Y),\qquad(V,W;\be)\longmapsto(V,W\ot_k\co_Y;\be^{-1})
\end{equation}
gives rise to an equivalence     since $H^0(Y,-):\bb {VB}^{\mathrm{tr}}(Y)\to\vect k$ is an equivalence.
In what follows, $\ct_{D,C}$ is the object of our studies.
Our method consists in breaking up $\ct_{D,C}$ into fibered products over $\vect k$  and we begin by the simplest one in Section \ref{14.02.2025--3} below. 

\subsection{The case where $C$ is a disjoint union}\label{14.02.2025--3}We now suppose that $C$ can be written as a disjoint union of two closed and open subschemes, that is, for $i=1,2$, we have closed immersions $\ph_i:C_i\to C$ such that $C_1\sqcup C_2\simeq C$. 
We adopt the notations implied in the following {\it cartesian diagram:}
\[\xymatrix{
D_i\ar[rr]^-{\ps_i}\ar[d]_{\mu_i}&&D\ar[d]^\mu
\\
C_i\ar[rr]_{\ph_i}&&C.
}\] 
We then define functors 
\[
F_i:\ct_{D,C}\aro\ct_{D_i,C_i}
\]
as follows. Let $(V,W;\be)\in\mathrm{Obj}\,\ct_{D,C}$ and write   
 $F_i(V,W;\be)=(\ph_i^*V,W;\be_i)$, where  $\be_i$ is obtained from the commutative diagram  
\[
\xymatrix{
\ps_i^*(W\ot_k\co_D)\ar[rr]^-{\ps_i^*(\be)}&&\ps_i^*\mu^*V\ar[r]^{\text{can.}}&\mu_i^*(\ph_i^*V)
\\
\ar[u]^{\text{can.}}\ar[rrru]_{\be_i}W\ot_k\co_{D_i}.&&&
}
\]
Note that the composition 
\[
\xymatrix{\ct_{D,C}\ar[r]^{F_i}&\ct_{D_i,C_i}\ar[rr]^-{\om_{D_i,C_i}}&&\vect k}
\]
is just $\om_{D,C}$. Using the notion of 2-commutative diagram (see \cite[Section 1]{deninger-wibmer23}) we then arrive at a functor   \[G:\ct_{D,C}\aro \ct_{D_1,C_1}\ti_k\ct_{D_2,C_2},\] 
through \cite[Lemma 1.3]{deninger-wibmer23}. 
Note that $G(V,W;\be)=((\ph_1^*V,W,\be_1),(\ph_2^*V,W,\be_2);\id_W)$.

\begin{prp}\label{14.02.2025--2}
The functor $G$ is an equivalence of categories. 
\end{prp}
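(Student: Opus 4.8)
The plan is to construct an explicit quasi-inverse $G'$ to $G$ and verify the two natural isomorphisms $G'\circ G\cong\id$ and $G\circ G'\cong\id$, rather than checking full faithfulness and essential surjectivity separately. Everything rests on one elementary observation: the decomposition $C=C_1\sqcup C_2$ induces an equivalence $\bb{VB}(C)\simeq\bb{VB}(C_1)\ti\bb{VB}(C_2)$ (a \emph{product}, not a fibre product, of categories), under which a bundle $V$ is recovered from the pair $(\ph_1^*V,\ph_2^*V)$; likewise $\bb{VB}(D)\simeq\bb{VB}(D_1)\ti\bb{VB}(D_2)$, and the cartesian squares relating $\mu$, $\mu_i$, $\ph_i$, $\ps_i$ make the pullbacks compatible with these splittings through the canonical identifications $\ps_i^*\mu^*\simeq\mu_i^*\ph_i^*$. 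The task is to transport this sheaf-theoretic splitting to the level of the triples defining $\ct_{D,C}$.

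First I would define $G':\ct_{D_1,C_1}\ti_k\ct_{D_2,C_2}\aro\ct_{D,C}$. On an object $((V_1,W_1;\be_1),(V_2,W_2;\be_2);\ga)$, where $\ga:W_1\arou\sim W_2$ is the gluing datum supplied by the fibre product, I would set $W:=W_1$, let $V$ be the bundle on $C$ corresponding to $(V_1,V_2)$, and define $\be:W\ot_k\co_D\to\mu^*V$ by prescribing its restriction to $D_1$ to be $\be_1$ and its restriction to $D_2$ to be $\be_2\circ(\ga\ot\id_{\co_{D_2}})$. The one genuinely non-formal point is that $\ga$ is used precisely to absorb the discrepancy between $W_1$ and $W_2$ into the single gluing isomorphism $\be$. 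On arrows, a pair of morphisms in the two factor categories satisfying the compatibility $g_2\circ\ga=\ga'\circ g_1$ with the glueings reassembles, via the product decomposition of the $\mm{Hom}$-sets over $C$, into a single arrow of $\ct_{D,C}$.

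Next I would exhibit the natural isomorphisms. For $G'\circ G\cong\id$ one uses that $G(V,W;\be)$ has both vector-space slots equal to $W$ with gluing $\id_W$, so reassembling the pair $(\ph_1^*V,\ph_2^*V)$ returns $V$ and reassembling $(\be_1,\be_2)$ returns $\be$; this last step is exactly the fact that an isomorphism over $D=D_1\sqcup D_2$ is determined by its two restrictions. For $G\circ G'\cong\id$, applying $G$ to the object produced by $G'$ yields $((V_1,W_1;\be_1),(V_2,W_1;\be_2\circ(\ga\ot\id));\id_{W_1})$, and I would compare this with the original datum by the isomorphism that is the identity on the first factor and $(\id_{V_2},\ga)$ on the second factor; a direct check shows $(\id_{V_2},\ga)$ is a morphism of $\ct_{D_2,C_2}$ and that it is compatible with the two glueings $\id_{W_1}$ and $\ga$. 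Naturality in both cases is immediate.

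The only obstacle I anticipate is bookkeeping: keeping straight the canonical isomorphisms $\ps_i^*\mu^*\simeq\mu_i^*\ph_i^*$ that are buried in the commutative triangle defining each $\be_i$, and confirming that the $\be$ produced by $G'$ is an isomorphism of $\co_D$-modules (which it is, being so over each of $D_1$ and $D_2$). Both are routine consequences of the cartesian squares and of $D=D_1\sqcup D_2$; no input beyond the product decomposition of coherent sheaves over a disjoint union is required.
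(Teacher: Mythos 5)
Your proposal is correct and rests on exactly the same two ingredients as the paper's proof: the gluing isomorphism $\ga$ is absorbed into the $D_2$-component of the descent datum (the paper phrases this as ``every object of $\ct_{D_1,C_1}\ti_k\ct_{D_2,C_2}$ is isomorphic to one whose gluing datum is $\id_{W_1}$''), and the product decompositions $\bb{VB}(C)\simeq\bb{VB}(C_1)\ti\bb{VB}(C_2)$, $\bb{VB}(D)\simeq\bb{VB}(D_1)\ti\bb{VB}(D_2)$ then reassemble the pieces. The only difference is packaging: the paper deduces essential surjectivity from this normalization and leaves full faithfulness implicit, whereas your explicit quasi-inverse $G'$ with the two natural isomorphisms verifies both at once, which is if anything slightly more complete.
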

\begin{proof}We make first a general remark.   Let $\bm V=(V,W;\be)\in\ct_{D,C}$ and   $a:W' \to W$ be an isomorphism. Define $\be':W'\ot_k\co_D\to \mu^*V$ as the composition
\[\xymatrix{
\co_D\ot W'\ar[rr]^{\id\ot a}&&\co_D\ot_kW\ar[r]^-{\be}&\mu^*V.
}\]
We have therefore a new object $\bm V'=(V,W';\be')$, and   $(\id_V,a):\bm V'\to \bm V$ is isomorphism. Consequently, in the isomorphism class of an object  \[\bm P=((V_1,W_1;\be_1),(V_2,W_2;\be_2)\,;\,a:W_1\stackrel\sim\to W_2)\] 
from $\ct_{D_1,C_1}\ti_k\ct_{D_2,C_2}$, we always can find and object of the   form \[\bm P'=((V_1,W_1;\be_1),(V_2,W_1;\be_2')\,;\,\id_{W_1}).\]
This observation, together with the fact that $(\ph_1^*,\ph_2^*):\bb {VB}(C)\to \bb{VB}(C_1)\ti\bb{VB}(C_2)$ and $(\ps_1^*,\ps_2^*):\bb {VB}(D)\to \bb{VB}(D_1)\ti\bb{VB}(D_2)$ are equivalences, proves that $G$ is essentially surjective. 
\end{proof}

\section{Description of the category $\ct_{D,C}$ when $C$ is reduced}\label{17.04.2025--1}

We shall keep the assumptions and notations of Section \ref{vector_bundles_on_pinchde_schemes}. 
We   assume further  that  $C$ is {\it reduced} and set out to describe in more    $\ct_{D,C}$. This restriction on $C$ is an important one, but   produces a   clear picture: we shall soon see that the basic building blocks of our analysis of $\ct_{D,C}$ are certain
categories $\cs_\bullet$ associated to connected components of $D$ (see Definition \ref{24.04.2025--3}). These simpler categories will then produce by Tannakian duality the group schemes  $\Sigma_\bullet$ (see Definition   \ref{24.02.2025--1}), which    appeared, in a different form, in        \cite{ditters69,ditters75,newman74}. The case when  $C$ is not reduced should be dealt with in   future work.

Let  $C=C_1\sqcup\cdots\sqcup C_m$, where   $C_i=\mathbf{pt}=\spc k$. As in Section \ref{14.02.2025--3}, let $D_i$ stand for the inverse image of $C_i$ inside $D$.  
From Proposition \ref{14.02.2025--2}, we conclude that 
\[
\ct_{D,C}\simeq \ct_{D_1,\mathbf{pt}}\ti_k\cdots\ti_k\ct_{D_m,\mathbf{pt}}.
\]
It is therefore necessary to pay attention to the case where  $C=\mathbf{pt}$. 

\subsection{Description of $\ct_{D,\mathbf{pt}}$ for   $D$  connected }

We assume in this section that $
D$ is connected, i.e. $\co(D)$ is a local ring, and let $s:\bb{pt}\to D$ be its unique point. 
Given an isomorphism $f:W\to W'$ of vector spaces, we have an isomorphism \begin{equation}\label{21.02.2025--1}
(\id_V,f):(V,W;\be)\arou\sim(V,W';\be\circ(f^{-1}\ot_k \id_{\co_D})  )
\end{equation}
and, in particular, 
each    $(V,W;\be)\in\mm{Obj}\,\ct_{D,\mathbf{pt}}$ is isomorphic to some $(V,V,\be')$ by letting $f=s^*(\be)$. This shows how to break up $\ct_{D,\mathbf{pt}}$ into smaller pieces with the help of:

\begin{dfn}\label{24.04.2025--3}   Define  $\cs_D^+$   as the category whose  
\begin{enumerate}
\item[\it objects]  are $(V,\be)$, with $V \in\vect k$ and $\be:V\ot \co_D \to V\ot\co_D$ an isomorphism,   and  
\item[\it arrows] from $(V ,\be)$ to $(V' ,\be')$ are maps $f\in  \mm{Hom}_k(V,V')$   rendering  
\[
\xymatrix{V\ot \co_D\ar[d]_\be\ar[rr]^{f\ot\id}&&V'\ot \co_D\ar[d]^{\be'}
\\
V\ot \co_D \ar[rr]^{f\ot \id}&&V'\ot \co_D }
\]
commutative. 
\end{enumerate}
Analogously, we define $\cs_D$ as being the full subcategory of $\cs_D^+$ whose objects are the couples $(V,\be)$ where, in addition, $s^*(\be) =\id_V$. 
\end{dfn}

Not surprisingly, 
$\cs_D^+$ and $\cs_D$ have structures of tensor  categories, where 
\[(V,\be)\ot(V',\be')=(V\ot_k V,\tau(\be,\be'))\]
and $\tau(\be,\be')$ renders commutative the diagram 
\[\xymatrix{
(V\ot_kV')\otu k\co_D\ar[d]_{\text{can.}} \ar[rr]^-{\tau(\be,\be')}&&(V\ot_kV')\otu k\co_D\ar[d]^{\text{can.}}
\\
(V\ot_k\co_D)\otu{\co_D}(V'\ot_k\co_D)\ar[rr]_{\be\ot\be'}&&(V\ot_k\co_D)\otu{\co_D}(V'\ot_k\co_D).
}\]
We remark that the forgetful  functor 
\begin{equation}\label{25.02.2025--3}
\sigma_D:\cs_D^+\aro\vect k
\end{equation} is a faithful and $k$-linear tensor functor. 
We shall soon see that   $\cs_D^+$ plays a supporting role with respect to $\cs_D$ (Proposition \ref{21.02.2025--4}).

There is an obvious functor 
\begin{equation}\label{18.02.2025--2}F:\cs_D \aro\ct_{D,\mathbf{pt}}\end{equation}
defined on the level of objects by    $(V,\be)\mapsto(V,V;\be)$ and on the level of arrows  by 
\[\mm{Hom}((V,\be),(V',\be'))\aro\mm{Hom}(F(V,\be),F(V',\be')),\qquad f\longmapsto(f,  f ).\]
From the isomorphism \eqref{21.02.2025--1} above, it follows that  $F$ is essentially surjective. A simple verification shows that $F$ is also fully faithful and hence the  proposition below holds true: 

\begin{prp}The  functor $F:\cs_D\to\ct_{D,\mathbf{pt}}$ of eq. \eqref{18.02.2025--2} is an equivalence of  $k$-linear categories. \qed
\end{prp}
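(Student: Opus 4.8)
The plan is to prove the two halves of the equivalence --- essential surjectivity and full faithfulness --- separately, the engine in both cases being the pullback $s^*$ along the unique point $s:\mathbf{pt}\to D$. Because $D$ is connected and finite over the algebraically closed field $k$, the ring $\co(D)$ is local Artinian with residue field $k$, so $s^*$ is a $k$-linear tensor functor sending $\co_D$ to $k$; under the identification $s^*(U\ot_k\co_D)=U$ it turns $g\ot\id_{\co_D}$ into $g$, and, being a pullback, it carries isomorphisms to isomorphisms. These are the only properties the argument uses.

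First I would address essential surjectivity. Given an arbitrary $(V,W;\be)\in\ct_{D,\mathbf{pt}}$, put $f:=s^*(\be):W\to V$, which is an isomorphism since $\be$ is one. Feeding this $f$ into the isomorphism \eqref{21.02.2025--1} yields $(V,W;\be)\arou\sim(V,V;\be')$ with $\be'=\be\circ(f^{-1}\ot_k\id_{\co_D})$, and a one-line computation gives $s^*(\be')=s^*(\be)\circ f^{-1}=\id_V$. Hence $(V,\be')\in\cs_D$ and $(V,W;\be)\simeq F(V,\be')$, so $F$ is essentially surjective.

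Full faithfulness is where the condition $s^*(\be)=\id$ defining $\cs_D$ does its work. Faithfulness is clear, since $f\mapsto(f,f)$ is injective on $\mm{Hom}$-sets. For fullness, take any arrow $(f,g):(V,V;\be)\to(V',V';\be')$ in $\ct_{D,\mathbf{pt}}$, so that $(f\ot\id)\circ\be=\be'\circ(g\ot\id)$. Applying $s^*$ to this identity and using $s^*(\be)=\id_V$ together with $s^*(\be')=\id_{V'}$ collapses the left-hand side to $f$ and the right-hand side to $g$, forcing $f=g$; the very same square then exhibits $f$ as a morphism of $\cs_D$, whence $(f,g)=F(f)$. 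I do not expect a genuine obstacle here: the whole point is that reducing the compatibility square at the closed point is precisely what identifies the two components of a $\ct_{D,\mathbf{pt}}$-morphism, and the single mildly technical ingredient is the bookkeeping that $s^*$ is a tensor functor with $s^*\co_D=k$.
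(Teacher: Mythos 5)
Your proposal is correct and follows the same route as the paper: essential surjectivity is obtained exactly as in the text, by feeding $f=s^*(\be)$ into the isomorphism \eqref{21.02.2025--1}, and your full-faithfulness argument (apply $s^*$ to the compatibility square, use $s^*(\be)=\id_V$ and $s^*(\be')=\id_{V'}$ to force $f=g$, then read the same square as the $\cs_D$-arrow condition) is precisely the ``simple verification'' the paper leaves to the reader. Nothing is missing.
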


We now  study  the relation between $\cs_D^+$ and  $\cs_D$.
Let    \[\ci:=\rep k\ZZ.\]   
For each $\bm V:=[(V,\be),(W,g);\ph]\in\cs_D\ti_k\ci$, we let 
\[
G\bm V =(V,\be  (\ph^{-1}   g\ph)_D )\in\mm{Obj}\,\cs_D^+; 
\]
here and below, we   abbreviate    $\xi\ot \id_{\co_D}$ to $\xi_D$.
Also, define for each arrow $(f,h)$ in $\cs_D\ti_k\ci$, the arrow $G(f,h)=f$. This gives rise to a functor $G:\cs_D\ti_k\ci\to\cs_D^+$.

\begin{prp}\label{21.02.2025--4}The  above defined    functor $G:\cs_D\ti_k\ci\to\cs_D^+$ is an equivalence of tensor categories.
\end{prp}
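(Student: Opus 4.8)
The plan is to exhibit an explicit quasi-inverse $H:\cs_D^+\to\cs_D\ti_k\ci$ and then to check that $G$ respects the tensor structures. First I would unravel the two factors. Recall that $\ci=\rep k\ZZ$ has for objects the pairs $(W,g)$ with $g\in\mm{Aut}(W)$ (the image of the generator $1\in\ZZ$), its fibre functor being $(W,g)\mapsto W$, while the fibre functor of $\cs_D$ is $\sigma_D:(V,\be)\mapsto V$. Hence an object of $\cs_D\ti_k\ci$ is a triple $[(V,\be),(W,g);\ph]$ with $\ph:V\stackrel\sim\to W$ a linear isomorphism, and a morphism is a pair $(a,b)$ of morphisms in $\cs_D$ and in $\ci$ subject to the fibre compatibility $\om(b)\circ\ph=\ph'\circ\sigma_D(a)$, where $\om:\ci\to\vect k$ is the forgetful functor.

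The key observation is that $G$ encodes a \emph{multiplicative decomposition} of an automorphism of $V\ot\co_D$ relative to the local ring $\co_D$: every $\gamma\in\mm{Aut}(V\ot\co_D)$ factors uniquely as $\gamma=\be\circ(g_0)_D$ with $g_0:=s^*(\gamma)$ its value at the closed point and $\be:=\gamma\circ(g_0^{-1})_D$ satisfying $s^*(\be)=\id_V$, so that $(V,\be)\in\cs_D$. Uniqueness is immediate, since applying $s^*$ to such a factorisation forces $g_0=s^*(\gamma)$. This suggests setting $H(V,\gamma)=[(V,\be),(V,g_0);\id_V]$ and $H(f)=(f,f)$ on arrows. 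I would verify directly that $GH=\id_{\cs_D^+}$ on the nose, and that there is a natural isomorphism $HG\Rightarrow\id$ whose component at $[(V,\be),(W,g);\ph]$ is the pair $(\id_V,\ph)$; this uses that $\ph:(V,\ph^{-1}g\ph)\to(W,g)$ is an isomorphism in $\ci$ and that the compatibility $\om(\ph)\circ\id_V=\ph\circ\sigma_D(\id_V)$ holds trivially. The step where care is needed is confirming that $H$ is well defined on morphisms: given $a:(V,\gamma)\to(V',\gamma')$ in $\cs_D^+$ one must check both that $a$ intertwines the $\be$'s in $\cs_D$ and that $a$ intertwines the $g_0$'s in $\ci$. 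Both follow by first applying $s^*$ to the relation $(a)_D\gamma=\gamma'(a)_D$, which yields the identity at the closed point $a\,g_0=g_0'\,a$, and then feeding this back into the original relation to peel off the $(g_0)_D$ factors.

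It remains to promote $G$ to a tensor equivalence. Since an equivalence of categories that is also a tensor functor is automatically a tensor equivalence, I only need to supply the structural isomorphisms making $G$ monoidal. Here I would exploit two elementary properties of the operator $\tau$ from Definition \ref{24.04.2025--3}: its multiplicativity $\tau(\be_1\be_2,\be_1'\be_2')=\tau(\be_1,\be_1')\,\tau(\be_2,\be_2')$, and the identity $\tau((\alpha)_D,(\alpha')_D)=(\alpha\ot\alpha')_D$ for linear automorphisms $\alpha,\alpha'$, both immediate from the defining diagram of $\tau$ under the canonical isomorphism $(V\ot_kV')\otu k\co_D\cong(V\ot_k\co_D)\otu{\co_D}(V'\ot_k\co_D)$. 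Writing out $G(\bm V\ot\bm V')$ and $G\bm V\ot G\bm V'$, and using $(\ph\ot\ph')^{-1}(g\ot g')(\ph\ot\ph')=(\ph^{-1}g\ph)\ot(\ph'^{-1}g'\ph')$, these two properties show that the two resulting automorphisms of $(V\ot V')\ot\co_D$ coincide; hence the comparison morphism can be taken to be the identity and $G$ is strictly monoidal. Likewise $G$ sends the unit $[(k,\id),(k,\id);\id]$ to the unit $(k,\id)$ of $\cs_D^+$, so the unit constraint is respected. I expect the only genuinely delicate bookkeeping to be the morphism-level verification for $H$ (equivalently, the full faithfulness of $G$), since there one must disentangle the single relation in $\cs_D^+$ into the separate conditions living in $\cs_D$ and in $\ci$; everything else is formal once the decomposition $\gamma=\be\circ(s^*\gamma)_D$ is in hand.
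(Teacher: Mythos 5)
Your proposal is correct and follows essentially the same route as the paper: the quasi-inverse you build from the factorisation $\gamma=\be\circ(s^*\gamma)_D$ is exactly the paper's essential-surjectivity construction, and your morphism-level check (apply $s^*$ to get $a\,g_0=g_0'\,a$, then feed back to peel off the $(g_0)_D$ factors) is the paper's full-faithfulness argument in different packaging. You in fact go further than the paper on one point: the paper omits the tensor-compatibility verification, whereas your multiplicativity of $\tau$ and the identity $\tau((\alpha)_D,(\alpha')_D)=(\alpha\ot\alpha')_D$ supply it, showing $G$ is strictly monoidal.
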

\begin{proof} This is a sequence of simple verifications. 
Let $(V,\ga)\in\mm{Obj}\,\cs_D^+$. Then, $(V,\ga  \circ(s^*(\ga)^{-1})_D )$ is an object of $\cs_D$ and $(V,s^*\ga)$ is an object of $\ci$. If we put  
\[\bm V:=
(V,\ga  (s^*(\ga)^{-1})_D\, ,\,(V,s^*\ga)\,;\,\id_V]\in\mathrm{Obj}\,\cs_D\ti_k\ci,\] 
then $G\bm V=(V,\ga)$,   and $G$ is essentially surjective. 

We now  check that $G$ is fully faithful.  Each $[(V,\be),(W,g);\ph]$ is isomorphic, via $(\id,\ph^{-1})$, to $[(V,\be),(V,\ph^{-1}g\ph);\id_V]$ and we may then work solely with  objects of this simplified kind.  
Let $\bm V=[(V,\be),(V,g);\id]$ and $\bm V'=[(V',\be'),(V',g');\id]$ be given  in $\cs_D\ti_k\ci$,  and let 
$\ph:V\to V'$ satisfy    
\begin{equation}\label{24.02.2025--3} \ph_D  \be  g_D=\be'  g_D' \ph_D;\end{equation}
i.e.  $\ph\in\mm{Hom}_{\cs_D^+}(G\bm V,G\bm V')$. Applying $s^*$ to eq. \eqref{24.02.2025--3}, we have  
\begin{equation}\label{24.02.2025--4}\ph g= g'\ph,\end{equation} i.e. $\ph\in\mm{Hom}_\ci((V,g),(V',g'))$. From eqs. \eqref{24.02.2025--3} and \eqref{24.02.2025--4}, we conclude  $\ph_D\be g_D=\be'\ph_Dg_D$ and thus   $\be'\ph_D=\ph_D\be $. This says that   $\ph\in\mm{Hom}_{\cs_D}( (V,\be), (V',\be'))$ and $(\ph,\ph)$ is an arrow $\bm V\to\bm V'$   inducing $\ph\in\hh{\cs_D^+}{G\bm V}{G\bm V'}$. 
Consequently,  $G_{\bm V,\bm V'}:\mm{Hom}(\bm V,\bm V')\to\mm{Hom}(G\bm V,G\bm V')$ is surjective. Injectivity is even simpler and we do not write it down. Nor shall we write down the verifications concerning the final statement, and we conclude here the proof. 
\end{proof}

\subsection{Determination of $\ct_{D,\mathbf{pt}}$ for arbitrary $D$}\label{21.02.2025--2}
 We now assume that $D=D_1\sqcup \cdots\sqcup D_m$, where each $D_i$ is connected and $m\ge2$. Let $\tilde D:=D_2\sqcup\cdots\sqcup D_m$ so that $D=D_1\sqcup \tilde D$. We identify $\bb{VB}(D)$ with $\bb{VB}(D_1)\ti\bb{VB}(\tilde D)$; in particular, maps between $\co_D$-modules shall be expressed as {\it pairs  of maps}. We then pick a point    $s:\mathbf{pt}\to \tilde D$  of $\tilde D$.
 Finally, let $d:D\to\bb{pt}$, $d_1:D_1\to\bb{pt}$ and $\tilde d:\tilde D\to\bb{pt}$ be the structural morphisms.
 
 The reader is asked to bear in mind that
$\ct_{\tilde D,\mathbf{pt}}$ and $\cs_{D_1}^+$  
  come equipped with functors to $\vect k$ (cf. eq. \eqref{19.02.2025--2} and eq. \eqref{25.02.2025--3}), which allow us to define $\cs^+_{D_1}\ti_k\ct_{\tilde D,\mathbf{pt}}$. 
    We set out to show that $\ct_{D,\bb{pt}}\simeq\cs^+_{D_1}\ti_k\ct_{\tilde D,\mathbf{pt}}$.

Given  an object  $[(U,\al),(V,W;\tilde \be);\ph]$ of $\cs^+_{D_1}\ti_k\ct_{\tilde D,\mathbf{pt}}$, we wish to associate to it an object of $\ct_{D,\mathbf{pt}}$.
Note that we are already in possession of an isomorphism $\tilde\be:\tilde d^*W\stackrel\sim\to \tilde d^*V$ and we need an isomorphism of $\co_{D_1}$-modules $\be_1:d_1^*W\stackrel\sim\to d_1^*V$. 
 Now, \[\xymatrix{U\ar[r]^{\ph}&W\ar[r]^{s^*(\tilde\be)}&V}\] is an isomorphism, and hence it is only natural that we define $\be_1$ by rendering 
\[\xymatrix{
d_1^*(U)\ar[d]_\al\ar[rr]^{d_1^*(\ph)}&&d_1^*(W)\ar[d]^{\be_1}
\\
d_1^*(U)\ar[rr]_{d_1^*(s^*(\tilde\be)\ph)}&&d_1^*(V)
}\]
commutative, i.e. \[\be_1=d_1^*(s^*(\tilde\be)\ph)\circ\al \circ d_1^*(\ph^{-1}).\] Therefore, for each $[(U,\al),(V,W;\tilde \be);\ph]$ we introduce the object \[H[(U,\al),(V,W;\tilde \be);\ph]=(V,W\,;\,d_1^*(s^*(\tilde\be)\ph)\circ\al\circ d_1^*(\ph^{-1}),\tilde\be)\] of $\ct_{D,\mathbf{pt}}$.   Also, given   any   arrow in $\cs_{D_1}^+\ti_k\ct_{\tilde D,\mathbf{pt}}$, 
\[
(f,g,h):[(U,\al),(V,W;\tilde\be);\ph]\aro[(U',\al'),(V',W';\tilde\be');\ph'], 
\]
an intricate  but straightforward computation shows that 
\[(g,h):(V,W;\be_1,\tilde\be) \aro (V',W';\be'_1,\tilde\be') \]
is an arrow in $\ct_{D,\mathbf{pt}}$ and in this way we arrive at a {\it faithful} and $k$-linear  functor \[H:\cs_{D_1}^+\ti_k\ct_{\tilde D,\mathbf{pt}}\aro\ct_{D,\mathbf{pt}}.\]

\begin{prp}\label{21.02.2025--3} The functor $H$ defined above is an equivalence of neutralized tensor categories. 
\end{prp}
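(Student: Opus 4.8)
The plan is to show that $H$ is fully faithful and essentially surjective, and then to check that it is compatible with the tensor constraints and with the fibre functors; since $H$ is already known to be faithful and $k$-linear, this will give the claimed equivalence of neutralized tensor categories. First I would dispose of essential surjectivity, which here is really surjectivity on objects. Given an object $(V,W;\be_1,\tilde\be)$ of $\ct_{D,\mathbf{pt}}$, I set $U=W$, $\ph=\id_W$, and $\al=d_1^*(s^*(\tilde\be)^{-1})\circ\be_1$; since $\be_1$ is an isomorphism $W\ot_k\co_{D_1}\to V\ot_k\co_{D_1}$ and $d_1^*(s^*(\tilde\be)^{-1})$ runs the other way, $\al$ is an automorphism of $W\ot_k\co_{D_1}$, so $[(W,\al),(V,W;\tilde\be);\id_W]$ is a genuine object of $\cs_{D_1}^+\ti_k\ct_{\tilde D,\mathbf{pt}}$. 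Unwinding the defining formula $\be_1=d_1^*(s^*(\tilde\be)\ph)\circ\al\circ d_1^*(\ph^{-1})$ with $\ph=\id_W$ returns exactly the given object. This construction is, up to isomorphism, a quasi-inverse of $H$ on objects, the $\tilde\be$-component being left untouched by $H$.

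The core of the argument is fullness. Let $\bm X=[(U,\al),(V,W;\tilde\be);\ph]$ and $\bm X'=[(U',\al'),(V',W';\tilde\be');\ph']$, and let $(g,h):H\bm X\to H\bm X'$ be an arrow of $\ct_{D,\mathbf{pt}}$; thus $(g,h)$ is compatible with $\tilde\be,\tilde\be'$ over $\tilde D$ and with $\be_1,\be_1'$ over $D_1$, the latter reading $\be_1'\circ d_1^*(h)=d_1^*(g)\circ\be_1$. The only candidate compatible with the gluing data is $f=\ph'^{-1}\circ h\circ\ph$, and the task is to verify that $f$ is an arrow $(U,\al)\to(U',\al')$ in $\cs_{D_1}^+$, i.e. that $\al'\circ d_1^*(f)=d_1^*(f)\circ\al$. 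For this I would restrict the $\tilde D$-compatibility of $(g,h)$ along the point $s$, which yields $g\circ s^*(\tilde\be)=s^*(\tilde\be')\circ h$, and then substitute the formulas for $\be_1$ and $\be_1'$ into $\be_1'\circ d_1^*(h)=d_1^*(g)\circ\be_1$. After using this last relation to replace $d_1^*(g)\circ d_1^*(s^*(\tilde\be))$ by $d_1^*(s^*(\tilde\be'))\circ d_1^*(h)$, cancelling the invertible factor $d_1^*(s^*(\tilde\be'))$, and conjugating by $d_1^*(\ph)$ and $d_1^*(\ph')$, the identity collapses precisely to $\al'\circ d_1^*(f)=d_1^*(f)\circ\al$. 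Hence $(f,g,h)$ is an arrow $\bm X\to\bm X'$ with $H(f,g,h)=(g,h)$, so $H$ is full. This bookkeeping with the two components of $\be$ and the restriction along $s$ is the main obstacle: it is purely formal, but it requires care to keep $s^*$, $d_1^*$ and the gluing isomorphisms straight.

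Finally I would verify the monoidal and neutralization compatibilities. Because the map defining $\be_1$ is assembled from $\al$, $\ph$ and $s^*(\tilde\be)$ by pullback along $d_1$ and composition, it behaves multiplicatively under the componentwise tensor product on $\cs_{D_1}^+\ti_k\ct_{\tilde D,\mathbf{pt}}$, so the canonical identifications $H\bm X\ot H\bm X'\cong H(\bm X\ot\bm X')$ assemble into a tensor structure on $H$; compatibility with the associativity and unit constraints is then routine. For the neutralizations, $\om_{D,\mathbf{pt}}\circ H$ sends $[(U,\al),(V,W;\tilde\be);\ph]$ to $W$, whereas the fibre functor $\sigma_{D_1}\ti\om_{\tilde D,\mathbf{pt}}$ of the fibre product returns $U$, and the gluing isomorphism $\ph:U\to W$ supplies a tensor-natural identification of the two. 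Together with the faithfulness already established, fullness and essential surjectivity make $H$ an equivalence, and the compatibilities just described upgrade it to an equivalence of neutralized tensor categories.
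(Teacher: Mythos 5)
Your proof is correct and follows essentially the same route as the paper's: the same explicit construction ($U=W$, $\ph=\id_W$, $\al=d_1^*(s^*(\tilde\be)^{-1})\circ\be_1$) for essential surjectivity, and the same fullness computation (restrict the $\tilde D$-compatibility along $s$, substitute into the $D_1$-compatibility, cancel the invertible factor). The only cosmetic differences are that the paper first normalizes objects so that the gluing isomorphism is the identity whereas you conjugate by $\ph$ and $\ph'$ directly, and that you spell out the tensor and fibre-functor compatibilities the paper leaves implicit.
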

\begin{proof}This is again a series of straightforward long verifications. {\it Essential surjectivity.} Let $\bm V=(V,W;\be_1,\tilde\be)\in\mm{Obj}\,\ct_{D,\mathbf{pt}}$ where, as before, $\be_1:d_1^*W\to d_1^*V$ and   $\tilde\be:\tilde d^*W\to\tilde d^*V$ are isomorphisms. Let $\bm V_1=(W,d_1^*s^*(\tilde\be^{-1}) \circ \be_1)\in\mm{Obj}\,\cs^+_{D_1}$ and  $\tilde{\bm V}=(V,W;\tilde\be)\in \mm{Obj}\,\ct_{\tilde D,\mathbf{pt}}$. Then \[H(\bm V_1,\tilde{\bm V};\id_W)=\bm V.\]

{\it Fully faithfulness.} Faithfulness has already been justified above. Each $\bm V\in\mm{Obj}\,\cs_{D_1}^+\ti_k\ct_{\tilde D,\mathbf{pt}}$ is isomorphic to an object of the form $[(W,\al),(V,W;\tilde\be);\id_W]$ so that in what follows we can restrict attention to these.  Let $\bm V=[(W,\al),(V,W;\tilde\be);\id_W]$ and $\bm V'=[(W',\al'),(V',W';\tilde\be');\id_{W'}]$ be objects of $\cs_{D_1}^+\ti_k\ct_{\tilde D,\mathbf{pt}}$ and let \[(g,h):H\bm V\aro H\bm V'\]
be an arrow of $\ct_{D,\mathbf{pt}}$. Hence,  \begin{equation}\label{03.03.2025--1}d_1^*(g)\circ d_1^*(s^*(\tilde\be))\circ\al=d_1^*(s^*(\tilde\be'))\circ\al'\circ d_1^*(h)\end{equation}  and \begin{equation}\label{03.03.2025--2}\tilde d^*(g)  \circ \tilde\be =\tilde\be'\circ\tilde d^*(h).\end{equation}From eq. \eqref{03.03.2025--2} we conclude that $g\circ s^*(\tilde\be)=s^*(\tilde\be')\circ h$, which together with eq. \eqref{03.03.2025--1} shows that $d_1^*(h)\circ\al=\al'\circ d_1^*(h)$. But this means that     $h:W\to W'$ defines an arrow of $\cs_{D_1}^+$ from $(W,\al)$ to $(W',\al')$ and we conclude that $(g,h)$ comes from an arrow $\bm V\to\bm V'$.

\end{proof}

Using Proposition \ref{21.02.2025--3} and then Proposition \ref{21.02.2025--4}, we have:

\begin{cor}\label{22.04.2025--4}If we decompose $D$ into connected components, $D=D_1\sqcup\cdots\sqcup D_m$,  then  the tensor category $\ct_{D,\mathbf{pt}}$ is equivalent to $\cs_{D_1}$ if $m=1$ and to 
\[
\left(\ci\ti_k\cs_{D_1}\right)\ti_k\cdots\ti_k\left(\ci\ti_k\cs_{D_{m-1}}\right)\ti_k\cs_{D_m} 
\] if $m\ge2$. 
In addition, the equivalences preserve  the canonical  functors to $\vect k$. \qed
\end{cor}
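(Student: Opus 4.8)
The plan is to induct on the number $m$ of connected components of $D$, chaining together the two preceding propositions.

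I would take as base case $m=1$. Here $D=D_1$ is connected, and the equivalence $F:\cs_{D_1}\to\ct_{D_1,\mathbf{pt}}$ of eq.~\eqref{18.02.2025--2} gives $\ct_{D,\mathbf{pt}}\simeq\cs_{D_1}$ directly. Since $F$ sends $(V,\be)$ to $(V,V;\be)$ while both $\si_{D_1}$ and $\om_{D_1,\mathbf{pt}}$ return the underlying vector space, this equivalence is compatible with the canonical functors to $\vect k$, which settles the $m=1$ case completely.

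For the inductive step I would assume $m\ge2$ and set $\tilde D=D_2\sqcup\cdots\sqcup D_m$, so that $D=D_1\sqcup\tilde D$ and $\tilde D$ has $m-1$ connected components. Proposition~\ref{21.02.2025--3} supplies an equivalence $\cs_{D_1}^+\ti_k\ct_{\tilde D,\mathbf{pt}}\arou\sim\ct_{D,\mathbf{pt}}$ of neutralized tensor categories, and Proposition~\ref{21.02.2025--4} rewrites the first factor as $\cs_{D_1}^+\simeq\cs_{D_1}\ti_k\ci\simeq\ci\ti_k\cs_{D_1}$, the final identification using the symmetry of the fibre product. Applying the inductive hypothesis to $\tilde D$ identifies $\ct_{\tilde D,\mathbf{pt}}$ with $\cs_{D_2}$ when $m=2$ and with $(\ci\ti_k\cs_{D_2})\ti_k\cdots\ti_k(\ci\ti_k\cs_{D_{m-1}})\ti_k\cs_{D_m}$ when $m\ge3$. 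Substituting this into the equivalence above and reassociating the fibre products yields the stated formula, the compatibility with the fibre functors being inherited at each stage because all three ingredient equivalences respect them.

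Since the substantive work is already packaged inside Propositions~\ref{21.02.2025--3} and~\ref{21.02.2025--4}, the one point needing care --- and the step I expect to be the main, if modest, obstacle --- is the purely formal claim that $\ti_k$ is associative and symmetric as a construction on neutralized tensor categories, compatibly with the forgetful functors to $\vect k$. I would verify this from the explicit models: both $(\ca\ti_k\cb)\ti_k\cC$ and $\ca\ti_k(\cb\ti_k\cC)$ are canonically equivalent to the category of quadruples $(a,b,c;\om a\cong\om b\cong\om c)$, whereas interchanging the two factors of a fibre product carries $(a,b;\ga)$ to $(b,a;\ga^{-1})$. In both cases the equivalence is the identity on underlying vector spaces and therefore commutes with projection to $\vect k$; with this bookkeeping the induction closes.
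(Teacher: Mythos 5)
Your proposal is correct and takes essentially the same route as the paper, whose entire proof is the phrase ``Using Proposition \ref{21.02.2025--3} and then Proposition \ref{21.02.2025--4}'' --- i.e.\ exactly the induction on $m$ you spell out, with the base case given by the equivalence $F$ of eq.~\eqref{18.02.2025--2}. Your extra bookkeeping (associativity and symmetry of $\ti_k$ compatibly with the functors to $\vect k$, and the compatibility of each ingredient equivalence with the fibre functors) merely makes explicit what the paper leaves implicit.
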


\section{ Description of $\pi(X)$ when $C$ is   reduced}
\label{23.04.2025--1}

 We  write the findings of  Section \ref{17.04.2025--1} in terms of group schemes in order to express $\pi(X)$
 as an amalgamated product. One of the pieces in this   product, the group scheme \[\Si_D\] obtained from $\cs_D$ when $D$ is connected,  turns out to be a fundamental block and will then be analysed in Section \ref{14.02.2025--1}.  
 
\subsection{Translation in terms of  Tannakian group schemes}
 We assume that $D$ is connected, except for Theorem \ref{22.04.2025--1} below.

\begin{dfn}\label{24.02.2025--1}We let $\Sigma_D$ be  the group scheme obtained from $\cs_D$ through the fibre functor $\si_D:\cs_D\to\vect k$ and Tannakian duality \cite[Theorem 2.11]{deligne-milne82}. Similarly, $\Si^+_D$ is the   group scheme obtained from $\cs_D^+$. 
\end{dfn}

 Let $\bb Z^{\rm alg}$ be the  group scheme associated to $\ci=\rep k\ZZ$ via the forgetful functor. Letting $\ZZ_p$ be the pro-finite group scheme $\lip_\ell\ZZ/p^\ell$, it is not difficult to show that $\bb Z^{\rm alg}=\mm{Diag}(k^*)\times\ZZ_p$. 
 More precisely, denote by $[m]:k^*\to k^*$ the map $a\mapsto a^m$. Then, under the identification $\mm{Diag}(k^*)(k)=\hh{\bb{Grp}}{k^*}{k^*}$ \cite[Part I, 2.5]{jantzen03}, it follows that the morphism of groups \[\ZZ\aro\mm{Diag}(k^*)(k)\times\ZZ_p,\qquad m\longmapsto([m],m)\]
induces an equivalence between representation categories. (It should be noted that $\ZZ^{\mathrm{alg}}$ is not necessarily pro-finite.)
From \cite[Corollary 1.11]{deninger-wibmer23} and Proposition \ref{21.02.2025--4} we conclude that 
\begin{equation}\label{25.02.2025--4}\Sigma_D^+
 \simeq \Sigma_D\star\left(\mm{Diag}(k^*)\times\ZZ_p\right).\end{equation}
 
Let us note the following result, whose proof shall be given in Section \ref{21.02.2025--7}. (It rests on the fact that $\co(\Si_D)$ can be very easily described as a Hopf dual.) 
The reader is asked to bear in mind our blanket assumption that $D$ is connected.

\begin{prp}\label{25.02.2025--2}The group scheme $\Sigma_D$ is local.   In fact, let $\g m$ stand for the maximal ideal of $\co(D)$ and let $h\ge1$ be such that $\g m^{[p^h]}=0$ while $\g m^{[p^{h-1}]}\not=0$. Then $\Sigma_D$ is annihilated by $\mm{Fr}^h$ but not by $\mm{Fr}^{h-1}$, i.e. is of height $h$ \cite[II.7.1.4--6]{demazure-gabriel70}.  \qed
\end{prp}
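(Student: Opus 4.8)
The plan is to pin down $\co(\Si_D)$ explicitly as a Hopf dual and then read off both assertions directly from the finite local algebra $\co_D$.

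First I would identify the Hopf algebra behind $\cs_D$. Since $\g m$ is nilpotent, every $\be$ occurring in an object $(V,\be)$ of $\cs_D$ has the form $\id_V+\xi$ with $\xi\in\mm{End}_k(V)\ot\g m$, and such a $\xi$ is the same datum as a $k$-linear map $\g m^*\to\mm{End}_k(V)$, where $\g m^*=\mm{Hom}_k(\g m,k)$; that is, as a module structure on $V$ over the free (tensor) algebra $R:=\bm T(\g m^*)$. A short verification shows that this matching is an equivalence of $k$-linear categories $\cs_D\simeq\modules R$ carrying $\si_D$ to the forgetful functor, and that the tensor product of $\cs_D$ transports to the cocommutative Hopf structure on $R$ whose coproduct on generators is
\[
\Delta(\phi)=\phi\ot1+1\ot\phi+\delta(\phi),\qquad \phi\in\g m^*,
\]
with $\delta:\g m^*\to\g m^*\ot\g m^*$ the transpose of the multiplication $\g m\ot\g m\to\g m$ (cocommutativity being exactly the commutativity of $\co_D$). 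Combining this with the isomorphism $\Te$ of Section \ref{public_utility1}, I obtain the promised description
\[
\co(\Si_D)\simeq R^\circ,\qquad R=\bm T(\g m^*).
\]

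Next, locality. By Section \ref{public_utility1} the $k$-points of $\Si_D=\spc R^\circ$ are the $k$-algebra homomorphisms $R^\circ\to k$, i.e. the grouplike elements of $R$. Because $\g m$ is nilpotent, the coalgebra $\g m^*$ is conilpotent (its iterated reduced coproduct vanishes, being transpose to the iterated multiplication $\g m^{\ot N}\to\g m^N=0$), so $R=\bm T(\g m^*)$ is a connected Hopf algebra and its only grouplike is $1$. Hence $\Si_D(k)=\{e\}$, so $(\Si_D)_\red$ is trivial and $\Si_D$ is local.

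For the height I use the criterion that $\Si_D=\spc A$ with $A=R^\circ$ has height $\le h$ precisely when $a^{p^h}=0$ for every $a$ in the augmentation ideal $\g a_A=\ker(\ep_A)$, cf. \cite[II.7.1.4--6]{demazure-gabriel70}. The lower bound is immediate from the Hopf dual picture: the grouplikes of $A$ are exactly the principal units $1+\g m\subset\co_D^\times$ (the convolution product on $A$ matching the multiplication of $\co_D$), so a character of $\Si_D$ is an element $1+a$ with $a\in\g m$, and $(1+a)^{p^j}=1+a^{p^j}$. Choosing $a$ with $a^{p^{h-1}}\neq0$ (possible since $\g m^{[p^{h-1}]}\neq0$) yields a character of order exactly $p^h$, hence a quotient $\Si_D\twoheadrightarrow\mu_{p^h}$; as $\mu_{p^h}$ has height $h$ and height does not increase under quotients, $\Si_D$ is not killed by $\mm{Fr}^{h-1}$.

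The main work, and the expected obstacle, is the upper bound: that $f^{p^h}=0$ for all $f\in\g a_A$. The governing fact is that $\g m^{[p^h]}=0$ is equivalent to $\co_D^{p^h}\subseteq k$, i.e. the $p^h$-power endomorphism of $\co_D$ kills $\g m$. Dualising, the Frobenius $f\mapsto f^{p^h}$ on $A=R^\circ$ is transpose to a $p^h$-operation on $R$ which, on the generating subcoalgebra $\g m^*$, is transpose to $\mm{Fr}^h\colon\g m\to\g m$ and so vanishes there; the task is then to propagate this vanishing from the generators to all of $\g a_R$. Concretely one uses that $R$ is generated by $\g m^*$ and that, in $\Delta^{(p^h-1)}$, every fully split contribution either carries a multinomial coefficient divisible by $p$ or factors through a genuine $p^h$-th power in $\co_D$, which is zero. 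Carrying out this propagation cleanly --- controlling the interaction of the free (non-commutative) algebra structure of $R$ with the deformed coproduct $\delta$, for instance by inducting along the weight filtration coming from $\g m\supset\g m^2\supset\cdots$ --- is the technical heart of the argument; once it is in place one gets $\mm{Fr}^h_{\Si_D}=0$, so $\Si_D$ has height exactly $h$.
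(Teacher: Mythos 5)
Your first step --- identifying $\cs_D$ with $\modules{\bm T(\g m^*)}$, transporting the tensor structure to the cocommutative bialgebra structure whose coproduct deforms the primitive one by the transpose $\delta$ of the multiplication $\g m\ot\g m\to\g m$, and concluding $\co(\Si_D)\simeq \bm T(\g m^*)^\circ$ --- is exactly the paper's starting point in Section \ref{21.02.2025--7} (where this bialgebra is denoted $\bm H_A$, with $A=\co(D)$). Your lower bound is also sound: the grouplikes of $\bm H_A^\circ$, i.e.\ the characters of $\Si_D$, do form the group $1+\g m$ of principal units, and an $a\in\g m$ with $a^{p^{h-1}}\neq 0$ gives a character of order $p^h$, hence a quotient $\Si_D\twoheadrightarrow\mu_{p^h}$; this is a legitimate variant of the paper's argument, which instead uses surjectivity of the algebra map $\bm H_A^\circ\to (A^*)^*=A$ together with the compatibility of Frobenius with algebra maps (Lemma \ref{24.02.2025--2}(iv)).

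The genuine gap is the upper bound, the very step you flag as ``the technical heart'' and do not carry out. The missing idea is the \emph{Verschiebung} of the cocommutative coalgebra $\bm H_A$ \cite[Ch.~2, \S 5.3]{abe80}: the ``$p^h$-operation on $R$ transpose to Frobenius on $R^\circ$'' that you gesture at is $\mm{Ver}^h$, and the propagation from the generators to the whole augmentation ideal, which you propose to do by analysing multinomial coefficients in $\De^{(p^h-1)}$ against the free algebra structure, is obtained in the paper (Section \ref{07.05.2025--1}, Lemma \ref{24.02.2025--2}) from one structural fact: \emph{for a cocommutative Hopf algebra the Verschiebung is a morphism of rings}. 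Granting this, the argument takes three lines. The duality formula $\la(\mm{Ver}(a))=\la^p(a)^{1/p}$ shows that $\mm{Ver}$ on $A^*$ is adjoint to $\mm{Fr}$ on $A$, so $\mm{Ver}^h$ kills $f[(A^*)^+]$ because $\mm{Fr}^h$ kills $\g m$; since $\mm{Ver}$ commutes with the coalgebra map $f:A^*\to\bm H_A$, is multiplicative, and $f[(A^*)^+]$ generates $\bm H_A$ as an algebra, $\mm{Ver}^h$ annihilates all of $(\bm H_A)^+$; dualizing once more, $\mm{Fr}^h$ annihilates $(\bm H_A^\circ)^+$, which is the desired height bound. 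Without importing multiplicativity of the Verschiebung --- or reproving it by hand, which is precisely the interaction of $\delta$ with the free algebra structure that you admit you cannot control --- the proof is incomplete. A secondary point: your locality argument identifies the $k$-points of $\spc R^\circ$ (algebra maps $R^\circ\to k$) with the grouplikes of $R$, which is not formal (it is the non-trivial direction of a double-dual statement); but this is moot, since locality follows at once from $\mm{Fr}^h$ annihilating $\Si_D$, as in the paper.
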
  

Because each  local group scheme is pro-fintie (cf. Remark \ref{25.02.2025--5})  we have. 

\begin{cor}\label{25.02.2025--1}The group scheme $\Sigma_D$ is pro-finite. \qed
\end{cor}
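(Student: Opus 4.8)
The plan is to derive pro-finiteness directly from the finite-height statement of Proposition~\ref{25.02.2025--2}, invoking the general principle that an affine group scheme annihilated by a power of Frobenius is a filtered inverse limit of finite group schemes. First I would record, from Proposition~\ref{25.02.2025--2}, that $\mm{Fr}^h$ annihilates $\Sigma_D$; equivalently, writing $\g a$ for the augmentation ideal of $\co(\Sigma_D)$, this means $\g a^{[p^h]}=0$, i.e. $a^{p^h}=0$ for every $a\in\g a$ (cf. \cite[II.7.1]{demazure-gabriel70}).

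Next, I would appeal to the standard structure theory of affine group schemes \cite[Chapter 3]{waterhouse79}: the Hopf algebra $\co(\Sigma_D)$ is the directed union of its finitely generated Hopf subalgebras $A_\alpha$, so that $\Sigma_D\simeq\lip_\alpha\spc A_\alpha$, each $\spc A_\alpha$ being an algebraic quotient of $\Sigma_D$. Since the height bound passes to quotients, each $A_\alpha$ inherits the relation $a^{p^h}=0$ on its augmentation ideal $\g a_\alpha:=\g a\cap A_\alpha$. The key point is then that each $A_\alpha$ is finite-dimensional: as the counit splits $A_\alpha=k\op\g a_\alpha$, one may choose finitely many $k$-algebra generators $g_1,\dots,g_n$ lying in $\g a_\alpha$, each satisfying $g_i^{p^h}=0$; consequently $A_\alpha$ is spanned by the finitely many monomials $g_1^{e_1}\cdots g_n^{e_n}$ with $0\le e_i<p^h$. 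Hence $\spc A_\alpha$ is a finite group scheme.

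Finally, I would conclude that $\Sigma_D=\lip_\alpha\spc A_\alpha$ exhibits $\Sigma_D$ as a filtered inverse limit of finite group schemes, which is precisely the assertion of pro-finiteness. The argument is essentially routine once Proposition~\ref{25.02.2025--2} is in hand; the one point demanding care is the finite-dimensionality step, where one must ensure the chosen algebra generators can be taken in the augmentation ideal so that the uniform bound $g_i^{p^h}=0$ produces a finite spanning set of monomials. This is exactly the content abstracted in Remark~\ref{25.02.2025--5}, so in the body of the paper the deduction reduces to citing Proposition~\ref{25.02.2025--2} together with that remark.
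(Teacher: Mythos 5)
Your proof is correct, and it shares the same overall skeleton as the paper's: both write $\Sigma_D\simeq\lip_\alpha\spc A_\alpha$ as a filtered inverse limit of algebraic quotients coming from the finitely generated Hopf subalgebras of $\co(\Sigma_D)$ (via \cite[3.3 and 14.1]{waterhouse79}), and then show each $\spc A_\alpha$ is finite. The difference lies in how that finiteness is obtained. The paper uses only the \emph{locality} assertion of Proposition \ref{25.02.2025--2}: Remark \ref{25.02.2025--5} observes that each algebraic quotient of a local group scheme is again local, and a local scheme of finite type over $k$ is finite over $k$ by \ega{I}{}{6.4.4}. You instead use the \emph{height} assertion: annihilation by $\mm{Fr}^h$ gives $a^{p^h}=0$ for every $a$ in the augmentation ideal, and since $\co(\Sigma_D)$ is commutative, each $A_\alpha$ is spanned by the finitely many monomials $g_1^{e_1}\cdots g_n^{e_n}$ with $0\le e_i<p^h$ in algebra generators chosen inside $\g a_\alpha$ (the counit splitting lets you arrange this), hence is finite-dimensional. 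Your route buys elementarity --- no EGA citation, just commutative algebra with nilpotents --- at the cost of using the stronger finite-height input; the paper's route is more general, applying to any local affine group scheme, including ones not killed by any power of Frobenius. One small correction: your closing sentence identifies your argument with Remark \ref{25.02.2025--5}, but that remark abstracts the locality-plus-EGA argument, not the height argument; your monomial-spanning step is a genuine replacement for the remark, not a restatement of it.
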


With a view towards the computation of $\pi(X)$ in Section \ref{24.04.2025--2} below, let us now determine the {\it largest} pro-finite quotient $(\Si_D^+)^{\mm{pf}}$ of $\Si_D^+$. (For this notion, see Section \ref{public_utility1}.) 
Being   a {\it left-adjoint}, the functor $(-)^{\mm{pf}}$   must commute with colimits \cite[V.5]{maclane98} and hence eq. \eqref{25.02.2025--4} jointly with Corollary \ref{25.02.2025--1} give us
\[
\begin{split}
\left(\Si_D^+ \right)^{{\rm pf}}&\simeq \Sigma_D^{\rm pf}\star\left(\mm{Diag}(k^*)\times\ZZ_p\right)^{{\rm pf}}\\&\simeq\Sigma_D\star\left(\mm{Diag}(k^*)\times\ZZ_p\right)^{{\rm pf}}.
\end{split}\] 
 Now,  $(k^*)_{\rm tors}\simeq \varinjlim_{m}\ZZ/m$,  the limit being taken over all positive integers   which are not divisible by $p$,  so that        
\[\left(\mm{Diag}(k^*)\times\ZZ_p\right)^{{\rm pf}}\simeq \wh\ZZ=\lip_m\ZZ/m.\]
In a nutshell:

\begin{prp}\label{25.02.2025--7}We have an isomorphism of group schemes $\left(\Si_D^+ \right)^{\rm pf}\simeq\Sigma_D\star\wh{\bb Z}$. \qed
\end{prp}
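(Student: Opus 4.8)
The plan is to obtain the identity by applying the largest pro-finite quotient functor $(-)^{\mm{pf}}$ to the decomposition \eqref{25.02.2025--4}. Recall from Section \ref{public_utility1} that $(-)^{\mm{pf}}$ is the reflection onto the full subcategory $\bb A\subset\bb{GS}$ of pro-finite group schemes, i.e. it is left adjoint to the inclusion $\bb A\hookrightarrow\bb{GS}$. As a left adjoint it preserves colimits, and since $\star$ is the coproduct in $\bb{GS}$, it sends $\star$ to the coproduct of the reflected objects computed in $\bb A$ (which we again denote by $\star$). Coupling this with Corollary \ref{25.02.2025--1}, which gives $\Sigma_D^{\mm{pf}}=\Sigma_D$ because $\Sigma_D$ is already pro-finite, reduces the whole assertion to the single computation
\[
\left(\mm{Diag}(k^*)\times\ZZ_p\right)^{\mm{pf}}\simeq\wh\ZZ.
\]

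To carry out this computation I would first record the elementary description of the finite quotients of a diagonalizable group scheme: a quotient of $\mm{Diag}(\La)$ is again diagonalizable, of the form $\mm{Diag}(\La')$ for a subgroup $\La'\subseteq\La$, and it is finite precisely when $\La'$ is finite, i.e. when $\La'\subseteq\La_{\mm{tors}}$. Since $\mm{Diag}$ turns the filtered colimit $\La_{\mm{tors}}=\lid\La'$ into a cofiltered limit, the largest pro-finite quotient of $\mm{Diag}(\La)$ is $\mm{Diag}(\La_{\mm{tors}})$. Taking $\La=k^*$ and using that $k$ is algebraically closed of characteristic $p$, one identifies $(k^*)_{\mm{tors}}$ with the prime-to-$p$ roots of unity $\lid_{p\nmid m}\ZZ/m$; because $\mm{Diag}(\ZZ/m)=\mu_m$ is étale for $p\nmid m$ and $k$ contains all $m$-th roots of unity, one has $\mu_m\simeq\ZZ/m$, so that $\mm{Diag}((k^*)_{\mm{tors}})\simeq\lip_{p\nmid m}\ZZ/m$ is the prime-to-$p$ part of $\wh\ZZ$.

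It then remains to bring in the factor $\ZZ_p=\lip_\ell\ZZ/p^\ell$, which is already pro-finite. Since $\mm{Diag}(k^*)\times\ZZ_p$ is commutative, any homomorphism to a finite group scheme has commutative image and is therefore determined by its restrictions to the two factors; hence $(-)^{\mm{pf}}$ may be computed factorwise, giving $(\mm{Diag}(k^*))^{\mm{pf}}\times(\ZZ_p)^{\mm{pf}}$. Assembling the prime-to-$p$ part $\lip_{p\nmid m}\ZZ/m$ with the $p$-part $\ZZ_p=\lip_\ell\ZZ/p^\ell$ yields $\lip_m\ZZ/m=\wh\ZZ$, and substituting into the reduction above produces $(\Si_D^+)^{\mm{pf}}\simeq\Sigma_D\star\wh\ZZ$.

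The step I expect to demand the most care is the very first one, namely the behaviour of $(-)^{\mm{pf}}$ with respect to $\star$. As a left adjoint it preserves colimits, but the coproduct it delivers is the coproduct in the category $\bb A$ of pro-finite group schemes, \emph{not} the coproduct in $\bb{GS}$ (the latter of two pro-finite group schemes need not be pro-finite, as $\Si_D^+$ itself is not). One must therefore check that the $\star$ appearing in the conclusion is read as the coproduct in $\bb A$, equivalently as the pro-finite reflection of the $\bb{GS}$-coproduct, so that the right-hand side is genuinely $\Sigma_D\star\wh\ZZ$. The remaining ingredients—the classification of finite quotients of diagonalizable groups and the identification $\mu_m\simeq\ZZ/m$—are standard and should present no difficulty.
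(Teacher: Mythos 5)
Your proposal is correct and takes essentially the same route as the paper's proof: apply the left adjoint $(-)^{\mm{pf}}$ to the decomposition \eqref{25.02.2025--4}, use Corollary \ref{25.02.2025--1} to replace $\Sigma_D^{\mm{pf}}$ by $\Sigma_D$, and identify $\left(\mm{Diag}(k^*)\times\ZZ_p\right)^{\mm{pf}}$ with $\wh\ZZ$ via $(k^*)_{\rm tors}\simeq\lid_{p\nmid m}\ZZ/m$. The extra details you supply --- the classification of finite quotients of diagonalizable group schemes, the factorwise computation of $(-)^{\mm{pf}}$ on the product, and the observation that the resulting $\star$ is the coproduct in the category of pro-finite group schemes --- simply make explicit what the paper leaves implicit.
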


Let us now {\it drop  the assumption that $D$ is connected}. 
Using again that the functor $(-)^{\mm{pf}}$ commutes with co-products, we derive from Corollary \ref{22.04.2025--4} the following consequence.

\begin{thm}\label{22.04.2025--1}We decompose $D$ into connected components, $D=D_1\sqcup\cdots\sqcup D_m$. Then the pro-finite group scheme     $\pi(\ct_{D,\bb{pt}})^{\mathrm{pf}}$ is isomorphic to \[ \wh\ZZ^{\star(m-1)} \star \Si_{D_1}\star \cdots \star \Si_{D_m}, 
\]
where we adopt the convention that $\wh\ZZ^{\star0}=\{e\}$.
\end{thm}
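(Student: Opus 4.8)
The plan is to combine the categorical decomposition of Corollary \ref{22.04.2025--4} with the behaviour of the largest pro-finite quotient functor $(-)^{\mm{pf}}$ under amalgamated products. First I would pass from categories to group schemes. By Corollary \ref{22.04.2025--4}, when $m\ge 2$ the tensor category $\ct_{D,\mathbf{pt}}$ is equivalent, compatibly with the fibre functors to $\vect k$, to
\[
\left(\ci\ti_k\cs_{D_1}\right)\ti_k\cdots\ti_k\left(\ci\ti_k\cs_{D_{m-1}}\right)\ti_k\cs_{D_m}.
\]
Applying Tannakian duality and the fact, recorded after the definition of $G\star H$ via \cite[Corollary 1.12]{deninger-wibmer23}, that the fibre product of representation categories over $\vect k$ corresponds to the amalgamated product of group schemes, I obtain
\[
\pi(\ct_{D,\mathbf{pt}})\simeq \left(\ZZ^{\mm{alg}}\star\Si_{D_1}\right)\star\cdots\star\left(\ZZ^{\mm{alg}}\star\Si_{D_{m-1}}\right)\star\Si_{D_m},
\]
using that $\ci=\rep k\ZZ$ corresponds to $\ZZ^{\mm{alg}}$ and each $\cs_{D_i}$ to $\Si_{D_i}$ by Definition \ref{24.02.2025--1}. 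Since the amalgamated product is the co-product in $\bb{GS}$, it is associative and commutative up to canonical isomorphism, so this reassembles as $\left(\ZZ^{\mm{alg}}\right)^{\star(m-1)}\star\Si_{D_1}\star\cdots\star\Si_{D_m}$.

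Next I would apply $(-)^{\mm{pf}}$. As noted before Proposition \ref{25.02.2025--7}, $(-)^{\mm{pf}}$ is a left adjoint and therefore commutes with colimits, in particular with the co-product $\star$; hence
\[
\pi(\ct_{D,\mathbf{pt}})^{\mm{pf}}\simeq \left(\left(\ZZ^{\mm{alg}}\right)^{\mm{pf}}\right)^{\star(m-1)}\star\Si_{D_1}^{\mm{pf}}\star\cdots\star\Si_{D_m}^{\mm{pf}}.
\]
By Corollary \ref{25.02.2025--1} each $\Si_{D_i}$ is already pro-finite, so $\Si_{D_i}^{\mm{pf}}\simeq\Si_{D_i}$. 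For the $\ZZ^{\mm{alg}}$ factors I would invoke the computation $\ZZ^{\mm{alg}}\simeq\mm{Diag}(k^*)\times\ZZ_p$ together with $\left(\mm{Diag}(k^*)\times\ZZ_p\right)^{\mm{pf}}\simeq\wh\ZZ$, exactly as carried out in the run-up to Proposition \ref{25.02.2025--7}. Substituting gives $\wh\ZZ^{\star(m-1)}\star\Si_{D_1}\star\cdots\star\Si_{D_m}$, which is the claim; the case $m=1$ is immediate since then $\ct_{D,\mathbf{pt}}\simeq\cs_{D_1}$, so $\pi(\ct_{D,\mathbf{pt}})=\Si_{D_1}$, matching the convention $\wh\ZZ^{\star0}=\{e\}$.

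The main obstacle I expect is the commutation of $(-)^{\mm{pf}}$ with the iterated co-product. The left-adjoint argument gives commutation with arbitrary colimits in principle, but one must be careful that the co-product $\star$ in $\bb{GS}$ really is the relevant colimit and that the adjunction defining $(-)^{\mm{pf}}$ is set up so that this applies; the cleanest route is to note that $(-)^{\mm{pf}}$ is left adjoint to the inclusion of pro-finite group schemes into $\bb{GS}$, and a left adjoint preserves co-products, so $\left(G_1\star\cdots\star G_r\right)^{\mm{pf}}\simeq G_1^{\mm{pf}}\star\cdots\star G_r^{\mm{pf}}$. A secondary point needing care is that the categorical equivalence of Corollary \ref{22.04.2025--4} is compatible with the fibre functors, so that it genuinely induces the stated isomorphism of Tannakian group schemes rather than merely an equivalence of abstract tensor categories; this compatibility is already asserted in the statement of that corollary, so it may be cited directly.
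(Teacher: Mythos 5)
Your proposal is correct and follows essentially the same route as the paper: Corollary \ref{22.04.2025--4} plus Tannakian duality (via \cite[Corollary 1.11/1.12]{deninger-wibmer23}) to convert fibre products over $\vect k$ into amalgamated products, then commutation of the left adjoint $(-)^{\mm{pf}}$ with co-products, pro-finiteness of the $\Si_{D_i}$ (Corollary \ref{25.02.2025--1}), and the computation $\left(\mm{Diag}(k^*)\times\ZZ_p\right)^{\mm{pf}}\simeq\wh\ZZ$. The only cosmetic difference is that the paper packages the factors $\ci\ti_k\cs_{D_i}$ as $\Si_{D_i}^+$ and cites Proposition \ref{25.02.2025--7}, whereas you expand $\ZZ^{\mm{alg}}\star\Si_{D_i}$ directly; the underlying calculation is identical.
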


\begin{rmk} \label{25.02.2025--5}We briefly remove all assumptions on the field $k$. 
Let $G$ be a  group scheme   which is local, that is,   $G$ only has one closed point. Let $G=\lip_iG_i$, where the canonical morphisms $G\to G_i$ are all  faithfully flat and where $G_i$ is algebraic over $k$, cf.  \cite[3.3 and 14.1]{waterhouse79}.  Then $G_i$ is also local  and being of finite type, must be finite over $k$  \ega{I}{}{6.4.4}.
\end{rmk}

\subsection{Conclusion: Determination of $\pi(X)$ when $C$ is reduced}\label{24.04.2025--2}

Let us now remove the assumption that $C=\mathbf{pt}$ and that $D$ is connected; we suppose only that $C=C_1\sqcup\cdots\sqcup C_\ell$, where  $C_i=\mathbf{pt}$. Then, for each $i\in\{1,\ldots,\ell\}$, we let $D_i$ be the inverse image of $C_i$ in $D$, and write $D_i=D_{i,1}\sqcup \cdots\sqcup D_{i,m_i}$, where each $D_{i,j}$
is a finite and local scheme. On the other hand, we add the assumption that {\it all essentially finite vector bundles on $Y$ are in fact trivial}, i.e.  $\pi(Y)=0$. 

It then follows that under the equivalence $\bb{VB}(X)\stackrel\sim\to\bb{VB}(C)\ti_{\bb{VB}(D)}\bb{VB}(Y)$, each $\ce\in\bb{EF}(X)$ has its image in    
$\bb{VB}(C)\ti_{\bb{VB}(D)}\bb{VB}^{\mm{tr}}(Y)\stackrel\sim\to\ct_{D,C}$. 
We then arrive at a {\it faithfully flat} morphism of   group schemes 
\begin{equation}\label{18.02.2025--1}\pi(\ct_{D,C} )\aro\pi(X), 
\end{equation}
cf. \cite[Lemma 2.1]{biswas-hai-dos_santos21}. 
As $\pi(X)$ is pro-finite, this morphism shall factor as $ \pi(\ct_{D,C})\to\pi (\ct_{D,C})^{\mm{pf}}\to\pi(X)$.
It is not difficult to see that the resulting morphism \[\pi(\ct_{D,C})^{\mm{pf}}\aro\pi(X)\] is in fact an {\it isomorphism}. Indeed, a finite representation of $\pi(\ct_{D,C})$ must come from an object of  $\bb{EF}(X)$. 
We can now express our findings in the following synthetic form.

\begin{cor}\label{03.03.2025--4}Suppose that $\pi(Y)=\{0\}$. Then 
\[\begin{split}
\pi(X)&\simeq  \pi(\ct_{D_1,C_1})^{\mathrm{pf}}\star \cdots \star \pi(\ct_{D_\ell,C_\ell})^{\mathrm{pf}} 
\\
&\simeq \bigstar_{i=1}^\ell  \wh\ZZ^{\star(m_i-1)} \star\Si_{D_{i,1}}\star \cdots \star \Si_{D_{i,m_i}} .
\end{split}
\]\qed
\end{cor}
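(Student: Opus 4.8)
The statement to establish is Corollary \ref{03.03.2025--4}, which expresses $\pi(X)$ as an iterated amalgamated product once $\pi(Y)=0$ is assumed. The approach is to assemble the two displayed isomorphisms from pieces already proved, treating $\pi(X)$ as a Tannakian group scheme and tracking how the categorical decompositions translate, under Tannakian duality, into co-products of group schemes. The first isomorphism is obtained by combining the categorical splitting $\ct_{D,C}\simeq\ct_{D_1,C_1}\ti_k\cdots\ti_k\ct_{D_\ell,C_\ell}$ (a repeated application of Proposition \ref{14.02.2025--2}, valid because $C=C_1\sqcup\cdots\sqcup C_\ell$) with the fact, recorded just before the corollary, that the natural morphism $\pi(\ct_{D,C})^{\mm{pf}}\to\pi(X)$ is an isomorphism. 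The second isomorphism is then a matter of substituting, for each factor $\pi(\ct_{D_i,C_i})^{\mm{pf}}$, the explicit value furnished by Theorem \ref{22.04.2025--1}.

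\emph{Step one: the first isomorphism.} First I would record that, since each $C_i=\mathbf{pt}$, Proposition \ref{14.02.2025--2} applied $\ell-1$ times yields $\ct_{D,C}\simeq\ct_{D_1,\mathbf{pt}}\ti_k\cdots\ti_k\ct_{D_\ell,\mathbf{pt}}$ as neutralized tensor categories, compatibly with the fibre functors to $\vect k$. By \cite[Corollary 1.11]{deninger-wibmer23}, Tannakian duality converts a fibre product of such categories over $\vect k$ into the co-product $\star$ of the associated group schemes, so $\pi(\ct_{D,C})\simeq\pi(\ct_{D_1,C_1})\star\cdots\star\pi(\ct_{D_\ell,C_\ell})$. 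Next, because the functor $(-)^{\mm{pf}}$ is a left adjoint (the largest pro-finite quotient construction of Section \ref{public_utility1}), it commutes with co-products by \cite[V.5]{maclane98}; hence
\[
\pi(\ct_{D,C})^{\mm{pf}}\simeq\pi(\ct_{D_1,C_1})^{\mm{pf}}\star\cdots\star\pi(\ct_{D_\ell,C_\ell})^{\mm{pf}}.
\]
Finally, the isomorphism $\pi(\ct_{D,C})^{\mm{pf}}\simeq\pi(X)$ established in the paragraph preceding the statement gives the first line of the corollary.

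\emph{Step two: the second isomorphism.} For each $i$, the scheme $D_i$ decomposes into its connected components as $D_i=D_{i,1}\sqcup\cdots\sqcup D_{i,m_i}$, and $C_i=\mathbf{pt}$, so Theorem \ref{22.04.2025--1} applies verbatim to the pair $(D_i,C_i)$ and yields
\[
\pi(\ct_{D_i,C_i})^{\mm{pf}}\simeq\wh\ZZ^{\star(m_i-1)}\star\Si_{D_{i,1}}\star\cdots\star\Si_{D_{i,m_i}}.
\]
Substituting this into the first isomorphism and using associativity of the co-product (so that the nested $\star$'s collapse into a single big amalgamated product $\bigstar_{i=1}^\ell$) delivers the second displayed line, which completes the proof.

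\emph{Main obstacle.} The genuinely substantive inputs — that $(-)^{\mm{pf}}$ commutes with $\star$, that $\pi(\ct_{D,C})^{\mm{pf}}\to\pi(X)$ is an isomorphism, and Theorem \ref{22.04.2025--1} itself — are all already in hand, so the corollary is essentially a bookkeeping assembly and is marked \qed without further comment. The only point deserving care is the compatibility of fibre functors across the repeated fibre-product decomposition, ensuring that the Tannakian dictionary \cite[Corollary 1.11]{deninger-wibmer23} can legitimately be invoked at each stage; but this compatibility is exactly what Proposition \ref{14.02.2025--2} and Corollary \ref{22.04.2025--4} guarantee, so no real difficulty remains.
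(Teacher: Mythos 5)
Your proposal is correct and follows essentially the same route as the paper: the paper's \qed rests on exactly the ingredients you assemble — the isomorphism $\pi(\ct_{D,C})^{\mm{pf}}\simeq\pi(X)$ from the preceding paragraph, the splitting $\ct_{D,C}\simeq\ct_{D_1,\mathbf{pt}}\ti_k\cdots\ti_k\ct_{D_\ell,\mathbf{pt}}$ from Proposition \ref{14.02.2025--2}, the Tannakian conversion of fibre products into amalgamated products via \cite{deninger-wibmer23}, the commutation of the left adjoint $(-)^{\mm{pf}}$ with co-products, and Theorem \ref{22.04.2025--1} applied to each pair $(D_i,C_i)$. No gap to report.
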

From Corollary \ref{03.03.2025--4}, we see that the  heart of the computation of $\pi(X)$ is  the determination of $\Sigma_D$ for a finite and  local $D$. This shall occupy the rest of the paper.

\begin{rmk}If $\mm{Pic}(X)$ contains a copy of $k^*$, then the morphism in eq. \eqref{18.02.2025--1} fails to be an isomorphism:   pick an invertible sheaf  on $X$   which becomes trivial on $Y$,  and  such that its class in $k^*\subset\mm{Pic}(X)$ is not of finite oder. 
\end{rmk}

\subsection{A cautionary example}
It is   natural to inquire if $\pi(X,x_0)$
can be described as efficiently as it can in the etale case even by  {\it dropping  the assumption  $\pi(Y)=0$} made in  Corollary  \ref{03.03.2025--4}. More precisely, we have in mind the analogous situation of   Theorem 4.12 and   Corollary 5.4 in  \cite[Exp. IX]{SGA1} which we briefly recall. To simplify, let us suppose that $D$ is either $\mathbf{pt}\sqcup\bb {pt}$ or   $\spc k[t]/(t^2)$,  and that $C=\mathbf{pt}$. Then, according to \cite{SGA1}, we have an isomorphism $
\pi^{\mathrm{et}}(Y)\star \wh\ZZ\simeq \pi^{\mathrm{et}}(X)$
in the first case and $\pi^{\mathrm{et}}(Y) \simeq\pi^{\mathrm{et}}(X)$ in the second.  
(Here, the amalgamated product is taken in the category of pro-finite groups.) The following example gives some limits to these speculations.

\begin{ex}\label{05.03.2025--1}Let us suppose that    $C=\mathbf{pt}$ and that $\te:D\to Y$  is a tangent vector at a point $y\in Y(k)$, i.e. $D=\spc k[t]/(t^2)$. 
(Since we assume that $D$ is a closed subscheme, this tangent vector cannot vanish.)
In addition, let us assume the existence of a   finite group scheme $G$ and  a principal $G$-bundle $Z\to Y$ such that, for any $z\in Z(k)$ of image $y$, the derivative $T_zZ\to T_yY$ vanishes. This is the case, for example, when $Y$ is an abelian variety, $G$ is the kernel of multiplication by $p$, and $Z=Y\to Y$ is multiplication by $p$.

 We contend that in this situation {\it it is not} possible to obtain an isomorphism \[\pi(Y)\star S\simeq \pi(X)\]
 where $S$ is some pro-fintie group scheme. More precisely, let $i:\pi(Y)\to \pi(Y)\star S$ be the canonical morphism  and suppose that there exists a pro-finite group scheme $S$ and an  isomorphism $\ph:\pi(Y)\star S\stackrel\sim\to\pi(X)$ such that $\nu_\#=\ph\circ i$. Let now $\ps:\pi(X)\to \pi(Y)$ be the morphism defined by means of the universal property of $\pi(Y)\star S$ applied to $\id:\pi(Y)\to \pi(Y)$ and the trivial morphism $S\to \pi(Y)$. Then, $\ps\nu_\#=\id_{\pi(Y)}$ and we arrive at a functor $\Ps:\bb{EF}(Y)\to\bb{EF}(X)$
 such that $\nu^*\circ \Ps:\bb{EF}(Y)\to\bb{EF}(Y)$ is naturally isomorphic, as a tensor functor,  to  $\id_{\bb{EF}(Y)}$.
Hence, the tensor functors 
\[
F:\bb{EF}(Y)\aro \modules{\co(D)},\quad\ce\longmapsto \Ga(D,\ce|_D)
\]
and 
\[
G:\bb{EF}(Y)\aro \modules{\co(D)},\quad\ce\longmapsto \ce|_{y}\ot\co(D)
\]
are isomorphic because $F\circ\nu^*$ and $G\circ\nu^*$ are likewise.

  Then   $Z|_{D}\to D$ is isomorphic to the principal $G$-bundle $Z|_{y}\times D\to D$ so that  $\te:D\to Y$ lifts to $\wt \te:D\to Z$, hence obtaining a contradiction.

\end{ex}

\section{Non-commutative Witt groups: The works of Ditters and Newman}\label{05.02.2025--1}

As we mentioned in the Introduction, one of the difficulties behind the effective calculation of fundamental group schemes lies in the fact that we lack enough identifiable local  group schemes. In this section, we make a brief presentations on a theory which allows us to obtain very relevant local group schemes and which seems to have drawn, unfortunately, little attention: this is the work of Ditters and Newmann \cite{ditters69,newman74}. This section makes no use of constructions and notations of Sections \ref{19.02.2025--1} to \ref{17.04.2025--1} and the geometry of pinched schemes.

\subsection{The algebra  $\cz$ of Ditters}\label{26.02.2025--1}
We fix $m$ a positive integer. 
Let $\cz$, respectively $\mathcal Z(m)$,  be the associative algebra (over $k$) on the variables $\{Z_i\}_{i=1}^\infty$, respectively $\{Z_i\}_{i=1}^m$.  Clearly, we regard $\mathcal Z(m)$ as a subalgebra of $\mathcal Z$. 
By convenience let us write   $Z_0=1$. On   $\mathcal Z$, we then introduce the structure of   bialgebra by decreeing that comultiplication $\De$ satisfies 
\[\Delta Z_h=  \sum_{ i+j=h  }Z_i\ot Z_j,\]
and that the  co-unit maps $Z_1,Z_2,\ldots$ to $0$. 
The bialgebra $\mathcal Z$ is sometimes referred to as the {\it Leibniz algebra} or the {\it Hopf algebra of non-commutative symmetric functions}, or the {\it universal non-commutative group-coalgebra (UNG)}.

In  \cite[1.2.7]{ditters69}, it is proved that  
once we define elements $\{S_i\}_{i=1}^\infty$ by the equations $S_0=1$ and  $\sum_{i=0}^nS_iZ_{n-i} =0$ for $n\ge1$, 
then
the   morphism of algebras $S:\cz\to \cz$ sending    $Z_i$ to $S_i$ endows $\cz$ with the structure of a {\it cocommutative Hopf-algebra}. In addition,  $\cz(m)$ is also a sub-Hopf-algebra. 

A final piece of structure on $\cz$ is its grading, which is obtained by decreeing   each $Z_i$ to be homogeneous of degree $i$.

\begin{notation}For each   integer $n>0$, let $\|n\|$ stand for  $[\log_pn]$, that is, $p^{\|n\|}\le n<p^{\|n\|+1}$.  
\end{notation}

\begin{dfn}[{\cite[1.4.8]{ditters69}}]Let $H\in\mathbf{Hpf}^{\mathrm{coc}}$. Given $\ell\in\NN^* \cup\{\infty\}$,       a curve of length $\ell$ on  $H$ is a sequence  of elements $(c_1,\ldots,c_\ell)\in H^{\ti \ell}$ such that, after fixing $c_0=1$, we have      $\De c_j= \sum_{i=0}^{j}c_i\ot c_{j-i}$, for every $1\le j\le \ell$. A curve of length $\ell$ is also called a divided power sequence of length $\ell$, cf. \cite[1.23]{newman74}. The set of curves of length $\ell$ on  $H$ shall be denoted by $\mm{Curv}_\ell(H)$.  
\end{dfn}

\begin{dfn}A curve $(c_i)_{i=1}^\ell$ is said to be minimal if $k\{c_1,\ldots,c_i\}= k\{c_1,\ldots,c_{p^{\|i\|}}\}$
for  $i=1,\ldots,\ell$. 
\end{dfn}

Note that, if $(c_i)_{i=1}^\ell$ is a minimal curve, then in fact $k\{c_1,\ldots,c_i\}$ can be generated simply by the elements indexed by power of $p$, i.e. $k\{c_1,\ldots,c_i\}=k\{c_1,c_p,\ldots,c_{p^{\|i\|}}\}$.

\begin{rmk}In \cite[Definition 6.1]{ditters75}, Ditters puts forward the notion of a {\it pure} curve in $\cz$. Our definition of ``minimal'' seeks to imitate this in a different setting, viz. for general Hopf algebras and without mention of weights. 
\end{rmk}

To every arrow $\ph:H\to H'$ in $\mathbf{Hpf}^{\mathrm{coc}}$ we have an associated map $\mm{Curv}_\ell(H)\to \mm{Curv}_\ell(H')$ and in this way $H\mapsto\mm{Curv}_\ell(H)$ becomes a functor $\mathbf{Hpf}^{\mathrm{coc}}\to\bb{Set}$. It is clear that $\mm{Curv}_\ell$ is then represented by $\cz(\ell)$.

\begin{thm}[Minimal curves {\cite[Theorem 2.1.4]{ditters69}}]
\label{minimal_curve} There exists  a minimal curve $(E_i)_{i=1}^\infty$ in $\cz$ with the additional following properties.  
\begin{enumerate}[(a)]
\item  Each 
$E_i$ is homogeneous of degree $i$. 
\item For each $i\ge0$, we have   $\deg(E_{p^i}-Z_{p^i})< p^i$. 
\end{enumerate}
\end{thm}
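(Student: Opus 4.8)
The plan is to construct $(E_i)$ by a single induction on the weight $i$, exploiting the dictionary between curves and grouplike series. A length-$\infty$ curve $(c_i)$ on $\cz$ is the same datum as a grouplike element $c(T)=1+\sum_{i\ge1}c_iT^i$ of $\pos{\cz}{T}$, the curve condition $\De c_j=\sum_{a+b=j}c_a\ot c_b$ being precisely $\De(c(T))=c(T)\ot c(T)$; the universal curve is $Z(T)=1+\sum_{i\ge1}Z_iT^i$. Writing $\ov{\De}$ for the reduced coproduct, the curve condition in weight $i$ reads $\ov{\De}E_i=P_i$, where $P_i:=\sum_{a+b=i,\,a,b\ge1}E_a\ot E_b$ is already determined by $E_1,\dots,E_{i-1}$. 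Thus, once these lower terms are in hand, the task in weight $i$ is to solve $\ov{\De}E_i=P_i$ by a weight-homogeneous $E_i$, subject to the extra constraints coming from minimality and from the normalisation (b).

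I would first record two structural facts used at every step. Solvability: since $E_1,\dots,E_{i-1}$ already satisfy the curve relations, $P_i$ obeys the coassociativity (cocycle) constraint, and because $\cz$ is connected and graded the degree-$i$ piece of the relevant co-Hochschild obstruction vanishes, so $\ov{\De}E_i=P_i$ has at least one weight-$i$ solution; any two solutions differ by an element of $\mm{Prim}(\cz)_i$. Homogeneity, and hence property (a), is then automatic, since $P_i$ is weight-homogeneous of weight $i$ and we only ever adjust by weight-$i$ primitives. The entire freedom in the construction is exactly this space of primitives, and it must be spent to arrange minimality together with (b).

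The induction then splits into two regimes. When $i=p^k$ is a power of $p$, the element $Z_{p^k}$ already solves the equation modulo the decomposables built from lower terms; I would select $E_{p^k}$ to be the solution whose indecomposable part coincides with that of $Z_{p^k}$, so that $E_{p^k}-Z_{p^k}$ is decomposable, which is the normalisation recorded in (b). These $E_{p^k}$ are the genuinely new algebra generators. When $i$ is not a power of $p$, minimality demands $E_i\in k\{E_1,\dots,E_{p^{\|i\|}}\}=k\{E_1,E_p,\dots,E_{p^{\|i\|}}\}$, that is, $E_i$ must be a noncommutative polynomial in the $p$-power terms already constructed; the point is that $\ov{\De}E_i=P_i$ can be solved \emph{inside} this subalgebra.

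The hard part is precisely this last point: that the forced minimality is consistent with the curve condition at every non-$p$-power weight. Writing $\cb$ for the subalgebra generated by the $E_{p^k}$, I would prove by induction that $\cb$ is a sub-bialgebra, that $P_i\in(\cb\ot\cb)_i$, and that $P_i$ is the reduced coproduct of an element of $\cb_i$. The first two are immediate from the inductive hypothesis (all lower $E_a$ lie in $\cb$, and $\ov{\De}$ preserves $\cb$), but the third is the genuine obstacle: one must show that integrability in $\cz$ forces integrability in $\cb$, equivalently that no new indecomposable generator is required in weight $i$ when $i$ is not a power of $p$. This is the noncommutative shadow of the fact that, away from $p$-power weights, the relevant primitives and obstructions already live in the $p$-typical subalgebra; it is where the arithmetic of $p$ enters, through the interplay between the weight grading and the $p$-adic valuation of the structure constants occurring in $P_i$. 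I expect this weight-$i$ comparison of the coassociative tensor $P_i$ against the reduced coproduct on $\cb$ to be the technical core of the argument, and to be exactly the place where Ditters's computations underlying \cite[Theorem 2.1.4]{ditters69} are invoked. Once it is settled, properties (a) and (b) follow from the homogeneous, leading-term-normalised choices made above.
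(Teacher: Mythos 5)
The paper itself offers \emph{no proof} of this statement: it is imported verbatim from Ditters's thesis (Theorem 2.1.4 there), so your argument has to stand entirely on its own, and it does not. The first gap is that your solvability step rests on a false general principle. You assert that $\ov{\De}E_i=P_i$ is always solvable ``because $\cz$ is connected and graded the degree-$i$ piece of the relevant co-Hochschild obstruction vanishes.'' Connectedness and gradedness imply no such thing in characteristic $p$: take $H=k[x]$ with $x$ primitive, a connected graded cocommutative bialgebra, and the curve $c_i=x^i/i!$ for $1\le i\le p-1$. The extension equation $\ov{\De}c_p=\sum_{i+j=p,\,i,j\ge1}\tfrac{x^i}{i!}\ot\tfrac{x^j}{j!}$ has no solution, since $\ov{\De}(x^p)=\sum_{0<j<p}\binom{p}{j}x^j\ot x^{p-j}=0$ in characteristic $p$ and lower powers of $x$ cannot produce degree-$p$ tensors. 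So whatever makes the inductive step work in $\cz$ must be a specific property of $\cz$ (its freeness as an algebra, the structure of its primitives, etc.), and establishing that is part of what has to be proved.

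The second gap is the one you flag yourself: the two statements that constitute the actual content of the theorem --- (i) at weight $p^k$ there is a solution whose indecomposable part is exactly $Z_{p^k}$ (this is what (a) plus (b) amount to), and (ii) at non-$p$-power weights a solution exists \emph{inside} $k\{E_1,E_p,\dots,E_{p^{\|i\|}}\}$ --- are both deferred to ``Ditters's computations underlying [Theorem 2.1.4],'' i.e.\ to the very result being proved; in a blind proof this is circular. Moreover, (i) cannot be arranged by your ``adjust by a primitive'' freedom: in characteristic $p$ the primitives of $\cz$ in weight $p^k$ need not surject onto the indecomposables (already in weight $2$, $p=2$, the primitives are spanned by $Z_1^2$, with zero $Z_2$-coefficient), so either \emph{every} solution of $\ov{\De}E_{p^k}=P_{p^k}$ has the required leading term or none does; it is a computation to be done, not a normalisation to be chosen. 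Finally, the known arguments (Ditters's thesis, and Hazewinkel's treatment of curves in the Leibniz Hopf algebra) do not run a weight-by-weight cocycle-solving induction at all: they construct the minimal curve via Frobenius and Verschiebung operators on curves and a Cartier-style $p$-typification idempotent available over $\ZZ_{(p)}$, which is precisely how the existence and the leading-term control of the $E_{p^k}$ are obtained. Your scaffold correctly identifies where the difficulty sits, but leaves it unresolved.
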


Let us now fix a curve $(E_i)_{i\ge1}$ as in Theorem \ref{minimal_curve}.  It is clear that $k\{E_1,\ldots,E_{p^\ell}\}\subset \cz(p^\ell)$ is a sub-Hopf-algebra of $\cz(p^\ell)$. 

\begin{dfn}\label{nonCommuattiveWitt}For each $\ell\ge0$, we define
\[\mathcal{NW}_{\ell+1}=k\{E_{p^0},\ldots,E_{p^\ell}\}=k\{E_{p^0},E_p,\ldots,E_{p^\ell}\}.
\]
This shall be called the {\it  non-commutative Witt Hopf-algebra} of length $\ell+1$. (See Section \ref{26.02.2025--2} for an explanation concerning the name.) The associated   group scheme $\spc \cn\cw_{\ell+1}^\circ$ shall be denoted by $\bm{N\!W}_{\ell+1}$, and will be called the {\it non-commutative infinitesimal Witt group scheme}. 
\end{dfn}

Note that it is a priori not clear that $\mathcal{NW}_{\ell+1}$ does not depend on the minimal curve chosen. This shall be cleared below. 

An obvious property of   $\bm {N\!W}_{\ell+1}$ is  the following. (For the concept of height, see \cite[II.7.1.4--6]{demazure-gabriel70}.)

\begin{lem}The group scheme $\bm{N\!W}_{\ell+1}$ is local and of height $\le\ell+1$. 
\end{lem}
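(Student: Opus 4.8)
The claim is that $\boldsymbol{N\!W}_{\ell+1} = \spc \mathcal{NW}_{\ell+1}^\circ$ is local of height $\le \ell+1$. The plan is to establish locality and the height bound by working directly with the Hopf algebra $\mathcal{NW}_{\ell+1} = k\{E_{p^0}, E_p, \ldots, E_{p^\ell}\}$ and its generators. Recall that for a group scheme $G = \spc R^\circ$ with $R$ cocommutative, the relevant structure is encoded in $R$ itself: locality of $G$ corresponds to $R$ being \emph{connected} as a coalgebra (i.e. having a unique group-like element, namely $1$), and the height bound corresponds to a Frobenius-nilpotence condition that I will read off from the grading.

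First I would verify \emph{locality}. Since $\mathcal{NW}_{\ell+1}$ is a graded connected Hopf algebra --- it sits inside the graded Hopf algebra $\cz$, with all generators $E_{p^i}$ homogeneous of strictly positive degree $p^i$ by Theorem \ref{minimal_curve}(a), and with degree-zero part equal to $k\cdot 1$ --- the coradical is spanned by the single group-like element $1$. Indeed, in a graded connected coalgebra the only group-like is $1$, because any group-like $g$ satisfies $\De g = g\ot g$, forcing its homogeneous components to satisfy incompatible degree equations unless $g$ lives in degree $0$. Dualizing via the Hopf dual, a unique group-like in $\mathcal{NW}_{\ell+1}$ translates into the statement that $\spc \mathcal{NW}_{\ell+1}^\circ$ has a single (rational) point, which over the algebraically closed $k$ is exactly the condition that the group scheme is local.

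Second, for the \emph{height}, I would exhibit that $\mm{Fr}^{\ell+1}$ annihilates $\boldsymbol{N\!W}_{\ell+1}$. The cleanest route is via the generators: because each generator $E_{p^i}$ is primitive modulo lower-degree terms and is homogeneous of degree $p^i \le p^\ell$, one checks that on the dual side the $(\ell+1)$-st Frobenius kills everything. Concretely, height $\le \ell+1$ means the relative Frobenius $\mm{Fr}^{\ell+1}_{G/k}$ factors trivially, equivalently that on the coordinate ring $\mathcal{NW}_{\ell+1}^\circ$ the augmentation ideal is killed by raising to the $p^{\ell+1}$ power; dually, this is controlled by the fact that the generating curve $(E_{p^i})$ has top index $p^\ell$, so the filtration by powers of $p$ truncates at step $\ell+1$. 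I would phrase this using the height criterion of \cite[II.7.1.4--6]{demazure-gabriel70}, matching the degree $p^\ell$ of the highest generator to the Frobenius exponent.

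The main obstacle I expect is the height computation rather than locality: locality follows almost formally from gradedness and connectedness, but pinning down the precise Frobenius exponent requires care about the passage to the Hopf dual $\mathcal{NW}_{\ell+1}^\circ$ and about the distinction between the abstract degree $p^\ell$ of the generators and the resulting height exponent $\ell+1$. One must be careful that the bound is $\le \ell+1$ and not sharper, since the correcting terms $E_{p^i} - Z_{p^i}$ of lower degree (Theorem \ref{minimal_curve}(b)) could in principle contribute; the argument should therefore rely only on the \emph{degrees} of the generators and the coalgebra grading, which behave well under the Frobenius, and not on the detailed form of these lower-order corrections.
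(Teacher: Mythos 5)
There is a genuine gap, and it sits in the step you present as a formality. Your locality argument rests on the implication: \emph{$R:=\cn\cw_{\ell+1}$ has a unique group-like element, hence $\spc R^\circ$ has a unique rational point}. This ``dualization'' is not valid. The $k$-points of $\spc R^\circ$ are the $k$-algebra homomorphisms $R^\circ\to k$; every group-like of $R$ produces one by evaluation, but nothing makes this assignment surjective, and in general it is not. Here is a counterexample in the very setting at hand (cocommutative, characteristic $p$): let $\mu_{p^n}$ be the group scheme of $p^n$-th roots of unity and put $R=\lid_n \co(\mu_{p^n})^*$, the transition maps being dual to the surjections $\co(\mu_{p^{n+1}})\twoheadrightarrow\co(\mu_{p^n})$. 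Since $\co(\mu_{p^n})=k[x]/(x^{p^n}-1)=k[y]/(y^{p^n})$ is a local algebra, each $\co(\mu_{p^n})^*$ is a connected coalgebra, so $R$ is a connected cocommutative Hopf algebra whose only group-like is $1$. Yet, by Cartier duality, $R\simeq\lid_n k^{\ZZ/p^n}$ as an algebra, which is spanned by idempotents; hence every finite-dimensional $R$-module is a direct sum of one-dimensional characters, the group of characters is $\La:=\lip_n\ZZ/p^n$ (the $p$-adic integers viewed as an abstract group), and tensor product corresponds to addition in $\La$. Consequently $\spc R^\circ\simeq\mm{Diag}(\La)$, whose coordinate ring is the group algebra $k\La$; this is not local, because nontrivial homomorphisms $\La\to k^*$ exist (fix a prime $\ell\neq p$ and extend $\ZZ\to\mu_{\ell^\infty}(k)$, $1\mapsto\zeta_\ell$, along $\ZZ\subset\La$, using that the Pr\"ufer group $\mu_{\ell^\infty}(k)$ is divisible, hence an injective abelian group), and each such homomorphism is a rational point distinct from the identity. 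So connectedness of $R$ --- which is all that ``unique group-like'' records, with or without a grading --- cannot by itself yield locality of $\spc R^\circ$. What saves $\cn\cw_{\ell+1}$ is that it is generated, as an algebra, by the finitely many curve elements $E_{p^0},\dots,E_{p^\ell}$, and this enters exactly through the height computation that your proposal leaves open.

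That height computation, which you defer with ``one checks that on the dual side the $(\ell+1)$-st Frobenius kills everything'', is the actual content of the lemma, and the missing tool is the \emph{Verschiebung} of the cocommutative coalgebra $\cn\cw_{\ell+1}$ --- an operator your proposal never invokes. The paper's proof is one stroke: since $(E_i)$ is a curve (divided power sequence), $\mm{Ver}^{i+1}(E_{p^i})=0$ by \cite[Lemma 2.5.8]{abe80}; since $\mm{Ver}$ is multiplicative and the $E_{p^i}$ with $i\le\ell$ generate, $\mm{Ver}^{\ell+1}$ annihilates the whole augmentation ideal of $\cn\cw_{\ell+1}$; and the duality formula $f(\mm{Ver}(a))=f^p(a)^{1/p}$ converts this into the statement that $\mm{Fr}^{\ell+1}$ annihilates the augmentation ideal of $\cn\cw_{\ell+1}^\circ$. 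This is the height bound, and locality then comes for free: the augmentation ideal of $\cn\cw_{\ell+1}^\circ$ is nil, hence contained in every prime and, being maximal, is the unique prime, so the ring is local. The two assertions of the lemma should therefore not be separated as you do: a correct proof of the height statement subsumes locality, whereas your locality argument cannot be repaired without importing finite generation plus this Verschiebung--Frobenius mechanism. (Your instinct to rely only on the degrees of the generators is compatible with the paper's route --- homogeneity of degree $\le p^\ell$ places the $E_{p^i}$ in the $p^\ell$-th term of the coradical filtration, which also forces $\mm{Ver}^{\ell+1}$ to kill them --- but that substitutes only for the citation to Abe, not for the duality step itself.)
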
 \begin{proof}Let $\mm{Ver}$ stand for the Verschiebung morphism of the cocommutative coalgebra $\cn\cw_{\ell+1}$ \cite[Ch. 2, Section 5.3, 112ff]{abe80}. Since $(E_i)_{i=1}^\infty$ is a curve in $\cz$, then 
\[\mathrm{Ver}^{i+1}(E_{p^i})=0,\]
see \cite[Lemma 2.5.8]{abe80}. 
Hence, $\mm{Ver}^{\ell+1}(\cn\cw_{\ell+1})=k$. From the canonical formula 
\[ f(\mm{Ver}(a)) =  f^p( a)^{1/p},\qquad \text{for each $f\in \cn\cw_{\ell+1}^*$ and $a\in\cn\cw_{\ell+1}$},
\]
we conclude that the augmentation ideal $(k1)^\perp\subset\cn\cw_{\ell+1}^\circ$ is annihilated by $\mm{Fr}^{\ell+1}$, which implies that $\cn\cw^\circ_{\ell+1}$ is local of height at most $\ell+1$.  
\end{proof}

The curve $(E_i)_{i=1}^\infty$ is not the unique one enjoying the properties of Theorem \ref{minimal_curve}; on the other hand, given any minimal curve $(E_i')_{i=1}^\infty$ satisfying (a) and (b) of Theorem \ref{minimal_curve}, the following can be said. 
If  \[\ph:\cz\aro k\{E_i'\,:\,i\ge1\}\]
is defined by  $\ph(Z_i)=E_i'$, then \[k\{E_i\,:\,i\ge1\}\arou\ph k\{E_i'\,:\,i\ge1\}\]
is an isomorphism. See \cite[Lemma 3.1.2, p. 51]{ditters69}. 
  
\begin{rmk}\label{25.04.2025--1}In \cite[3.1.3, p.52]{ditters69}, the Hopf algebra $k\{E_i,i\ge1\}$ is called the ``non-commutative exponential''. In \cite{newman74}, what we called $\cc{NW}_{\ell+1}$ is called    $\g P_\ell$. We have adopted the shifted index in order to     accord  with the notation for Witt vectors. 
\end{rmk}

\subsection{Relation to  the Witt group scheme  }\label{26.02.2025--2}

Let $\ell$ be a positive  integer. 
In order to highlight the importance of the Hopf algebra  $\mathcal{NW}_\ell$ and the group scheme $\spc\,\mathcal{NW}_{\ell}^\circ$, we relate it to the ``usual  Witt Hopf algebra.'' This was already remarked by both Ditters and Newman in a different setting. In order to provide reliable arguments, we require  a certain number of ``well-known'' results on  the duality theory of Hopf algebras, which are  explained in Section \ref{public_utility1}.

Let $\cw_\ell:=\mathcal{NW}^{\mm{ab}}_{\ell}$ be the cocommutative and commutative Hopf algebra constructed in Section \ref{public_utility1}. We shall explain its relation to the   Hopf algebra of  the Witt group scheme $\bm W_{\ell}$  \cite[V.1.6, p.543]{demazure-gabriel70}. More precisely,   let $S_j\in \FF_p[x_0,\ldots,x_j\,;\,y_0,\ldots,y_j]$ stand for the additive Witt polynomials \cite[V.1.4, p.541-2]{demazure-gabriel70} and let $\co(\bm W_{\ell}):=k[x_0,\ldots,x_{\ell-1}]$ be given the structure of Hopf algebra coming from the Witt   group scheme $\bm W_{\ell}$ of dimension $\ell$ over $k$, i.e.  comultiplication is 
\[X_{j}\longmapsto S_j(x_0 \ot1,\ldots,x_j\ot1\,;\,1\ot x_0,\ldots,1\ot x_j).
\]Then, either from the work of Dieudonn\'e  \cite[3.1.5-8]{ditters69} or from the appendix of \cite{newman74}, it follows that \[\co(\bm W_{\ell})\simeq\mathcal{W}_{\ell}.\] (We note that in \cite[Appendix]{newman74}, Newman does not follow the standard convention concerning Witt vectors.)
It is important  to  analyse not only  $\mathcal  W_{\ell}$ but also the Hopf dual $\mathcal  W_{\ell}^\circ$, since the pro-finite group scheme  $\bm{N\!W}_{\ell}$ will have $\spc \mathcal  W_{\ell}^\circ$ as its largest {\it abelian} quotient (see Lemma \ref{public_utility2}). The structure of the group scheme 
$\spc\co(\bm W_{\ell})^\circ$ should be known, but we find it hard to give a clear reference. 

Let us begin by employing 
\cite[Theorem 9.5]{waterhouse79} to obtain an isomorphism 
\[
\spc\co(\bm W_{\ell})^\circ\simeq \mm{Diag}(\La)\ti U,
\]
 where  $\La$ is an abelian group and $U$ is an  unipotent group scheme. Clearly $\La$ is just the group of characters of $\spc\co(\bm W_{\ell})^\circ$, which is the group of group-like elements of   $\co(\bm W_{\ell})^\circ$. From \cite[p. 129]{abe80}, $\La$  is isomorphic to  the group of $k$-points of $\bm W_{\ell}$, i.e.  $\La\simeq  \bm W_{\ell}(k)$.

Employing   Lemma \ref{public_utility2}, we conclude that $\co(U)$ is isomorphic to the Hopf subalgebra $\co(\bm W_{\ell})^\dagger$ of $\co(\bm W_{\ell})^\circ$. 
Now, we make use of the fact that we are in positive characteristic; letting $\g A_{\ell}$ stand for the augmentation ideal of $\co(\bm W_{\ell})$, it follows that \[\co(\bm W_{\ell})^\dagger=\lid_h\left(\co(\bm W_{\ell})/\g A_{\ell}^{[p^h]}\right)^*.
\] 
The determination of Hopf algebra $\left(\co(\bm W_{\ell})/\g A_{\ell}^{[p^h]}\right)^*$ is well-known as it enters the context of Cartier duality of finite group schemes. Indeed, let 
$\bm I_h$ stand for functor associating to a group scheme its $h$th Frobenius kernel; we have 
$\co(
\bm I_h\bm W_{\ell})=\co(\bm W_{\ell})/\g A_{\ell}^{[p^h]}$.
Then   
 \cite[Theorem, p.61]{demazure72} says that there exists a canonical isomorphism 
\[\bm I_h\bm W_{\ell}\simeq \bm I_{\ell}\bm W_h.
\] We then conclude that $U\simeq\lip_h\bm I_{\ell}\bm W_h$, i.e. \[U\simeq\bm I_{\ell}\bm W. \]
In conclusion: 
\begin{prp}\label{23.04.2025--3}The   abelian group scheme $\bm {N\!W}_{\ell}^{\mm{ab}}$ is isomorphic to  $\mm{Diag}(\bm W_{\ell}(k))\times\bm I_{\ell}\bm W$.\qed
\end{prp}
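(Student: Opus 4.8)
The plan is to compute the largest abelian quotient directly on coordinate rings, exploiting the identification $\cw_\ell=\mathcal{NW}_\ell^{\mm{ab}}\simeq\co(\bm W_\ell)$ recorded above, so that by part~(3) of Lemma~\ref{public_utility2} we have $\bm{N\!W}_\ell^{\mm{ab}}=\spc\cw_\ell^\circ\simeq\spc\co(\bm W_\ell)^\circ$. Since $\co(\bm W_\ell)$ is both commutative and cocommutative, $\co(\bm W_\ell)^\circ$ is again a commutative cocommutative Hopf algebra, so $\spc\co(\bm W_\ell)^\circ$ is a commutative affine group scheme over the algebraically closed (hence perfect) field $k$. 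First I would invoke the structure theorem \cite[Theorem 9.5]{waterhouse79} to obtain a splitting $\spc\co(\bm W_\ell)^\circ\simeq\mm{Diag}(\La)\times U$ with $\La$ an abelian group and $U$ unipotent, and then identify the two factors separately.

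For the diagonalizable factor, $\La$ is the character group of $\spc\co(\bm W_\ell)^\circ$, i.e. the group of group-like elements of $\co(\bm W_\ell)^\circ$. By \cite[p.~129]{abe80} these group-likes are precisely the $k$-algebra homomorphisms $\co(\bm W_\ell)\to k$, which is to say the $k$-points of $\bm W_\ell$; hence $\La\simeq\bm W_\ell(k)$.

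For the unipotent factor, part~(4) of Lemma~\ref{public_utility2} gives $\co(U)\simeq\co(\bm W_\ell)^\dagger$. The key step is to rewrite this filtered union in terms of Frobenius kernels. Working in the polynomial ring $\co(\bm W_\ell)=k[x_0,\ldots,x_{\ell-1}]$ with augmentation ideal $\g A_\ell$, one checks that the decreasing filtrations $(\g A_\ell^n)_n$ and $(\g A_\ell^{[p^h]})_h$ are cofinal, so that
\[
\co(\bm W_\ell)^\dagger=\lid_h\left(\co(\bm W_\ell)/\g A_\ell^{[p^h]}\right)^*.
\]
Since $\co(\bm I_h\bm W_\ell)=\co(\bm W_\ell)/\g A_\ell^{[p^h]}$, each transition term is the coordinate ring of the Cartier dual $(\bm I_h\bm W_\ell)^\vee$ of the Frobenius kernel $\bm I_h\bm W_\ell$; combining the identification $\spc\co(\bm I_h\bm W_\ell)^*\simeq(\bm I_h\bm W_\ell)^\vee$ with the index-swapping isomorphism of \cite[Theorem, p.~61]{demazure72} and passing to the limit yields $U\simeq\lip_h\bm I_\ell\bm W_h\simeq\bm I_\ell\bm W$. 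Assembling the two factors produces the claimed isomorphism $\bm{N\!W}_\ell^{\mm{ab}}\simeq\mm{Diag}(\bm W_\ell(k))\times\bm I_\ell\bm W$.

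The main obstacle, and the only point requiring genuine care, is the unipotent factor: one must (i) justify the cofinality of the two filtrations so that the Frobenius-power description of $\co(\bm W_\ell)^\dagger$ is legitimate, and (ii) feed the finite Hopf algebras $\co(\bm I_h\bm W_\ell)^*$ correctly through Cartier duality and the index-swapping theorem, keeping track of the transition maps so that the inverse limit really produces $\bm I_\ell\bm W$ rather than some truncation. By contrast, the diagonalizable factor and the very existence of the product splitting are immediate from the cited structure theory.
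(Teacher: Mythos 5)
Your proof is correct and takes essentially the same route as the paper's own argument: both reduce to computing $\spc\co(\bm W_\ell)^\circ$ via the identification $\mathcal{NW}_\ell^{\mm{ab}}\simeq\co(\bm W_\ell)$ and Lemma~\ref{public_utility2}, split off the diagonalizable factor using \cite[Theorem 9.5]{waterhouse79} together with the group-like computation of \cite[p.~129]{abe80}, and identify the unipotent factor through $\co(\bm W_\ell)^\dagger$, Frobenius kernels, and the index-swapping duality theorem of \cite[Theorem, p.~61]{demazure72}. The two points you flag as needing care (cofinality of the filtrations $(\g A_\ell^n)$ and $(\g A_\ell^{[p^h]})$, and the passage through Cartier duality) are precisely the steps the paper treats, only more tersely.
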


\section{The group   scheme $\Sigma_D$ and    the Hopf algebra $\bm H_D$ when $D$ is local}
\label{14.02.2025--1}
We now regain the setting and notations of Sections \ref{19.02.2025--1} to \ref{17.04.2025--1}, and in addition  suppose that   \[A:=\co(D)\] is a local ring with  maximal ideal $\g m\not=0$.  We find convenient to write $\cs_A$ instead of $\cs_D$ in what follows. In order to gain knowledge about the group scheme  $\Sigma_D$ of Definition \ref{24.02.2025--1}, we shall find an equivalence  \[\cs_A\arou\sim\modules{\bm H_A},\]
for a certain {\it associative algebra  $\bm H_A$},   
introduce on  $\bm H_A$ a   bialgebra structure and then relate it  to the {\it tensor product on $\cs_A$}. Moving on,  we relate   $\bm H_A$ 
and the non-commutative  Witt Hopf-algebras of  Definition \ref{nonCommuattiveWitt} by means of an  encompassing structure result of \cite{newman74}. (Newman's result  is also in the direction of the  ``Campbell-Hausdorff-Dieudonn\'e''  Theorem in \cite[Theorem 3.5.4]{ditters69}.)

\subsection{The bialgebra  $\bm H_A$ and the category $\cs_A$}
\label{21.02.2025--7}Using the decomposition $A=k1\op \g m$, we shall identify  $\g m^*$ with a subspace of  $A^*$ in what follows. 
  
Using the obvious bijections   
\[\begin{split}\mm{Hom}_A(V\ot A,V\ot A)&\stackrel\sim\to\mm{Hom}_k(V,V\ot A)
\\
&\stackrel\sim\to\mm{Hom}_k(A^*,\mm{End}_k(V)),
\end{split}\] 
and the restriction map  $\mm{Hom}_k(A^*,\mm{End}_k(V))\to \mm{Hom}_k(\g m^*,\mm{End}_k(V))$, 
we can associate to each  $(V,\ph)\in\mm{Obj}\,\cs_A$ an element of $\mm{Hom}_k(\g m^*,\mm{End}_k(V))=\mm{Hom}_{\Alg k}(\bm T(\g m^*),\mm{End}_k(V))$, that is, a structure of $\bm T(\g m^*)$-module on $V$. 
Letting $\eta\in A^*$ stand for the canonical map $A\to A/\g m=k$, we observe that  $\ph\in\mm{Hom}_k(A^*,\mm{End}_k(V))$ gives rise to an object of $\cs_A$ if and only if  $\ph(\eta)=\id_V$.  From this, the identity functor of $\vect k$ gives rise to   an equivalence of  
$k$-linear categories 
\begin{equation}\label{17.03.2025--1}
\Om:\cs_A\stackrel \sim\aro\modules {\bm T(\g m^*)}.
\end{equation}
In    down-to-earth terms: Let $\{a_i\}_{i=1}^n$ be a basis of $\g m$ with dual basis  $\{e_i\}_{i=1}^n$. Given  $(V,\ph)\in\mm{Obj}\,\cs_A$,  we  write $\ph\in\mm{Hom}_A(V\ot A,V\ot A)$ as 
$\ph(v\ot1)=v\ot1+\sum_{i=1}^n\ph_i(v)\ot a_i$. Then $\Om(V,\ph)$ is the $\bm T(\g m^*)$-module defined by   the morphism of $k$-algebras 
\[
\bm T(\g m^*)\aro\mm{End}_k(V),\quad e_i\longmapsto\ph_i.
\] 
We now define 
\[\boxed{\bm H_A=\bm T(\g m^*).}\] We are now required to endow $\bm H_A$ with the structure of a cocommutative bialgebra. Let    $\ep:\bm H_A\to k$ be the map of rings defined by $\ep(1)=1$ and   $\ep(\g m^*)=\{0\}$.   Let $\De_0:\g m^*\to\g m^*\ot\g m^*$ be  the transpose of multiplication $\g m\ot\g m\to\g m$ and let   $\De:\bm T(\g m^*)\to\bm T(\g m^*)\ot\bm T(\g m^*)$ be the map of   $k$-algebras associated to the linear map   
\[
\De_0+\id\ot1+1\ot\id:\g m^*\aro\bm T(\g m^*)\ot\bm T(\g m^*).
\]
Explicitly, this says the following.  If $c_{ij}^h$ are the structure constants of multiplication $\g m\ot\g m\to\g m$, i.e.  $a_i a_j=\sum_{h=1}^n
 c_{ij}^h a_h$, then  $\ep(e_i)=0$ and 
\[
\De(e_h)=e_h\ot1+1\ot e_h+\sum_{i,j}c^h_{ij}\,e_i\ot e_j.
\]

We write $A^*=k\po\eta\op\g m^*$ and let  \begin{equation}\label{15.04.2025--1}f:A^*\aro\bm H_A\end{equation} be the linear map  which sends $\eta$ to $1$ and $\g m^*$ identically  to the canonical copy of $\g m^*$ in $\bm T(\g m^*)$. 
If $\de$ is the comultiplication  and $\mm{ev}_1:A^*\to k$ is the co-unit  of $A^*$, then $\mm{ev}_1=\ep f$ and $f\ot f \circ \de=\De\circ f$. These formulas, together with the fact that $\g m^*$ generates $\bm H_A$,  allow us to show that $\De$  and $\ep$ endow $\bm H_A$ with the structure of a coalgebra, and   $f:A^*\to\bm H_A$ is a morphism of such. Since $\ep$ and $\De$ are maps of algebras, {\it $\bm H_A$ is now a cocommutative  bi-algebra}. 

These very natural  bialgebras are unfortunately not  well 
documented in the literature on the theme, except in the theory of the ``non-commutative symmetric functions'', as we see in the following.  
 
\begin{ex}Assume that $A=k[t]/(t^{m+1})$, where $m\ge1$. Letting $a_i=t^i$, so that $\{a_i\}_{i=1}^m$   is a basis of $\g m$, 
it follows that \[c_{ij}^h=\left\{\begin{array}{ll}0,&h\not=i+j,  \\
1,&  h=i+j.
\end{array}\right.\]   Then
$\De e_h=e_h\ot1+1\ot e_h+\sum_{i=1}^{h-1}e_i\ot e_{h-i}$,
and $\bm H_A$ is just  the truncated algebra of non-commutative symmetric functions    $\cz(m)$, cf. Section \ref{26.02.2025--1}. 
\end{ex}

The above constructions are   related to the tensor structure on $\cs_A$. Indeed,  let $(V,\ph)$ and $(W,\ps)$ be objects of $\cs_A$, where $\ph\in\mm{Hom}_k(V,V\ot A)$ and $\ps\in\mm{Hom}_k(W,W\ot A)$. The  tensor product in $\cs_A$,    $(V,\ph)\otimes(W,\ps)=(V\ot W,\ph\boxtimes\ps)$,   is obtained   by fixing  \[\ph\boxtimes\ps\in\mm{Hom}_k(V\ot W,V\ot W\ot A)\] as the composition 
\begin{equation}\label{05.10.2024--1}\xymatrix{
V\ot W\ar[r]^-{\ph\ot\ps}&V\ot A\ot W\ot A\ar@{=}[r]& V\ot W\ot A\ot A\ar[rr]^-{\id\ot\id\ot\mm{mult}}&&V\ot W\ot A}.
\end{equation}In explicit terms, if   $\ph=\id_V\ot1+\sum_i\ph_i\ot a_i$ and $\ps=\id_W\ot1+\sum_i\ps_i\ot a_i$,  then 
\[
\ph\boxtimes\ps=\id_{V\ot W}\ot1+\sum_h\left(\id_V\ot\ps_h+\ph_h\ot\id_W+\sum_{i,j}c_{ij}^h \,\ph_i\ot\ps_j \right)\ot a_h.
\]
Hence,  the  $\bm H_A$-module   $\Om[(V,\ph)\ot(W,\ps)]$ is given by 
the composition 
\[\xymatrix{\bm H_A\ar[r]^-{\De}& \bm H_A\ot \bm H_A\ar[r]&\mm{End}(V)\ot\mm{End}(W)\ar[rr]^-{\text{natural}}&&\mm{End}(V\ot W)}.
\]
It is also clear that, letting $\bb I$ stand for the unit object of $\cs_A$, then  $\Om(\bb I)$  is the  $\bm H_A$-module defined by $\ep:\bm H_A\to k= \mm{End}_k(k)$. This means that if we endow $\modules{\bm H_A}$ with its tensor structure obtained from $\De$ and $\ep$, cf.  \cite[III.5]{kassel95}, then $\Om$ is a tensor functor. 

Consequently, because of the equivalence of tensor categories 
$\modules{\bm H_A}\simeq \comodules{\bm H_A^\circ}$, we conclude that \[\co(\Si_D)\simeq \bm H_A^\circ.\]

\subsection{Pointed irreducible and cocommutative bialgebras}
The bialgebra $\bm H_A$ has already appeared in   \cite{newman74}.
For the sake of the reader with little experience in the theory of coalgebras, let us make a brief introduction to some basic material from \cite{newman74}.

Recall from \cite[p. 80]{abe80} that a coalgebra $L$ is said to be irreducible if its coradical $\mathrm{corad}\,L$ is a simple coalgebra. As  
the Jacobson radical of $A$ is $\g m$, it follows  that  $(\mathrm{corad}\,A^*)^\perp=\g m$ \cite[2.3.9(i), p.84]{abe80}. Consequently, 
 $\mathrm{corad}\,A^*=k\eta$  and   {\it $A^*$ is irreducible}. It is in addition {\it pointed} \cite[p. 80]{abe80}.

Following  \cite{newman74}, we abbreviate ``pointed, irreducible and cocommutative'' to ``PIC'',  so that $A^*$ is a PIC coalgebra. Using the map $f$ of eq. \eqref{15.04.2025--1}, we regard  $A^*$  as a subcoalgebra of  $\bm H_A$ (and the element $\eta\in A^*$ is the unit of  $\bm H_A$). Let $\bm H_A(1)$ be the irreducible component of $\bm H_A$ containing $1$ \cite[p.97]{abe80}: from \cite[Theorem 2.4.5(ii), p.96]{abe80},   $\bm H_A(1)$ is the sum of all irreducible subcoalgebras of $\bm H_A$ containing $1$. In particular, $A^*\subset \bm H_A(1)$ and $\bm H_A(1)$ contains algebra generators of $\bm H_A$. Since 
  $\bm H_A(1)$ is a {\it subalgebra} also \cite[Theorem 2.4.27p.105]{abe80}, it follows that $\bm H_A=\bm H_A(1)$. In summary, $\bm H_A$ is also PIC. 
  A fundamental result in the theory now  says that $\bm H_A$  already {\it comes with an antipode}, see p. 71 and Proposition 9.2.5 of \cite{sweedler69}, and hence is a {\it Hopf algebra}. (Note that no particular property about the field $k$ is employed here.)

 As noted in \cite[Remark, p.4]{newman74},    $\bm H_A$ is in fact the solution to a certain   universal problem in the following sense.   
Let the category of PIC coalgebras, respectively Hopf algebras, be  denoted by $\mathbf{picCog}$, respectively $\mathbf{picHpf}$.

\begin{prp}\label{25.04.2025--2}The forgetful functor $\mathbf{picHpf}\to \mathbf{picCog}$ has a left adjoint $\cf:\mathbf{picCog}\to \mathbf{picHpf}$ and $\mathcal F(A^*)=\bm H_A$. \qed
\end{prp}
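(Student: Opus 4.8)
The plan is to exhibit $\cf$ explicitly as a tensor-algebra construction generalizing the passage from $A^*$ to $\bm H_A$ carried out in Section \ref{21.02.2025--7}, and then to verify the adjunction directly through the universal property of the tensor algebra. Throughout, let $U$ denote the forgetful functor $\mathbf{picHpf}\to\mathbf{picCog}$.

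First I would fix, for a PIC coalgebra $L$, its unique group-like element $g$ (the generator of the one-dimensional coradical); since $\ep_L(g)=1$, one has a direct sum decomposition $L=kg\oplus V$ with $V:=\ker\ep_L$. Writing $\De_L(x)=g\ot x+x\ot g+\ov{\De}_L(x)$ for $x\in V$, the counit axioms force $\ov{\De}_L(x)\in V\ot V$, so that $L$ carries a reduced coproduct $\ov{\De}_L\colon V\to V\ot V$ (for $L=A^*$ one has $g=\eta$, $V=\g m^*$, and $\ov{\De}_L$ is precisely the map $\De_0$ dual to multiplication $\g m\ot\g m\to\g m$). I then set $\cf(L):=\bm T(V)$ and endow it with the comultiplication defined as the algebra map extending $x\mapsto x\ot 1+1\ot x+\ov{\De}_L(x)$ and the counit extending $V\mapsto 0$, exactly as $\bm H_A$ was built from $\g m^*$. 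Specializing to $L=A^*$ then recovers $\cf(A^*)=\bm T(\g m^*)=\bm H_A$ on the nose.

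Second I would check that $\cf(L)$ is an object of $\mathbf{picHpf}$. Coassociativity and the counit identities need only be verified on the generators $V$, both sides being algebra maps, where they reduce to coassociativity of $\De_L$ and the definition of $\ov{\De}_L$; cocommutativity is inherited from $L$ in the same manner. That $\bm T(V)$ is pointed irreducible is seen exactly as for $\bm H_A$: the subspace $kg\oplus V\subseteq\bm T(V)$ is a PIC subcoalgebra containing $1$ together with the algebra generators $V$, so the irreducible component $\bm T(V)(1)$, being a subalgebra by \cite[Theorem 2.4.27, p.105]{abe80} and containing these generators, is all of $\bm T(V)$. Being a PIC bialgebra, $\bm T(V)$ automatically carries an antipode (\cite[Proposition 9.2.5]{sweedler69}) and is therefore a PIC Hopf algebra.

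Third I would produce the adjunction. The unit $\iota_L\colon L\to U\cf(L)$ sends $g$ to $1$ and includes $V$ as the degree-one part of $\bm T(V)$; a one-line computation using the definition of $\De_{\bm T}$ shows $\iota_L$ is a morphism of $\mathbf{picCog}$. Given any PIC Hopf algebra $H$ and any $\ph\colon L\to U(H)$ in $\mathbf{picCog}$ (so that $\ph(g)=1_H$ and $\ph(V)\subseteq\ker\ep_H$), the universal property of the tensor algebra yields a unique algebra map $\tilde\ph\colon\bm T(V)\to H$ extending $\ph|_V$. To see that $\tilde\ph$ is a bialgebra map it suffices to compare the two algebra maps $\bm T(V)\rightrightarrows H\ot H$ given by $\De_H\tilde\ph$ and $(\tilde\ph\ot\tilde\ph)\De_{\bm T}$ on $V$, where equality follows because $\ph$ respects coproducts; since a bialgebra map between Hopf algebras automatically commutes with the antipodes, $\tilde\ph\in\mathbf{picHpf}$. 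Uniqueness is immediate because $V$ generates $\bm T(V)$ as an algebra. This produces a bijection $\hh{\mathbf{picHpf}}{\cf(L)}{H}\cong\hh{\mathbf{picCog}}{L}{U(H)}$, natural in both variables, establishing $\cf\dashv U$. I expect the only genuinely coalgebraic obstacle to be the irreducibility of $\bm T(V)$, everything else being formal from the universal property of $\bm T(V)$; fortunately this is exactly the step already isolated for $\bm H_A$, and the main (modest) point is to observe that that argument is insensitive to the particular $L$ and so applies verbatim to $\cf(L)$ for every PIC coalgebra $L$.
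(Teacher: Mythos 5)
Your proof is correct. A point of comparison: the paper itself offers no argument for this proposition --- it is stated with a closing tombstone and delegated to the remark on p.~4 of \cite{newman74}, the surrounding text having only carried out the special case $L=A^*$ (the construction of $\bm H_A=\bm T(\g m^*)$, its PIC property via the irreducible-component argument, and the automatic antipode from \cite[Proposition 9.2.5]{sweedler69}). What you have done is fill in exactly what that citation hides: you generalize the paper's construction verbatim --- decompose $L=kg\oplus V$ with $V=\ker\ep_L$, form $\bm T(V)$ with the comultiplication generated by $x\mapsto x\ot 1+1\ot x+\ov{\De}_L(x)$ --- and then verify the adjunction through the universal property of the tensor algebra, all coalgebraic identities being checked on algebra generators. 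This is in substance Newman's own construction, so your route is the one the paper implicitly endorses; what it buys is self-containedness and the identification $\cf(A^*)=\bm H_A$ on the nose rather than up to isomorphism. Two cosmetic remarks. First, the subcoalgebra of $\bm T(V)$ you exhibit in the irreducibility step should be written $k1\oplus V$, since the group-like $g$ has been identified with $1\in\bm T(V)$; with that reading your argument is exactly the paper's argument that $\bm H_A=\bm H_A(1)$. Second, for the claimed naturality (hence for $\cf$ to be a functor at all) you should either define $\cf$ on morphisms --- a coalgebra map $f:L\to L'$ satisfies $f(V)\subseteq V'$ because $\ep_{L'}\circ f=\ep_L$, and the induced algebra map $\bm T(V)\to\bm T(V')$ is a morphism of $\mathbf{picHpf}$ by the same check-on-generators device --- or invoke the standard fact that a pointwise universal arrow $\iota_L:L\to U\cf(L)$ extends uniquely to a left adjoint \cite[IV.1, Theorem 2]{maclane98}; either patch is immediate.
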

(We note that Newman uses ``$F$'' instead of ``$\cf$''; we made the change to avoid confusion with the Frobenius morphisms.)
The above characterisation of $\bm H_A$ allowed Newman to prove a very useful structure result.

\subsection{The structure of $\bm H_A$: Newman's Theorem  }\label{07.05.2025--1}
We shall require the theory of the Verschiebung morphism of cocommutative coalgebras 
\cite[Chapter 2, Section 5.3, 112ff]{abe80}. 
Let 
\[h=\min\{i\in\NN\,:\,\mathrm{Fr}^i(\g m)=0\}.\] (By assumption $h\ge1$.)
From   the duality equation for 
$\mm{Ver} :A^*\to A^*$,   
\[ \la(\mm{Ver} (a)) =  \la^p( a)^{1/p},\qquad \text{for each $\la\in (A^*)^*$ and $a\in A^*$},
\]   we have   \[\left(  \mathrm{Fr}^i(A^{**})\right)^\perp=\mm{Ker}\,\mm{Ver}^i.\] (This also holds for the cocommutative algebra $\bm H_A$.)
This gives rise to   a filtration 
\begin{equation}\label{14.03.2025--1}
0=K_0 \subset \underbrace{\mm{Ker}\,\mm{Ver}}_{K_1}\subset \cdots\subset\underbrace{\mm{Ker}\,\mm{Ver}^h}_{K_h}=(A^*)^+,
\end{equation}
where we have followed custom  and written $(-)^+$ for the augmentation ideal of a coalgebra.
Because $\bm H_A$ is generated, as a $k$-algebra by   $f[(A^*)^+]$, the equality $\mm{Ver}\circ f=f\circ \mm{Ver}$ holds true,  and    $\mm{Ver}:{\bm H_A}\to {\bm H_A}$ is a morphism of rings,  we 
derive claims (i)--(iii) of the following: 
\begin{lem} \label{24.02.2025--2}(i) The Verschiebung morphism   $\mm{Ver}^h:{\bm H_A}\to{\bm H_A}$ annihilates the augmentation ideal $(\bm H_A)^+$,  (ii) while $\mm{Ver}^{h-1}$ does not. In particular, (iii) $\mm{Fr}^h$ annihilates   $(\bm H_A^\circ)^+=\{\ph\,:\,\ph(1)=0\}$. (iv) The morphism $\mm{Fr}^{h-1}$ does not annihilate   $(\bm H_A^\circ)^+$.
\end{lem}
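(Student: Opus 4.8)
The plan is to treat claims (i)--(iii) as already secured by the discussion preceding the statement, and to concentrate on (iv), which is the only genuinely new point and which I would obtain by dualising (ii). For completeness let me recall why (i)--(iii) hold: claim (i) follows because $\bm H_A$ is generated as a $k$-algebra by $f[(A^*)^+]=f[K_h]$, on which $\mm{Ver}^h$ vanishes (using $\mm{Ver}\circ f=f\circ\mm{Ver}$ and that $\mm{Ver}$ is a ring morphism, so it kills every product of generators); claim (ii) follows from the strict inclusion $K_{h-1}\subsetneq K_h=(A^*)^+$ together with the injectivity of $f$, which yields $x\in(A^*)^+$ with $f(\mm{Ver}^{h-1}(x))\ne0$; and claim (iii) is the dual of (i) through the iterated duality formula $\la(\mm{Ver}^i(a))^{p^i}=\la^{p^i}(a)=\mm{Fr}^i(\la)(a)$, applied to $\la\in(\bm H_A^\circ)^+$, since $\mm{Ver}^h(a)\in k\cdot1$ for every $a$ and $\la(1)=0$.

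For (iv), first I would invoke (ii) to fix $a\in(\bm H_A)^+$ with $b:=\mm{Ver}^{h-1}(a)\ne0$. As $\mm{Ver}$ fixes $1$ and satisfies $\ep\circ\mm{Ver}=\ep$, it preserves the augmentation ideal, so in fact $b\in(\bm H_A)^+$. The aim is then to produce a functional $\ph\in(\bm H_A^\circ)^+$ with $\ph(b)\ne0$; granting this, the same iterated duality formula gives
\[
\mm{Fr}^{h-1}(\ph)(a)=\ph\bigl(\mm{Ver}^{h-1}(a)\bigr)^{p^{h-1}}=\ph(b)^{p^{h-1}}\ne0,
\]
where the last step uses that $k$ is a field and hence has no zero divisors. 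This shows $\mm{Fr}^{h-1}(\ph)\ne0$ with $\ph$ in the augmentation ideal, which is precisely (iv).

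So the crux is to construct a separating functional $\ph$ lying simultaneously in the Hopf dual and in the augmentation ideal. Here I would exploit the connected grading $\bm H_A=\bm T(\g m^*)=\bigoplus_{n\ge0}(\g m^*)^{\ot n}$: the powers $((\bm H_A)^+)^N=\bigoplus_{n\ge N}(\g m^*)^{\ot n}$ are two-sided ideals of finite codimension (because $\g m$ is finite-dimensional over $k$), so every linear functional vanishing on some $((\bm H_A)^+)^N$ factors through a finite-dimensional quotient algebra and therefore lies in $\bm H_A^\circ$ by definition of the finite dual. Decomposing $b=\sum_n b_n$ and choosing $d\ge1$ with $b_d\ne0$, the image of $b$ in $\bm H_A/((\bm H_A)^+)^{d+1}$ is nonzero and concentrated in strictly positive degrees, so I can pick a functional $\psi$ on this quotient that is nonzero on $\bar b$ yet vanishes on the degree-zero line $k\cdot\bar1$; its pullback $\ph$ then satisfies both $\ph(b)\ne0$ and $\ph(1)=0$. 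The main obstacle is exactly this separation step --- checking that the finite dual contains a functional enjoying the two required properties --- whereas the duality formula itself is only a routine iteration of the given identity.
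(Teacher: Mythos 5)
Your proof is correct, but for part (iv) --- the only part the paper actually proves, since (i)--(iii) are, exactly as you assume, derived from the discussion preceding the lemma --- you take a genuinely different route. The paper's argument is a one-liner in the opposite dual direction: restriction of functionals along the coalgebra embedding $f:A^*\to\bm H_A$ gives an \emph{algebra} map $\bm H_A^\circ\to(A^*)^*=A$, which is surjective because, $\bm H_A$ being $\bm T(\g m^*)$, any functional on $k\oplus\g m^*$ extends by zero in higher tensor degrees and lands in the finite dual (it kills the cofinite two-sided ideal $((\bm H_A)^+)^2$). This surjection carries $(\bm H_A^\circ)^+$ onto $\g m$ (since $\ph(1)=\eta(\pi(\ph))$), and since algebra maps commute with $p$-th powers, the vanishing $\ph^{p^{h-1}}=0$ for all $\ph\in(\bm H_A^\circ)^+$ would force $\mm{Fr}^{h-1}(\g m)=0$, contradicting the minimality of $h$. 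You instead dualise (ii): you pick $a$ with $b=\mm{Ver}^{h-1}(a)\neq0$, build a separating functional $\ph\in(\bm H_A^\circ)^+$ with $\ph(b)\neq0$ using the grading and the cofinite powers of the augmentation ideal, and conclude with the iterated $\mm{Ver}$/$\mm{Fr}$ duality formula. Both proofs rest on the same key fact --- the grading of $\bm T(\g m^*)$ supplies enough elements of the finite dual --- but the paper's reduction goes straight back to the definition of $h$ and never uses (ii), whereas yours exhibits (iv) as literally the dual statement of (ii), which is arguably more transparent about where the non-vanishing comes from (at the price of the explicit separation construction). One small imprecision in your write-up: $\ep\circ\mm{Ver}$ is not equal to $\ep$; the duality formula rather gives $\ep(\mm{Ver}(a))=\ep(a)^{1/p}$, but the consequence you actually use, namely that $\mm{Ver}$ preserves the augmentation ideal, is of course still valid.
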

\begin{proof}Only (iv) remains to be checked, and this follows from the fact that the map of algebras $\bm H_A^\circ\to (A^*)^*=A$ is surjective since $\bm H_A=\bm T(\g m^*)$. 
\end{proof}
  Another consequence of the filtration \eqref{14.03.2025--1} is that     $A^*$ comes equipped with a {\it regular basis}  in the sense of \cite[Definition 2.1]{newman74}. We recall what this means.  A basis $\cb$ of $A^*$ is regular when (i) $\mm{Ver}(\cb)\subset\cb\cup\{0\}$, (ii) if $\mm{Ver}(\la)=\mm{Ver}(\la')$, then   either $\la=\la'$ or $\mm{Ver}(\la)=0$ and (iii)   $\cb\setminus(A^*)^+\subset\{\eta\}$. Now, to construct a regular basis, we only need to employ the filtration $\{K_i\}$ and proceed as in the construction of a Jordan basis for a nilpotent linear map. 
    
From Theorem \cite[Theorem 2.16]{newman74} and Proposition \ref{25.04.2025--2}, we can assure that in the category  $\bb{Hpf}$, the Hopf algebra $\bm H_A$ is a coproduct (cf. section \ref{public_utility2}) of non-commutative Witt Hopf algebras, that is,  
\begin{equation}\label{25.02.2025--9}
\bm H_A\simeq   \mathcal{NW}_{\ell_1}\sqcup\cdots\sqcup\mathcal{NW}_{\ell_m}.
\end{equation}
  In addition, 
$\max_i\{\ell_i\}\le h$  and  for $\ell\ge1$, the number of copies of $\cn\cw_{\ell}$ in the above decomposition can be determined in the following way. (The reader is asked to bear in mind the difference between our convention and that of \cite{newman74}, see Remark \ref{25.04.2025--1}.) A   regular sequence in $\cb$ is a sequence  $(b_0,\ldots,b_r)$ of elements in   $\cb$ such that  $\mm{Ver}(b_i)=b_{i-1}$ for $i=1,\ldots,r$, and $\mm{Ver}(b_0)=0$. Then, the number of copies of $\cn\cw_\ell$ in the decomposition \eqref{25.02.2025--9} above is  the number of maximal regular sequences of    $\ell$ {\it elements}.  

\begin{cor}If $h=1$,  then $\bm H_A$ is a coproduct of copies of $\mathcal{NW}_1\simeq k[E_1]$ and hence is the universal enveloping algebra of a free Lie algebra on $\dim \g m$ generators. 
\end{cor}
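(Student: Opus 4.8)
The plan is to feed the hypothesis $h=1$ into Newman's decomposition \eqref{25.02.2025--9} and then recognise the resulting coproduct as a tensor algebra equipped with primitive generators. First I would invoke \eqref{25.02.2025--9}, namely $\bm H_A\simeq\mathcal{NW}_{\ell_1}\sqcup\cdots\sqcup\mathcal{NW}_{\ell_m}$ with $\max_i\ell_i\le h$. Since $h=1$ and every $\ell_i\ge1$, each $\ell_i$ equals $1$, so $\bm H_A$ is a coproduct of copies of $\mathcal{NW}_1$. To pin down the number of copies I would use the combinatorial rule stated just before the corollary: the multiplicity of $\mathcal{NW}_\ell$ is the number of maximal regular sequences of $\ell$ elements in a regular basis $\cb$ of $A^*$. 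When $h=1$ the filtration \eqref{14.03.2025--1} collapses to $0\subset K_1=(A^*)^+$, so $\mm{Ver}$ annihilates all of $(A^*)^+$; consequently every element of $\cb\cap(A^*)^+$ satisfies $\mm{Ver}(b)=0$ and begins a maximal regular sequence of a single element, while no longer sequence can occur (there is no $b'$ with $\mm{Ver}(b')=b\ne0$). Since $\cb\setminus(A^*)^+\subset\{\eta\}$, the count is $\dim(A^*)^+=\dim\g m^*=\dim\g m$. Hence $\bm H_A\simeq\mathcal{NW}_1^{\sqcup\dim\g m}$, and $\mathcal{NW}_1=k\{E_1\}=k[E_1]$ by Definition \ref{nonCommuattiveWitt}, with $E_1=Z_1$ primitive by Theorem \ref{minimal_curve}(b).

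Next I would translate the coproduct into the language of enveloping algebras. The polynomial Hopf algebra $k[E_1]$ with $E_1$ primitive is precisely $U(\g l_1)$, the universal enveloping algebra of the one-dimensional (hence automatically free) Lie algebra $\g l_1=kE_1$. By the explicit construction recalled in Section \ref{public_utility2}, the coproduct in $\bb{Hpf}$ of the factors $k[E_1]$ is, as an associative algebra, the coproduct of free algebras, i.e. the tensor algebra $\bm T(k^{\dim\g m})=k\{x_1,\ldots,x_{\dim\g m}\}$, each generator $x_i$ inheriting primitivity from the structural inclusions $i,j$ of Section \ref{public_utility2}. I would then appeal to the classical identification, valid over any field, of the tensor Hopf algebra $\bm T(V)$ with all of $V$ primitive with the universal enveloping algebra $U(\mathcal{L}(V))$ of the free Lie algebra $\mathcal{L}(V)$ on $V$. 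Taking $V=k^{\dim\g m}$ yields $\bm H_A\simeq U(\mathcal{L}(k^{\dim\g m}))$, the enveloping algebra of the free Lie algebra on $\dim\g m$ generators, which is the assertion. This is moreover consistent with the boxed identity $\bm H_A=\bm T(\g m^*)$ at the level of algebras.

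The hard part is the passage from the coproduct of copies of $k[E_1]$ to $U$ of a free Lie algebra. In positive characteristic $U$ fails to be left adjoint to the primitive-elements functor (the $p$-th power of a primitive is again primitive), so one cannot simply assert that $U$ commutes with coproducts and conclude $\bigsqcup U(\g l_1)=U(\g l_1\ast\cdots\ast\g l_1)$. The device that sidesteps this is exactly the identification invoked above: instead of arguing functorially I would work directly with the tensor algebra and the characteristic-free fact $\bm T(V)\simeq U(\mathcal{L}(V))$, which already carries the correct Hopf structure. The only points demanding genuine care are therefore (i) verifying that the algebra generators $x_i$ of the coproduct are primitive, which is immediate from the construction of Section \ref{public_utility2} since each $E_1$ is primitive, and (ii) the bookkeeping in the multiplicity count of the first paragraph.
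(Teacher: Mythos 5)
Your proposal is correct and follows essentially the same route as the paper: the paper's one-line proof likewise observes that when $h=1$ the Verschiebung annihilates every $e_i$, so $(e_1),\ldots,(e_n)$ are maximal regular sequences and Newman's decomposition \eqref{25.02.2025--9} yields exactly $\dim\g m$ copies of $\mathcal{NW}_1$; your second paragraph just makes explicit the standard identification of the Hopf coproduct of copies of $k[E_1]$ with $\bm T(k^{\dim\g m})\simeq U(\mathcal{L}(k^{\dim\g m}))$, which the paper leaves implicit in the word ``hence''. One caveat: your parenthetical claim that $U$ fails to be left adjoint to the primitive-elements functor in characteristic $p$ is not accurate --- the adjunction $\mm{Hom}_{\mathbf{Hpf}^{\mathrm{coc}}}(U(\g g),H)\simeq\mm{Hom}_{\mathbf{Lie}}(\g g,P(H))$ holds in any characteristic, and what fails in characteristic $p$ is Friedrichs/Milnor--Moore (the unit and counit of this adjunction being isomorphisms) --- but since you argue directly via the characteristic-free fact $\bm T(V)\simeq U(\mathcal{L}(V))$, this slip does not affect the validity of your proof.
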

\begin{proof}In this case, $\mm{Ver}(e_i)=0$ for all $1\le i\le n$ and $(e_1),\ldots,(e_n)$ are maximal regular sequences. 
\end{proof}

Let us now translate these findings in terms of the group scheme $\Sigma_D$ of Definition \ref{24.02.2025--1}. 
Since the functor    
$(-)^\circ: \bb{Hpf} \to\bb{Hpf}^{\mathrm{opp}}$  
is left-adjoint to    $(-)^\circ: \bb{Hpf}^{\mathrm{opp}} \to\bb{Hpf}$, cf.  the paragraph following \cite[2.3.14]{abe80}, 
we conclude from  general nonsense \cite[V.5]{maclane98} that $(-)^\circ$
sends co-products in $\bb{Hpf}$
to products in $\bb{Hpf}$. 
Hence, since $\co(\Sigma_D)=\boldsymbol H_A^\circ$, we conclude from eq. \eqref{25.02.2025--9} that 
$\co(\Sigma_D)\simeq \mathcal{NW}_{\ell_1}^\circ\sqcap\cdots\sqcap\mathcal{NW}_{\ell_m}^\circ$, 
where we use the notation $\sqcap$ for the product in $\bb{Hpf}$. More significantly, we then have.
\begin{cor}\label{23.04.2025--4} The group scheme  $\Sigma_D\simeq\spc\bm H_A^\circ$ is isomorphic to the amalgamated product \[ 
\bm{N\!W}_{\ell_1}\star\cdots\star\bm{N\!W}_{\ell_m}.
\]
The number of copies of $\bm{N\!W}_\ell$ equals $2\dim K_\ell-\dim K_{\ell+1}-\dim K_{\ell-1}$, where $K_i=\mm{Ker}\,\mm{Ver}^i$. 
\end{cor}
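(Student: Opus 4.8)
The plan is to treat the two assertions separately: first the isomorphism $\Sigma_D\simeq\bm{N\!W}_{\ell_1}\star\cdots\star\bm{N\!W}_{\ell_m}$, and then the formula for the multiplicities. For the isomorphism I would start from the identification already obtained, namely that $\co(\Sigma_D)=\bm H_A^\circ$ is the product $\mathcal{NW}_{\ell_1}^\circ\sqcap\cdots\sqcap\mathcal{NW}_{\ell_m}^\circ$ taken in $\bb{Hpf}$, this being $(-)^\circ$ applied to the coproduct \eqref{25.02.2025--9}. The one point needing justification is that this $\bb{Hpf}$-product of the \emph{commutative} Hopf algebras $\mathcal{NW}_{\ell_i}^\circ$ agrees with their product in the full subcategory of commutative Hopf algebras. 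To see this I would observe that the inclusion of commutative Hopf algebras into $\bb{Hpf}$ admits the abelianization functor $R\mapsto R^{\mm{ab}}$ as a left adjoint: any morphism from a Hopf algebra to a commutative one factors uniquely through the abelianization, and Lemma \ref{public_utility2}(2) guarantees that this factorization is again a morphism of Hopf algebras. Since a right adjoint preserves limits, in particular finite products, the commutative-Hopf product of the $\mathcal{NW}_{\ell_i}^\circ$ is also a product in $\bb{Hpf}$; by uniqueness of products it is isomorphic to $\bm H_A^\circ$.

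Next I would transport this along the contravariant equivalence $\spc$ between commutative Hopf algebras and $\bb{GS}$. This equivalence exchanges products and co-products, so the commutative-Hopf product of the $\mathcal{NW}_{\ell_i}^\circ$ corresponds to the co-product in $\bb{GS}$ of the group schemes $\spc\mathcal{NW}_{\ell_i}^\circ=\bm{N\!W}_{\ell_i}$. As recorded just after the definition of $\star$, the co-product in $\bb{GS}$ is precisely the amalgamated product, whence $\Sigma_D=\spc\bm H_A^\circ\simeq\bm{N\!W}_{\ell_1}\star\cdots\star\bm{N\!W}_{\ell_m}$.

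For the multiplicity formula I would invoke the count recalled after eq. \eqref{25.02.2025--9}: the number of copies of $\cn\cw_\ell$ equals the number of maximal regular sequences of $\ell$ elements in a regular basis $\cb$ of $A^*$. A maximal regular sequence $(b_0,\ldots,b_{\ell-1})$, characterised by $\mm{Ver}(b_0)=0$ and $\mm{Ver}(b_i)=b_{i-1}$, is exactly a Jordan chain of length $\ell$ for $\mm{Ver}$; indeed conditions (i)--(iii) defining a regular basis say precisely that $\cb\setminus\{\eta\}$ is a Jordan basis for the restriction of $\mm{Ver}$ to the augmentation ideal $(A^*)^+$, on which $\mm{Ver}$ is nilpotent by the filtration \eqref{14.03.2025--1}. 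Hence the multiplicity of $\cn\cw_\ell$ is the number of Jordan blocks of size exactly $\ell$ of $\mm{Ver}$ on $(A^*)^+$. Writing $r_\ell=\dim K_\ell-\dim K_{\ell-1}$ for the number of blocks of size $\ge\ell$, with $K_i=\mm{Ker}\,\mm{Ver}^i$, the number of blocks of size exactly $\ell$ is $r_\ell-r_{\ell+1}=2\dim K_\ell-\dim K_{\ell+1}-\dim K_{\ell-1}$, which is the asserted formula.

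The routine ingredients are the two standard facts invoked, namely limit-preservation by a right adjoint and the Jordan-block counting identity. I expect the only genuinely delicate point to be the comparison of the product computed in $\bb{Hpf}$ with the product computed among commutative Hopf algebras: one must set up the adjunction correctly and check that $\spc$ really sends this product to the Tannakian amalgamated product $\star$ and not to some other candidate coproduct. A secondary matter to verify carefully is that $K_i$ denotes the kernel of $\mm{Ver}^i$ on $(A^*)^+$, so that $\mm{Ver}$ is genuinely nilpotent there and the Jordan formula applies verbatim.
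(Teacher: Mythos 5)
Your proof is correct and follows essentially the paper's own route: dualize Newman's decomposition \eqref{25.02.2025--9} via $(-)^\circ$ to obtain the product $\mathcal{NW}_{\ell_1}^\circ\sqcap\cdots\sqcap\mathcal{NW}_{\ell_m}^\circ$ in $\bb{Hpf}$, identify this with the coproduct (amalgamated product) in $\bb{GS}$, and obtain the multiplicities by counting Jordan blocks of $\mm{Ver}$ on $(A^*)^+$, exactly as the paper's regular-basis discussion intends. The only remark worth making is that your adjunction detour --- which, as stated, needs abelianization for arbitrary rather than cocommutative Hopf algebras, slightly beyond what the lemma in Section \ref{public_utility1} records --- can be replaced by the one-line observation that commutative Hopf algebras form a \emph{full} subcategory of $\bb{Hpf}$, so the commutative object $\bm H_A^\circ$, being a product in $\bb{Hpf}$, automatically satisfies the product universal property among commutative Hopf algebras, whence $\spc$ carries it to the coproduct $\star$ in $\bb{GS}$.
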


Let us end with a summary result. For that, we  abandon the assumption made on $D$, but refrain from making explicit the components of $\bm{N\!W}_\ell$ appearing in the decomposition. 

\begin{cor}\label{23.04.2025--5}Let $D$, $Y$, $X$ and  $C$ be as in Section \ref{19.02.2025--1}, and assume that $C$ is reduced while  $\pi(Y)=0$. Then $\pi(X)$ is isomorphic to the amalgamated product of copies of $\wh \ZZ$ and of $\bm {N\!W}_\ell$. 
\end{cor}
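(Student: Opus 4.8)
The plan is to deduce the statement by concatenating the two structural results already proved, namely Corollary \ref{03.03.2025--4} and Corollary \ref{23.04.2025--4}, and then to collapse the resulting iterated amalgamated products into a single one using the associativity of the co-product in $\bb{GS}$. No genuinely new argument is needed; the work is to organise the substitution and to treat the only degenerate case carefully.

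First I would recall the shape of Corollary \ref{03.03.2025--4}. Writing $C=C_1\sqcup\cdots\sqcup C_\ell$ with each $C_i=\mathbf{pt}$, letting $D_i$ be the inverse image of $C_i$ in $D$, and decomposing each $D_i$ into its connected components $D_i=D_{i,1}\sqcup\cdots\sqcup D_{i,m_i}$, the hypothesis $\pi(Y)=0$ yields
\[
\pi(X)\simeq \bigstar_{i=1}^\ell \wh\ZZ^{\star(m_i-1)}\star\Si_{D_{i,1}}\star\cdots\star\Si_{D_{i,m_i}}.
\]
Each $D_{i,j}$ is by construction a finite and connected $k$-scheme, so $A_{i,j}:=\co(D_{i,j})$ is a local ring. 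For those $D_{i,j}$ whose maximal ideal is non-zero, I would invoke Corollary \ref{23.04.2025--4} to obtain an isomorphism $\Si_{D_{i,j}}\simeq \bm{N\!W}_{\ell_1^{(i,j)}}\star\cdots\star\bm{N\!W}_{\ell_{r_{i,j}}^{(i,j)}}$, exhibiting each accessory group scheme $\Si_{D_{i,j}}$ as an amalgamated product of finitely many non-commutative infinitesimal Witt group schemes. In the remaining, degenerate case $A_{i,j}=k$ (a reduced point), one has $\bm H_{A_{i,j}}=\bm T(\g m^*)=k$, whence $\Si_{D_{i,j}}$ is trivial; this contributes an empty amalgamated product and is therefore harmless.

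The final step is to substitute these decompositions into the displayed formula for $\pi(X)$. Since $\star$ is the co-product in $\bb{GS}$ (as recorded just after the Definition following \cite[Corollary 1.12]{deninger-wibmer23}), it is associative up to canonical isomorphism: by the universal property, a co-product of co-products is again the co-product of the underlying collection of factors. Consequently the nested expression flattens into a single amalgamated product whose factors are copies of $\wh\ZZ$ (arising from the terms $\wh\ZZ^{\star(m_i-1)}$) together with copies of the $\bm{N\!W}_\ell$ (arising from the $\Si_{D_{i,j}}$), which is exactly the asserted description. The only point requiring care, and hence the closest thing to an obstacle, is purely bookkeeping: one must confirm that each $D_{i,j}$ is genuinely local before applying Corollary \ref{23.04.2025--4}, which is immediate from the decomposition of the finite scheme $D_i$ into connected components, and one must absorb the reduced-point components into the convention of an empty amalgamated product.
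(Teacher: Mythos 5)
Your proposal is correct and follows exactly the route the paper intends: Corollary \ref{23.04.2025--5} is stated as a summary obtained by substituting the decomposition of each $\Si_{D_{i,j}}$ from Corollary \ref{23.04.2025--4} into the formula of Corollary \ref{03.03.2025--4} and flattening via associativity of the co-product in $\bb{GS}$. Your explicit handling of the degenerate components $D_{i,j}=\mathbf{pt}$ (where $\bm H_{A_{i,j}}=k$ and $\Si_{D_{i,j}}$ is trivial, so the factor drops out) is a point the paper leaves implicit, and it is treated correctly.
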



\end{document}